\numberwithin{equation}{section}
\numberwithin{figure}{section}
\newcommand\qedsymbol{\hbox{$\Box$}}
\newcommand\qed{\relax\ifmmode\Box\else
  {\unskip\nobreak\hfil\penalty50\hskip1em\null\nobreak\hfil\qedsymbol
  \parfillskip=\z@\finalhyphendemerits=0\endgraf}\fi}
\newenvironment{proof}[1][{}]{\par\noindent Proof{#1}. }{\qed}
\newcommand{\bfzero}{{\bf 0}}
\newcommand{\br}{{\}\hspace{-0.07cm}.\hspace{-0.03cm}.\hspace{-0.07cm}\} }}
\newcommand{\Tree}{{\mathsf{T r e e}}}
\newcommand{\Lie}{{\mathsf{Lie}}}
\newcommand{\LLie}{\Lambda^{-1}{\mathsf{Lie}}_{\infty}}
\newcommand{\Com}{{\mathsf{Com}}}
\newcommand{\coLie}{{\mathsf{coLie}}}
\newcommand{\coCom}{{\mathsf{coCom}}}
\newcommand{\Ger}{{\mathsf{Ger}}}
\newcommand{\Bra}{{\mathsf{Braces}}}
\newcommand{\PaCD}{{\mathsf{PaCD}}}
\newcommand{\PaB}{{\mathsf{PaB}}}
\newcommand{\sA}{\mathsf{A}}
\newcommand{\GRT}{{\mathsf{GRT}}}
\newcommand{\Cobar}{{\mathrm{ C o b a r}}}
\newcommand{\Conv}{{\mathrm{Conv}}}
\newcommand{\Ch}{{\mathsf{C h}}}
\renewcommand{\c}{{\circ}}
\newcommand{\MC}{{\mathrm{MC}}}
\newcommand{\mMC}{{\mathfrak{MC}}}
\newcommand{\hotimes}{{\,\hat{\otimes}\,}}
\newcommand{\pb}{\mathrm{pb}}
\newcommand{\VH}{\mathrm{VH}}
\newcommand{\CH}{{\mathrm{CH}}}
\newcommand{\End}{{\mathsf {E n d} }}
\newcommand{\Hom}{{\mathrm{Hom}}}
\newcommand{\DrAssoc}{{\mathrm{DrAssoc}}}
\newcommand{\lab}{{\mathrm{lab}}}
\newcommand{\Act}{{\mathrm{Act}}}
\newcommand{\Id}{{\mathrm{Id}}}
\newcommand{\Harr}{{\mathrm{Harr}}}
\newcommand{\res}{{\mathrm{res}}}
\newcommand{\corr}{{\mathrm{corr}}}
\newcommand{\Cyl}{{\mathrm{Cyl}}}
\newcommand{\Gr}{{\mathrm {Gr}}}
\newcommand{\Der}{{\mathrm {Der}}}
\newcommand{\id}{{\mathsf{id} }}
\newcommand{\Sh}{{\mathrm {S h} }}
\newcommand{\wt}[1]{{\widetilde{#1}}}
\newcommand{\ck}[1]{{\check{#1}}}
\newcommand{\und}[1]{{\underline{#1}}}
\newcommand{\Cbu}{C^{\bullet}}
\newcommand{\al}{{\alpha}}
\newcommand{\bul}{{\bullet}}
\newcommand{\mB}{\mathfrak{B}}
\newcommand{\mH}{{\mathfrak{H}}}
\newcommand{\mT}{{\mathfrak{T}}}
\newcommand{\md}{{\mathfrak{d}}}
\newcommand{\mg}{{\mathfrak{g}}}
\newcommand{\mt}{{\mathfrak{t}}}
\newcommand{\grt}{{\mathfrak{grt}}}
\newcommand{\lie}{{\mathfrak{lie}}}
\newcommand{\Om}{{\Omega}}
\newcommand{\Omb}{{\Omega}^{\bullet}}
\newcommand{\si}{{\sigma}}
\newcommand{\ga}{{\gamma}}
\newcommand{\ve}{{\varepsilon}}
\newcommand{\ka}{{\kappa}}
\newcommand{\cF}{{\cal F}}
\newcommand{\pa}{{\partial}}
\newcommand{\bsi}{{\bf s}^{-1}\,}
\newcommand{\bs}{{\mathbf{s}}}
\newcommand{\bt}{{\mathbf{t}}}
\newcommand{\cC}{{\mathcal C}}
\newcommand{\cM}{{\mathcal M}}
\newcommand{\cD}{{\mathcal{D}}}
\newcommand{\cA}{{\mathcal{A}}}
\newcommand{\cG}{{\mathcal{G}}}
\newcommand{\cT}{{\mathcal{T}}}
\newcommand{\bbK}{{\mathbb K}}
\newcommand{\bbZ}{{\mathbb Z}}
\newcommand{\bbQ}{{\mathbb Q}}
\newcommand{\La}{{\Lambda}}
\newcommand{\te}{\theta}
\newcommand{\D}{{\Delta}}
\newcommand{\sfDel}{{\mathsf{\Delta}}}
\newcommand{\sgn}{{\mathrm {s g n}}}
\DeclareMathOperator{\susp}{\bf{s}}
\date{}
\newtheorem{thm}{Theorem}[section]
\newtheorem{lemma}[thm]{Lemma}
\newtheorem{cor}[thm]{Corollary}
\newtheorem{prop}[thm]{Proposition}
\newtheorem{cond}[thm]{Condition}
\newtheorem{pty}[thm]{Property}
\newtheorem{remark}[thm]{Remark}
\title{Tamarkin's construction is equivariant with respect to the action of the Grothendieck-Teichmueller group}
\author{Vasily Dolgushev and Brian Paljug}
\date{}
\begin{document}

\large

\maketitle

\begin{abstract}
Recall that Tamarkin's construction \cite{Hinich}, \cite{Dima-proof} gives us a map from the 
set of Drinfeld associators to the set of homotopy classes of $L_{\infty}$ quasi-isomorphisms 
for Hochschild cochains of a polynomial algebra. Due to results of V. Drinfeld \cite{Drinfeld} 
and T. Willwacher \cite{Thomas} both the source and the target of this map are equipped 
with natural actions of the Grothendieck-Teichmueller group $\GRT_1$.  
In this paper, we use the result from \cite{Action} to prove that 
this map from the set of Drinfeld associators to the set of 
homotopy classes of $L_{\infty}$ quasi-isomorphisms for Hochschild cochains
is $\GRT_1$-equivariant.  
\end{abstract}

\tableofcontents

\section{Introduction}
Let $\bbK$ be a field of characteristic zero, $A = \bbK[x^1, x^2, \dots, x^d]$ be the 
algebra of functions on the affine space $\bbK^d$, and $V_A$ be the algebra of 
polyvector fields on $\bbK^d$.  Let us recall that Tamarkin's construction \cite{Hinich}, 
\cite{Dima-proof} gives us a map from the set of Drinfeld associators to the set of  
homotopy classes of $L_{\infty}$ quasi-isomorphisms from $V_A$ to the 
Hochschild cochain complex $\Cbu(A) : = \Cbu(A,A)$ of $A$.  

In paper \cite{Thomas}, among proving many other things, Thomas Willwacher 
constructed a natural action of the Grothendieck-Teichmueller group $\GRT_1$ 
from \cite{Drinfeld} on the set of homotopy classes of $L_{\infty}$ quasi-isomorphisms 
from $V_A$ to $\Cbu(A)$. On the other hand, it is known \cite{Drinfeld} that the group $\GRT_1$ 
acts simply transitively on the set of Drinfeld associators. 

The goal of this paper is to prove $\GRT_1$-equivariance of the map
resulting from Tamarkin's construction using Theorem 4.3 from \cite{Action}. 
We should remark that the statement about $\GRT_1$-equivariance of Tamarkin's 
construction was made in \cite{Thomas} (see the last sentence of Section 10.2 in 
\cite[Version 3]{Thomas}) in which the author stated that ``it is easy to see''.
The modest goal of this paper is to convince the reader that this statement can 
indeed be proved easily. However, the proof requires an additional tool developed 
in \cite{Action}. 

In this paper, we also prove various statements related to Tamarkin's  
construction \cite{Hinich}, \cite{Dima-proof} which are ``known to specialists'' 
but not proved in the literature in the desired generality.
In fact, even the formulation of the problem of $\GRT_1$-equivariance
of Tamarkin's construction requires some additional work. 

In this paper, Tamarkin's construction is presented in the slightly 
more general setting of graded affine space versus the particular case 
of the usual affine space.  
Thus, $A$ is always the free (graded) commutative algebra over $\bbK$ in 
variables $x^1, x^2, \dots, x^d$ of (not necessarily zero) degrees 
$t_1, t_2, \dots, t_d$, respectively. Furthermore, $V_A$ denotes the 
Gerstenhaber algebra of polyvector fields on the corresponding 
graded affine space, i.e. 
$$
V_A : = S_A \big( \bs\, \Der_{\bbK}(A) \big)\,, 
$$ 
where $\Der_{\bbK}(A)$ denotes the $A$-module of 
derivations of $A$, $\bs$ is the operator which shifts the degree
up by $1$, and $S_A(M)$ denotes the free (graded) commutative 
algebra on the $A$-module $M$.   
  
The paper is organized as follows. 
In Section \ref{sec:Tam-constr}, we briefly review the main part of 
Tamarkin's construction and prove that it gives us a map $\mT$ (see Eq. \eqref{mT})
from the set of homotopy classes of certain quasi-isomorphisms 
of dg operads to the set of homotopy classes of $L_{\infty}$ quasi-isomorphisms 
for Hochschild cochains of $A$.   

In Section \ref{sec:grtact}, we introduce a (prounipotent) group 
which is isomorphic (due to Will\-wa\-cher's theorem \cite[Theorem 1.2]{Thomas}) 
to the prounipotent part $\GRT_1$ of the Grothendieck-Teichm\"uller group  
$\GRT$ introduced in \cite{Drinfeld} by V. Drinfeld. We recall from \cite{Thomas} 
the actions of the group (isomorphic to $\GRT_1$) both 
on the source and the target of the map $\mT$ \eqref{mT}. 
Finally, we prove the main result of this 
paper (see Theorem \ref{thm:main}) which says that Tamarkin's 
map $\mT$ (see Eq. \eqref{mT}) is $\GRT_1$-equivariant.  

In Section \ref{sec:link-to-DrAssoc}, we recall how to use the map $\mT$ 
(see Eq. \eqref{mT} from Sec. \ref{sec:Tam-constr}), a specific solution of 
Deligne's conjecture on the Hochschild complex, and the formality of 
the operad of little discs \cite{Dima-disc} to construct a map 
from the set of Drinfeld associators to the set of homotopy classes of 
$L_{\infty}$ quasi-isomorphisms for Hochschild cochains of $A$. 
Finally, we deduce, from Theorem \ref{thm:main}, $\GRT_1$-equivariance
of the resulting map from the set of Drinfeld associators.
The latter statement (see Corollary \ref{cor:GRT1-equivar} in 
Sec.  \ref{sec:link-to-DrAssoc}) can be deduced from what is written 
in \cite{Thomas} and Theorem \ref{thm:main} given in Section \ref{sec:grtact}. 
However, we decided to add  Section \ref{sec:link-to-DrAssoc} just to make 
the story more complete. 
 
Appendices, at the end of the paper, 
are devoted to proofs of various technical statements 
used in the body of the paper. 

\begin{remark}
\label{rem:admit}
While this paper was in preparation, the 4-th version of preprint 
\cite{Thomas} appeared on arXiv.org. In Remark 10.1 of \cite[Version 4]{Thomas}, 
T. Willwacher gave a sketch of admittedly more economic proof of equivariance 
of Tamarkin's construction with respect to the action of $\GRT_1$. 
\end{remark}

~\\
\noindent
\textbf{Acknowledgements:} We would like to thank Thomas Willwacher 
for useful discussions. We acknowledge the NSF grant DMS-1161867 
for a partial support. Finally, we would like to thank the anonymous referee for 
useful suggestions.

\subsection{Notation and conventions}
The ground field $\bbK$ has characteristic zero.  
For most of algebraic structures considered 
here, the underlying symmetric monoidal category is 
the category $\Ch_{\bbK}$ of unbounded cochain complexes 
of $\bbK$-vector spaces.  We will frequently use the ubiquitous
combination ``dg'' (differential graded) to refer to algebraic objects 
in $\Ch_{\bbK}$\,.  For a cochain complex $V$ we denote 
by $\bs V$ (resp. by $\bs^{-1} V$) the suspension (resp. the 
desuspension) of $V$\,. In other words, 
$$
\big(\bs V\big)^{\bul} = V^{\bul-1}\,,  \qquad
\big(\bs^{-1} V\big)^{\bul} = V^{\bul+1}\,. 
$$
Any $\bbZ$-graded vector space $V$ is tacitly considered 
as the cochain complex with the zero differential. 
For a homogeneous vector $v$ in a cochain complex or 
a graded vector space the notation $|v|$ is reserved for its degree.  

The notation $S_{n}$ is reserved for the symmetric group 
on $n$ letters and  $\Sh_{p_1, \dots, p_k}$ denotes 
the subset of $(p_1, \dots, p_k)$-shuffles 
in $S_n$, i.e.  $\Sh_{p_1, \dots, p_k}$ consists of 
elements $\si \in S_n$, $n= p_1 +p_2 + \dots + p_k$ such that 
$$
\begin{array}{c}
\si(1) <  \si(2) < \dots < \si(p_1),  \\[0.3cm]
\si(p_1+1) <  \si(p_1+2) < \dots < \si(p_1+p_2), \\[0.3cm]
\dots   \\[0.3cm]
\si(n-p_k+1) <  \si(n-p_k+2) < \dots < \si(n)\,.
\end{array}
$$

We tacitly assume the Koszul sign rule. In particular,   
$$
(-1)^{\ve(\si; v_1, \dots, v_m)}
$$ 
will always denote the sign factor corresponding to the permutation $\si \in S_m$ of 
homogeneous vectors $v_1, v_2, \dots, v_m$. Namely, 
\begin{equation}
\label{ve-si-vvv}
(-1)^{\ve(\si; v_1, \dots, v_m)} := \prod_{(i < j)} (-1)^{|v_i | |v_j|}\,,
\end{equation}
where the product is taken over all inversions $(i < j)$ of $\si \in S_m$. 

For a pair $V$, $W$ of $\bbZ$-graded vector spaces we denote by 
$$
\Hom (V,W)
$$
the corresponding inner-hom object in the category of $\bbZ$-graded vector spaces, i.e. 
\begin{equation}
\label{Hom-gr} 
\Hom (V,W) : = \bigoplus_{m} \Hom^m_{\bbK}(V, W)\,,
\end{equation}
where $\Hom^m_{\bbK}(V, W)$ consists of $\bbK$-linear maps $f : V \to W$ such that 
$$
f(V^{\bul}) \subset W^{\bul+m}\,.
$$

For a commutative algebra $B$ and a $B$-module $M$, the notation 
$S_B(M)$ (resp. $\und{S}_B(M)$) is reserved for the symmetric $B$-algebra 
(resp. the truncated symmetric $B$-algebra) on $M$, i.e.
$$
S_B(M) : = B \oplus M \oplus S^2_B(M) \oplus  S^3_B(M) \oplus \dots\,,   
$$  
and 
$$
\und{S}_B(M) : = M \oplus S^2_B(M) \oplus  S^3_B(M) \oplus \dots \,.  
$$

For an $A_{\infty}$-algebra $\cA$, the notation $\Cbu(\cA)$ is reserved for 
the Hochschild cochain complex of $\cA$ with coefficients in $\cA$. 

We denote by $\Com$ (resp. $\Lie$, $\Ger$) the operad governing 
commutative (and associative) algebras without unit 
(resp.  the operad governing Lie algebras, 
Gerstenhaber algebras\footnote{See, for example, Appendix A in \cite{DeligneTw}.} without unit).
Furthermore, we denote by $\coCom$ 
the cooperad which is obtained from $\Com$ by 
taking the linear dual. The coalgebras over $\coCom$ are 
cocommutative (and coassociative) coalgebras without counit. 

The notation $\Cobar$ is reserved for the cobar construction \cite[Section 3.7]{notes}. 

For an operad (resp. a cooperad) $P$ and a cochain complex $V$ we denote by 
$P(V)$ the free $P$-algebra (resp. the cofree\footnote{We tacitly assume that 
all coalgebras are nilpotent.} $P$-coalgebra) generated by $V$: 
\begin{equation}
\label{P-Schur-V}
P(V) : = \bigoplus_{n \ge 0} \Big( P(n) \otimes V^{\otimes \, n} \Big)_{S_n}\,.
\end{equation}
For example,
$$
\Com(V) =  \coCom(V) = \und{S}(V)\,. 
$$

We denote by $\La$ the underlying collection of the 
endomorphism operad 
$$
\End_{\bs\, \bbK}
$$
of the 1-dimensional space $\bs\, \bbK$ placed in degree $1$. 
The $n$-the space of $\La$ is 
$$
\La(n) = \sgn_n \otimes \bs^{1-n}\,,
$$
where $\sgn_n$ denotes the sign representation of the symmetric group $S_n$\,.
Recall that $\La$ is naturally an operad and a cooperad.  

For a (co)operad $P$, we denote 
by $\La P$ the (co)operad which is obtained from 
$P$ by tensoring with $\La$: 
$$
\La P : = \La \otimes P\,.   
$$
It is clear that tensoring with 
$$
\La^{-1} : = \End_{\bs^{-1}\, \bbK}
$$
gives us the inverse of the operation $P \mapsto \La P$\,.

For example, the dg operad $\Cobar(\La\coCom)$ governs 
$L_{\infty}$-algebras and the dg operad 
\begin{equation}
\label{Cobar-La2coCom}
\Cobar(\La^2\coCom)
\end{equation}
governs $\La\Lie_{\infty}$-algebras. 

\subsubsection{$\Ger_{\infty}$-algebras and a basis in $\Ger^{\vee}(n)$}

Let us recall that $\Ger_{\infty}$-algebras  (or homotopy Gerstenhaber 
algebras) are governed by the dg operad 
\begin{equation}
\label{Cobar-Ger-vee}
\Cobar(\Ger^{\vee})\,,
\end{equation}
where $\Ger^{\vee}$ is the cooperad which is obtained by taking 
the linear dual of $\La^{-2} \Ger$. 

For our purposes, it is convenient to introduce the free 
$\La^{-2} \Ger$-algebra $\La^{-2} \Ger (b_1, b_2, \dots, b_n)$ 
in $n$ auxiliary variables $b_1, b_2, \dots, b_n$ of degree $0$ and identify the $n$-th space 
$\La^{-2} \Ger(n)$ of $\La^{-2} \Ger$ with the subspace of 
$\La^{-2} \Ger (b_1, b_2, \dots, b_n)$ spanned by $\La^{-2} \Ger$-monomials
in which each variable $b_j$ appears exactly once. For example, 
$\La^{-2} \Ger(2)$ is spanned by the monomials $b_1 b_2$ and
$\{b_1, b_2\}$ of degrees $2$ and $1$, respectively. 

Let us consider the ordered partitions of the set $\{1, 2, \dots, n\}$ 
\begin{equation}
\label{sp-partition}
\{i_{11}, i_{12}, \dots, i_{1 p_1}\} \sqcup 
\{i_{21}, i_{22}, \dots, i_{2 p_2}\} \sqcup \dots \sqcup \{i_{t1}, i_{t 2}, \dots, i_{t p_t}\}
\end{equation}
satisfying the following properties: 
\begin{itemize}

\item for each $1 \le \beta \le t$ the index $i_{\beta p_{\beta}}$ is 
the biggest among $i_{\beta 1}, \dots, i_{\beta p_{\beta}}$

\item $i_{1 p_1}  <  i_{2 p_2} < \dots <  i_{t p_t}$ (in particular, $i_{t p_t} = n$).

\end{itemize}

It is clear that the monomials 
\begin{equation}
\label{La-2Ger-n-basis}
\{ b_{i_{11}},  \dots, \{ b_{i_{1 (p_1-1)}}, b_{i_{1 p_1}} \br \dots
\{ b_{i_{t1}},  \dots, \{ b_{i_{t (p_t-1)}}, b_{i_{t p_t}} \br
\end{equation}
corresponding to all ordered partitions \eqref{sp-partition} satisfying the above properties
form a basis of the space $\La^{-2} \Ger(n)$\,.

In this paper, we use the notation 
\begin{equation}
\label{dual-basis}
\big( \{ b_{i_{11}},  \dots, \{ b_{i_{1 (p_1-1)}}, b_{i_{1 p_1}} \br \dots
\{ b_{i_{t1}},  \dots, \{ b_{i_{t (p_t-1)}}, b_{i_{t p_t}} \br \big)^*
\end{equation}
for the elements of the dual basis in $\Ger^{\vee}(n) = \big( \La^{-2} \Ger(n) \big)^*$.

\subsubsection{The dg operad $\Bra$} 

In this brief subsection, we recall the dg operad $\Bra$ from \cite[Section 9]{DeligneTw} and 
\cite{K-Soi}\footnote{In paper \cite{K-Soi}, the dg operad $\Bra$ is called the ``minimal operad''.}.

Following \cite{DeligneTw}, we introduce, for every $n \ge 1$, the auxiliary 
set $\cT(n)$. An element of $\cT(n)$ is a planted\footnote{Recall that a {\it planted} tree 
is a rooted tree whose root vertex has valency $1$\,.} planar tree $T$ 
with the following data
\begin{itemize}

\item a partition of the set $V(T)$ of vertices
$$
V(T) = V_{\lab}(T)  \sqcup V_{\nu}(T) \sqcup V_{root}(T)  
$$  
into the singleton $V_{root}(T)$ consisting of the root vertex, 
the set $V_{\lab}(T)$ consisting of $n$ vertices which we call {\it labeled}, and the set $V_{\nu}(T)$
consisting of vertices which we call {\it neutral};

\item a bijection between the set $V_{\lab}(T)$ and the set $\{1,2, \dots, n\}$.

\end{itemize}
We require that each element $T$ of $\cT(n)$ satisfies this condition
\begin{cond}
\label{cond:ge-2-incoming}
Every neutral vertex of $T$ has at least $2$ incoming edges. 
\end{cond}
Elements of $\cT(n)$ are called {\it brace trees}. 

For $n \ge 1$, the vector space $\Bra(n)$ consists of all finite
linear combinations of brace trees in $\cT(n)$. To define a structure of 
a graded vector space on $\Bra(n)$, we declare that each brace 
tree $T \in \cT(n)$ carries degree 
\begin{equation}
\label{deg-T}
|T| = 2 |V_{\nu}(T)| - |E(T)| + 1\,,
\end{equation}
where $|V_{\nu}(T)|$ denotes the total number of neutral vertices 
of $T$ and $|E(T)|$ denotes the total number of edges of $T$. 

Examples of brace trees in $\cT(2)$ (and hence vectors in $\Bra(2)$) are shown on figures 
\ref{fig:Tbrace}, \ref{fig:T21}, \ref{fig:cup}, \ref{fig:cup-opp}.  
\begin{figure}[htp] 
\begin{minipage}[t]{0.5\linewidth}
\centering 
\begin{tikzpicture}[scale=0.5]
\tikzstyle{dec}=[circle, draw, minimum size=5, inner sep=1]
\tikzstyle{nu}=[circle, draw, fill, minimum size=5]
\tikzstyle{root}=[circle, draw, fill, minimum size=0, inner sep=1]
\node[root] (r) at (1, 0) {};
\node [dec] (v1) at (1,1.5) {$1$};
\node [dec] (v2) at (1,3) {$2$};
\draw (r) edge (v1);
\draw (v1) edge (v2);
\end{tikzpicture}
\caption{A brace tree $T \in \cT(2)$} \label{fig:Tbrace}
\end{minipage} 
\begin{minipage}[t]{0.5\linewidth}
\centering 
\begin{tikzpicture}[scale=0.5]
\tikzstyle{dec}=[circle, draw, minimum size=5, inner sep=1]
\tikzstyle{nu}=[circle, draw, fill, minimum size=5]
\tikzstyle{root}=[circle, draw, fill, minimum size=0, inner sep=1]
\node[root] (r) at (1, 0) {};
\node [dec] (v1) at (1,1.5) {$2$};
\node [dec] (v2) at (1,3) {$1$};
\draw (r) edge (v1);
\draw (v1) edge (v2);
\end{tikzpicture}
\caption{A brace tree $T_{21}\in \cT(2)$} \label{fig:T21}
\end{minipage} 
\end{figure}
\begin{figure}[htp]
\begin{minipage}[t]{0.5\linewidth}
\centering 
\begin{tikzpicture}[scale=0.5]
\tikzstyle{dec}=[circle, draw, minimum size=5, inner sep=1]
\tikzstyle{nu}=[circle, draw, fill, minimum size=5]
\tikzstyle{root}=[circle, draw, fill, minimum size=0, inner sep=1]
\node[root] (r) at (1, 0) {};
\node[nu] (v) at (1, 1.5) {};
\node[dec] (v1) at (0,3) {$1$};
\node[dec] (v2) at (2,3) {$2$};
\draw  (r) edge (v);
\draw  (v) edge (v1);
\draw  (v) edge (v2);
\end{tikzpicture}
\caption{A brace tree $T_{\cup}\in \cT(2)$} \label{fig:cup}
\end{minipage} 
~
\begin{minipage}[t]{0.5\linewidth}
\centering 
\begin{tikzpicture}[scale=0.5]
\tikzstyle{dec}=[circle, draw, minimum size=5, inner sep=1]
\tikzstyle{nu}=[circle, draw, fill, minimum size=5]
\tikzstyle{root}=[circle, draw, fill, minimum size=0, inner sep=1]
\node[root] (r) at (1, 0) {};
\node[nu] (v) at (1, 1.5) {};
\node[dec] (v2) at (0,3) {$2$};
\node[dec] (v1) at (2,3) {$1$};
\draw  (r) edge (v);
\draw  (v) edge (v1);
\draw  (v) edge (v2);
\end{tikzpicture}
\caption{A brace tree $T_{\cup^{opp}}\in \cT(2)$} \label{fig:cup-opp}
\end{minipage} 
\end{figure}

According to \eqref{deg-T}, the brace trees $T$ and $T_{21}$
on figures \ref{fig:Tbrace} and \ref{fig:T21}, respectively, carry degree $-1$
and the brace trees $T_{\cup}$, $T_{\cup^{opp}}$ on figures 
\ref{fig:cup}, \ref{fig:cup-opp}, respectively, carry degree $0$. 

Condition \ref{cond:ge-2-incoming} implies that $\cT(1)$ consists of 
exactly one brace tree $T_{\id}$ shown on figure \ref{fig:T-id}. 
\begin{figure}[htp]
\centering
\begin{tikzpicture}[scale=0.5]
\tikzstyle{lab}=[circle, draw, minimum size=5, inner sep=1]
\tikzstyle{n}=[circle, draw, fill, minimum size=5]
\tikzstyle{root}=[circle, draw, fill, minimum size=0, inner sep=1]
\node[root] (r) at (0, 0) {};
\node [lab] (v1) at (0,1.5) {$1$};
\draw (r) edge (v1);
\end{tikzpicture}
\caption{The brace tree $T_{\id}\in \cT(1)$} \label{fig:T-id}
\end{figure} 
Hence we have $\Bra(1) = \bbK.$

Finally, we set $\Bra(0) = \bfzero$. 

For the definition of the operadic multiplications on $\Bra$, 
we refer the reader to\footnote{Strictly speaking $\Bra$ is a suboperad 
of the dg operad defined in \cite[Section 8]{DeligneTw}.}  \cite[Section 8]{DeligneTw} and, in particular, 
Example 8.2. For the definition of the differential on $\Bra$,  
we refer the reader to \cite[Section 8.1]{DeligneTw} and, in particular, 
Example 8.4.   

Let us also recall that the dg operad $\Bra$ acts naturally 
on the Hochschild cochain complex $\Cbu(\cA)$ of any $A_{\infty}$-algebra $\cA$. 
For example, if $T$ (resp. $T_{21}$) is the brace tree shown on 
figure \ref{fig:Tbrace} (resp. figure \ref{fig:T21}), then the expression 
$$
T(P_1, P_2)  + T_{21}(P_1, P_2) \,, \qquad P_1, P_2 \in  \Cbu(\cA)
$$   
coincides (up to a sign factor) with the Gerstenhaber bracket of 
$P_1$ and $P_2$. Similarly, if $T_{\cup}$ is the brace tree shown on figure \ref{fig:cup}, 
then the expression 
$$
T_{\cup}(P_1, P_2)\,, \qquad P_1, P_2 \in  \Cbu(\cA)
$$ 
coincides  (up to a sign factor) with the cup product of $P_1$ and $P_2$. 

For the precise construction of the action of $\Bra$ on  $\Cbu(\cA)$, 
we refer the reader to \cite[Appendix B]{DeligneTw}.

\section{Tamarkin's construction in a nutshell}
\label{sec:Tam-constr}

Various solutions of Deligne's conjecture on the Hochschild cochain complex 
\cite{Bat-Markl}, \cite{BF}, \cite{Swiss}, \cite{K-Soi}, \cite{M-Smith},  \cite{Dima-dg}, 
\cite{Vor} imply that the dg operad $\Bra$ is quasi-isomorphic to the dg operad 
$$
C_{-\bul}(E_2, \bbK)
$$
of singular chains for the little disc operad $E_2$\,.

Combining this statement with the formality \cite{K-mot}, \cite{Dima-disc}
for the dg operad $C_{-\bul}(E_2, \bbK)$, we conclude that the dg operad 
$\Bra$ is quasi-isomorphic to the operad $\Ger$. Hence there exists a 
quasi-isomorphism of dg operads
\begin{equation}
\label{Psi}
\Psi : \Ger_{\infty} \to \Bra
\end{equation}
for which the vector\footnote{Here, we use basis \eqref{dual-basis} in $\Ger^{\vee}(n)$.}   
$\Psi(\bs (b_1 b_2)^*)$ is cohomologous to the sum 
$T + T_{21}$ and the vector $\Psi(\bs \{ b_1, b_2 \}^*)$ is cohomologous to 
$$
\frac{1}{2} (T_{\cup} + T_{\cup^{opp}})\,,
$$
where $T$ (resp. $T_{21}$, $T_{\cup}$, $T_{\cup^{opp}}$) is the brace tree depicted 
on figure \ref{fig:Tbrace} (resp. figure \ref{fig:T21}, \ref{fig:cup}, \ref{fig:cup-opp}).

Replacing $\Psi$ by a homotopy equivalent map we may assume, 
without loss of generality, that 
\begin{equation}
\label{Psi-conditions}
\Psi(\bs (b_1 b_2)^*) =  T + T_{21}\,, 
\qquad 
\Psi(\bs \{ b_1, b_2 \}^*) = \frac{1}{2} (T_{\cup} + T_{\cup^{opp}})\,.
\end{equation} 
So from now on we will assume that the map $\Psi$ \eqref{Psi}
satisfies conditions \eqref{Psi-conditions}. 

Since the dg operad $\Bra$ acts on  the Hochschild cochain complex $\Cbu(\cA)$
of an $A_{\infty}$-algebra $\cA$, the map $\Psi$ equips the Hochschild cochain complex $\Cbu(\cA)$ 
with a structure of a $\Ger_{\infty}$-algebra. We will call it {\it Tamarkin's $\Ger_{\infty}$-structure}
and denote by 
$$
\Cbu(\cA)^{\Psi}
$$
the Hochschild cochain complex of $\cA$ with the $\Ger_{\infty}$-structure coming from $\Psi$.   

The choice of the homotopy class of $\Psi$ \eqref{Psi} (and hence the choice of 
Tamarkin's $\Ger_{\infty}$-structure) is far from unique. In fact, it follows from \cite[Theorem 1.2]{Thomas} 
that, the set of homotopy classes of maps \eqref{Psi} satisfying conditions \eqref{Psi-conditions} form 
a torsor for an infinite dimensional pro-algebraic group. 

A simple degree bookkeeping in $\Bra$ shows that for 
every $n \ge 3$
\begin{equation}
\label{Lie-infty-OK}
\Psi(\bs (b_1 b_2 \dots b_n)^*) = 0\,. 
\end{equation}
Combining this observation with \eqref{Psi-conditions} we see that 
any Tamarkin's $\Ger_{\infty}$-structure on $\Cbu(\cA)$ satisfies the 
following remarkable property: 
\begin{pty}
\label{P:L-infty-OK}
The $\La\Lie_{\infty}$ part of Tamarkin's $\Ger_{\infty}$-structure 
on $\Cbu(\cA)$ coincides with the $\La\Lie$-structure given by 
the Gerstenhaber bracket on $\Cbu(\cA)$. 
\end{pty}

From now on, we only consider the case when $\cA = A$, i.e. 
the free (graded) commutative algebra over $\bbK$ in 
variables $x^1, x^2, \dots, x^d$ of (not necessarily zero) degrees 
$t_1, t_2, \dots, t_d$, respectively. Furthermore, $V_A$ denotes the 
Gerstenhaber algebra of polyvector fields on the corresponding 
graded affine space, i.e. 
$$
V_A : = S_A \big( \bs\, \Der_{\bbK}(A) \big)\,.
$$ 

It is known\footnote{Paper \cite{HKR} treats only the case 
of usual (not graded) affine algebras. However, the proof of 
\cite{HKR} can be generalized to the graded setting in a straightforward manner.} 
\cite{HKR} that the canonical embedding 
\begin{equation}
\label{HKR}
V_A \hookrightarrow \Cbu(A)
\end{equation}
is a quasi-isomorphism of cochain complexes, where $V_A$ is considered 
with the zero differential. In this paper, we refer to \eqref{HKR} as 
the {\it Hochschild-Kostant-Rosenberg embedding}.  

Let us now consider the $\Ger_{\infty}$-algebra $\Cbu(A)^{\Psi}$ for a 
chosen map $\Psi$ \eqref{Psi}. By the first claim of Corollary \ref{cor:exists!} from Appendix \ref{app:rigid}, 
there exists a $\Ger_{\infty}$-quasi-isomorphism
\begin{equation}
\label{U-Ger}
U_{\Ger} : V_A \leadsto \Cbu(A)^{\Psi}
\end{equation} 
whose linear term coincides with the Hochschild-Kostant-Rosenberg embedding. 

Restricting $U_{\Ger}$ to the $\La^2\coCom$-coalgebra 
$$
\La^2 \coCom(V_A)
$$
and taking into account Property \ref{P:L-infty-OK} we get a $\La\Lie_{\infty}$-quasi-isomorphism 
\begin{equation}
\label{U-Lie}
U_{\Lie} :  V_A  \leadsto \Cbu(A)
\end{equation}
of (dg) $\La\Lie$-algebras.  

Thus we deduced the main statement of Tamarkin's construction 
\cite{Dima-proof} which can be summarized as
\begin{thm}[D. Tamarkin, \cite{Dima-proof}]
\label{thm:Tamarkin}
Let $A$ (resp. $V_A$) be the algebra of functions 
(resp. the algebra of polyvector fields) on a graded 
affine space. Let us consider the Hochschild cochain 
complex $\Cbu(A)$ with the standard $\La\Lie$-algebra 
structure. Then, for every map of dg operads $\Psi$ \eqref{Psi}, 
there exists a $\La\Lie_{\infty}$ quasi-isomorphism
\begin{equation}
\label{U-Lie-thm}
U_{\Lie} : V_A \leadsto \Cbu(A)
\end{equation}
which can be extended to a $\Ger_{\infty}$ quasi-isomorphism 
$$
U_{\Ger} : V_A \leadsto \Cbu(A)^{\Psi}
$$
where $V_A$ carries the standard Gerstenhaber algebra structure. \qed
\end{thm}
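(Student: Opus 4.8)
The plan is to assemble the theorem from the three ingredients already in place: the Hochschild--Kostant--Rosenberg quasi-isomorphism \eqref{HKR}, the existence of a $\Ger_{\infty}$ quasi-isomorphism lifting it (the rigidity Corollary~\ref{cor:exists!} of Appendix~\ref{app:rigid}), and Property~\ref{P:L-infty-OK}, which controls the Lie part. First I would recall that the chosen map $\Psi$ \eqref{Psi}, satisfying \eqref{Psi-conditions}, turns $\Cbu(A)$ into the $\Ger_{\infty}$-algebra $\Cbu(A)^{\Psi}$ through the action of $\Bra$, while $V_A$ carries its strict Gerstenhaber structure. The target of the theorem is then a single $\Ger_{\infty}$ quasi-isomorphism $U_{\Ger} \colon V_A \leadsto \Cbu(A)^{\Psi}$ from which $U_{\Lie}$ will be extracted by restriction.

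The first and main step is producing $U_{\Ger}$. I would build it as a morphism of cofree $\Ger^{\vee}$-coalgebras intertwining the two codifferentials, constructed inductively on arity with linear term prescribed to be \eqref{HKR}. At arity $n$ the $\Ger_{\infty}$-morphism equation takes the form $d\,U_n = (\text{an expression in the } U_m \text{ with } m<n)$, whose right-hand side is automatically a cocycle in the deformation complex computing $\Ger_{\infty}$-maps out of $V_A$; the obstruction to solving for $U_n$ is its class in the corresponding cohomology. The hard part is showing these obstructions vanish, which amounts to the intrinsic formality (rigidity) of the Gerstenhaber algebra $V_A$ of polyvector fields. This is precisely the content I would import from Corollary~\ref{cor:exists!}, which also guarantees that the linear term can be chosen to be the HKR embedding. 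Since the linear term is then a quasi-isomorphism of underlying complexes, $U_{\Ger}$ is automatically a $\Ger_{\infty}$ quasi-isomorphism.

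Finally I would extract $U_{\Lie}$. Along the natural morphism of cooperads $\La^2\coCom \to \Ger^{\vee}$ (through which the $\La\Lie_{\infty}$-structure underlying any $\Ger_{\infty}$-structure is recovered), the cofree $\La^2\coCom$-coalgebra $\La^2\coCom(V_A)$ sits inside $\Ger^{\vee}(V_A)$; restricting $U_{\Ger}$ to this sub-coalgebra yields a morphism of dg $\La^2\coCom$-coalgebras, that is, a $\La\Lie_{\infty}$-morphism $U_{\Lie} \colon V_A \leadsto \Cbu(A)$. Here Property~\ref{P:L-infty-OK} (a consequence of \eqref{Lie-infty-OK} and \eqref{Psi-conditions}) is exactly what makes this restriction legitimate: it identifies the $\La\Lie_{\infty}$ part of $\Cbu(A)^{\Psi}$ with the strict $\La\Lie$-structure given by the Gerstenhaber bracket, matching the strict $\La\Lie$-structure on $V_A$. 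Since the linear term of $U_{\Lie}$ is again the HKR embedding \eqref{HKR}, it is a quasi-isomorphism of complexes, so $U_{\Lie}$ is a $\La\Lie_{\infty}$ quasi-isomorphism and $U_{\Ger}$ is its promised $\Ger_{\infty}$ extension. The only genuinely nontrivial input is the obstruction-vanishing behind Corollary~\ref{cor:exists!}; the restriction step is formal once that corollary and Property~\ref{P:L-infty-OK} are available.
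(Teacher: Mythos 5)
Your proposal follows the paper's own argument exactly: invoke Corollary \ref{cor:exists!} (intrinsic formality of $V_A$) to produce a $\Ger_{\infty}$ quasi-isomorphism $U_{\Ger}$ with linear term the HKR embedding, then restrict to $\La^2\coCom(V_A)$ using Property \ref{P:L-infty-OK} to obtain $U_{\Lie}$. The extra sketch of the obstruction-theoretic induction is just an outline of the content of Corollary \ref{cor:exists!}, which the paper likewise delegates to the appendix, so there is no substantive difference.
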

\begin{remark}
\label{rem:}
In this paper we tacitly assume that the linear part of 
every  $\La\Lie_{\infty}$ (resp. $\Ger_{\infty}$) quasi-isomorphism 
from $V_A$ to $\Cbu(A)$ (resp. $\Cbu(A)^{\Psi}$) coincides with 
the Hochschild-Kostant-Rosenberg embedding of polyvector 
fields into Hochschild cochains. 
\end{remark}

Since the above construction involves several choices it leaves the 
following two obvious questions: 

~\\
{\bf Question A.}  Is it possible to construct two homotopy inequivalent 
$\La\Lie_{\infty}$-quasi-isomorphisms \eqref{U-Lie} corresponding 
to the same map $\Psi$ \eqref{Psi}? And if no then

~\\
{\bf Question B.} Are $\La\Lie_{\infty}$-quasi-isomorphisms $U_{\Lie}$ and $\wt{U}_{\Lie}$  
\eqref{U-Lie} homotopy equivalent if so are the corresponding maps of 
dg operads $\Psi$ and $\wt{\Psi}$ \eqref{Psi}?\\

The (expected) answer (NO) to Question A is given in the following proposition:
\begin{prop}
\label{prop:Lie-morph-unique}
Let $\Psi$ a map of dg operads \eqref{Psi} satisfying 
\eqref{Psi-conditions} and 
\begin{equation}
\label{U-Lie-wtU-Lie}
U_{\Lie}, \, \wt{U}_{\Lie} \,:\, V_A \leadsto \Cbu(A)
\end{equation}
be $\La\Lie_{\infty}$ quasi-morphisms which extend to $\Ger_{\infty}$
quasi-isomorphisms 
\begin{equation}
\label{U-Ger-wtU-Ger}
U_{\Ger}, \, \wt{U}_{\Ger} \,:\, V_A \leadsto \Cbu(A)^{\Psi}
\end{equation}
respectively. Then $U_{\Lie}$ is homotopy equivalent to $\wt{U}_{\Lie}$. 
\end{prop}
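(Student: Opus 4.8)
The plan is to lift the problem to the level of $\Ger_{\infty}$-morphisms, where the freeness of $A$ makes $V_A$ rigid, and then to descend the resulting homotopy back to the $\La\Lie_{\infty}$-parts. Concretely, I would first record that, by hypothesis, $U_{\Ger}$ and $\wt{U}_{\Ger}$ are $\Ger_{\infty}$-quasi-isomorphisms from $V_A$ (with its standard Gerstenhaber structure) to the same target $\Cbu(A)^{\Psi}$, and that both have the Hochschild-Kostant-Rosenberg embedding \eqref{HKR} as their linear term. The two $\La\Lie_{\infty}$-morphisms $U_{\Lie}, \wt{U}_{\Lie}$ of \eqref{U-Lie} are then obtained from $U_{\Ger}, \wt{U}_{\Ger}$ by keeping only the cocommutative components, i.e. by precomposing the $\infty$-morphisms with the inclusion of cofree coalgebras $\La^2\coCom(V_A) \hookrightarrow \Ger^{\vee}(V_A)$ induced by the cooperad inclusion $\La^2\coCom \hookrightarrow \Ger^{\vee}$; that the target structure is the strict $\La\Lie$-algebra $\Cbu(A)$ is exactly Property \ref{P:L-infty-OK}.

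The crux is to show that $U_{\Ger}$ and $\wt{U}_{\Ger}$ are themselves homotopic as $\Ger_{\infty}$-morphisms. This is a rigidity statement for $V_A$: for a fixed target $\Ger_{\infty}$-structure and a fixed linear term, the $\Ger_{\infty}$-quasi-isomorphism out of $V_A$ is unique up to homotopy. I would obtain it from the rigidity clause of Corollary \ref{cor:exists!} (proved in Appendix \ref{app:rigid}), whose existence half we already used to produce $U_{\Ger}$ in \eqref{U-Ger}. Mechanically, one runs the standard obstruction argument for $\infty$-morphisms order by order: having matched $U_{\Ger}$ and $\wt{U}_{\Ger}$ up to homotopy through weight $n-1$, the obstruction to continuing lives in a deformation cohomology group of the pair $(V_A, \Cbu(A)^{\Psi})$, and the whole point is that, because $A$ is free graded-commutative (so that $\Der_{\bbK}(A)$ is free over $A$ and $V_A$ is the corresponding free structure), these groups vanish in the relevant degrees. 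This cohomological vanishing is the genuine content and the main obstacle; everything else is formal.

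Granting the homotopy $U_{\Ger} \sim \wt{U}_{\Ger}$, the conclusion is immediate. The operation of taking cocommutative components, described above, is linear and functorial in the $\infty$-morphism; consequently it carries a homotopy of $\Ger_{\infty}$-quasi-isomorphisms --- presented, say, by an $\infty$-morphism with values in $\Cbu(A)^{\Psi} \otimes \Omega^{\bullet}(\Delta^1)$, or equivalently by a gauge equivalence in the associated convolution Lie algebra --- to a homotopy between the corresponding $\La\Lie_{\infty}$-morphisms. Applying this to the homotopy between $U_{\Ger}$ and $\wt{U}_{\Ger}$ yields a homotopy between $U_{\Lie}$ and $\wt{U}_{\Lie}$, which is the assertion of Proposition \ref{prop:Lie-morph-unique}.
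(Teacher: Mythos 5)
Your proposal is correct and follows essentially the same route as the paper: the paper's proof simply invokes the second (uniqueness) claim of Corollary \ref{cor:exists!} to conclude that $U_{\Ger}$ and $\wt{U}_{\Ger}$ are homotopy equivalent, and then restricts the homotopy to the $\La^2\coCom$-coalgebra $\La^2\coCom(V_A)$, exactly as you describe. Your additional remarks on the obstruction-theoretic content behind Corollary \ref{cor:exists!} and on why restriction carries homotopies to homotopies are accurate elaborations of steps the paper leaves implicit.
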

\begin{proof}
This statement is essentially a consequence of general 
Corollary \ref{cor:exists!} from Appendix \ref{app:int-formal}. 

Indeed, the second claim of Corollary \ref{cor:exists!} implies that 
$\Ger_{\infty}$-morphisms \eqref{U-Ger-wtU-Ger} are homotopy equivalent. 
Hence so are their restrictions to the $\La^2\coCom$-coalgebra  
$$
\La^2\coCom(V_A)
$$ 
which coincide with $U_{\Lie}$ and $\wt{U}_{\Lie}$, respectively. 
\end{proof}

The expected answer (YES) to Question B is given in the following 
addition to Theorem \ref{thm:Tamarkin}: 
\begin{thm}
\label{thm:indepen}
The homotopy type of $U_{\Lie}$ \eqref{U-Lie} depends only on 
the homotopy type of the map $\Psi$ \eqref{Psi}.
\end{thm}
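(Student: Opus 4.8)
The plan is to deduce the statement from Proposition \ref{prop:Lie-morph-unique} together with a \emph{concordance} argument that realizes a homotopy $\Psi \sim \wt\Psi$ on the level of the associated $\La\Lie_\infty$-quasi-isomorphisms. Proposition \ref{prop:Lie-morph-unique} already shows that, for a fixed map $\Psi$ satisfying \eqref{Psi-conditions}, the homotopy type of $U_{\Lie}$ \eqref{U-Lie} is well defined (independent of the auxiliary choices entering its construction). Thus it suffices to prove that if $\Psi$ and $\wt\Psi$ are homotopic as maps of dg operads \eqref{Psi}, then the resulting $U_{\Lie}$ and $\wt U_{\Lie}$ are homotopy equivalent as $\La\Lie_\infty$-quasi-isomorphisms $V_A \leadsto \Cbu(A)$.

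First I would encode the homotopy $\Psi \sim \wt\Psi$ as a single map of dg operads $\Xi : \Ger_\infty \to \Bra \otimes \bbK[t, dt]$ into the path object obtained by tensoring $\Bra$ with the commutative dg algebra $\bbK[t,dt]$ of polynomial forms on the line, with $\Xi|_{t=0} = \Psi$ and $\Xi|_{t=1} = \wt\Psi$; this is available because $\Ger_\infty = \Cobar(\Ger^\vee)$ is quasi-free, hence cofibrant. The action of $\Bra$ on $\Cbu(A)$ extends $\bbK[t,dt]$-linearly to an action of $\Bra \otimes \bbK[t,dt]$ on $\Cbu(A) \otimes \bbK[t,dt]$, so $\Xi$ equips $\Cbu(A) \otimes \bbK[t,dt]$ with a $\Ger_\infty$-structure restricting to $\Cbu(A)^{\Psi}$ and $\Cbu(A)^{\wt\Psi}$ at the two endpoints. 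The crucial point is that the same degree bookkeeping that produces \eqref{Lie-infty-OK} applies verbatim in $\Bra \otimes \bbK[t,dt]$: for $n \ge 3$ there is no room in the relevant degree $3-2n$ (neither in the polynomial nor in the $dt$-component), and in arity $2$ the cocycle condition forces $\Xi(\bs (b_1 b_2)^*) = T + T_{21}$ for all $t$. Hence the $\La\Lie_\infty$ part of this interpolating family of $\Ger_\infty$-structures is the \emph{constant} Gerstenhaber bracket, i.e. the dg $\La\Lie$-algebra $(\Cbu(A), \{\,,\,\}) \otimes \bbK[t,dt]$.

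Next I would invoke the version of Corollary \ref{cor:exists!} carried out over the base commutative dg algebra $\bbK[t,dt]$: since the obstruction-theoretic construction of \eqref{U-Ger} is functorial in the ground CDGA, it produces a $\Ger_\infty$-quasi-isomorphism $\widehat U_{\Ger} : V_A \otimes \bbK[t,dt] \leadsto \Cbu(A) \otimes \bbK[t,dt]$ whose linear term is the (constant) Hochschild--Kostant--Rosenberg embedding \eqref{HKR}. Restricting $\widehat U_{\Ger}$ to the $\La^2\coCom$-coalgebra, and using that the target $\La\Lie_\infty$-structure is constant (the previous paragraph), yields a $\La\Lie_\infty$-quasi-isomorphism $\widehat U_{\Lie} : V_A \otimes \bbK[t,dt] \leadsto (\Cbu(A), \{\,,\,\}) \otimes \bbK[t,dt]$, that is, a concordance of $\La\Lie_\infty$-quasi-isomorphisms with fixed source and fixed target; its endpoints $\widehat U_{\Lie}|_{t=0}$ and $\widehat U_{\Lie}|_{t=1}$ are therefore homotopy equivalent. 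On the other hand, $\widehat U_{\Ger}|_{t=0}$ is a $\Ger_\infty$-quasi-isomorphism $V_A \leadsto \Cbu(A)^{\Psi}$ extending the HKR embedding, so Proposition \ref{prop:Lie-morph-unique} identifies $\widehat U_{\Lie}|_{t=0}$ with $U_{\Lie}$ up to homotopy, and likewise $\widehat U_{\Lie}|_{t=1} \sim \wt U_{\Lie}$. Concatenating these homotopies gives $U_{\Lie} \sim \wt U_{\Lie}$, as desired.

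The step I expect to be the main obstacle is the construction of the family $\widehat U_{\Ger}$, i.e. running the existence statement of Corollary \ref{cor:exists!} over the non-reduced base $\bbK[t,dt]$ and checking that the relevant (co)homological obstructions, as well as the rigidity of the HKR map, persist after tensoring with $\bbK[t,dt]$; this is exactly what upgrades a homotopy of operad maps into an honest concordance of $\La\Lie_\infty$-quasi-isomorphisms. A secondary technical point, though one settled cleanly by the degree count above, is verifying that the $\La\Lie_\infty$ part of the interpolating $\Ger_\infty$-structure stays constant, since otherwise the restriction $\widehat U_{\Lie}$ would not have a fixed target and the concordance argument in the last paragraph would break down.
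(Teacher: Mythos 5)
Your proposal is correct and follows essentially the same route as the paper: encode the homotopy as a map of dg operads $\Ger_\infty \to \Bra \otimes \Omb(\bbK)$, check by the same degree count that the $\La\Lie_{\infty}$-part of the induced $\Ger_\infty$-structure on $\Cbu(A)\otimes\Omb(\bbK)$ is the constant Gerstenhaber bracket, produce a $\Ger_\infty$-quasi-isomorphism into this path object via Corollary \ref{cor:exists!}, and identify the two endpoints with $U_{\Lie}$ and $\wt{U}_{\Lie}$ using Proposition \ref{prop:Lie-morph-unique}. The step you flag as the main obstacle is a non-issue in the paper's formulation: Corollary \ref{cor:exists!} applies verbatim with source $V_A$ and target the complex $\Cbu(A)\otimes\Omb(\bbK)$ (whose cohomology is still $V_A$), so no relative version of the obstruction theory over the base $\bbK[t,dt]$ is needed.
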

\begin{proof}
Let $\Psi$ and $\wt{\Psi}$ be maps of dg operads \eqref{Psi} satisfying 
\eqref{Psi-conditions} and let 
\begin{equation}
\label{U-Lie-here}
U_{\Lie}  \,:\, V_A \leadsto \Cbu(A)
\end{equation}
\begin{equation}
\label{wtU-Lie-here}
\wt{U}_{\Lie} \,:\, V_A \leadsto \Cbu(A)
\end{equation}
be $\La\Lie_{\infty}$ quasi-morphisms which extend to $\Ger_{\infty}$
quasi-isomorphisms 
\begin{equation}
\label{U-Ger-and-wtU-Ger}
U_{\Ger} \,:\, V_A \leadsto \Cbu(A)^{\Psi}\,, \qquad 
\textrm{and} \qquad
\wt{U}_{\Ger} \,:\, V_A \leadsto \Cbu(A)^{\wt{\Psi}}
\end{equation}
respectively.  Our goal is to show that if $\Psi$ is homotopy equivalent to $\wt{\Psi}$ then 
$U_{\Lie}$ is homotopy equivalent to $\wt{U}_{\Lie}$.

Let us denote by $\Omb(\bbK)$ the dg commutative algebra of polynomial 
forms on the affine line with the canonical coordinate $t$.  

Since quasi-isomorphisms $\Psi, \widetilde{\Psi} : \Ger_{\infty} \to \Bra$ are 
homotopy equivalent, we have\footnote{For justification of this step see, for example, 
\cite[Section 5.1]{notes}.} a map of dg operads 
\begin{equation}
\label{mH}
\mH : \Ger_{\infty} \to \Bra \otimes \Omb(\bbK)
\end{equation}
such that 
$$
\Psi = p_0 \circ \mH\,, \qquad \textrm{and} \qquad 
\widetilde{\Psi} = p_1 \circ \mH\,,
$$
where $p_0$ and $p_1$ are the canonical maps (of dg operads)
$$
p_0, p_1:  \Bra \otimes \Omb(\bbK) \to \Bra\,,
$$
$$
p_0(v) : = v \Big|_{d t=0,~ t = 0}\,, \qquad
p_1(v) : = v \Big|_{d t=0,~ t = 1}\,.
$$

The map $\mH$ induces a $\Ger_\infty$-structure on $\Cbu(A) \otimes \Omega^\bullet(\bbK)$ such that 
the evaluation maps (which we denote by the same letters)
\begin{equation}
\label{p0p1}
\begin{array}{ccc}
p_0 : \Cbu(A) \otimes \Omega^\bullet(\bbK) \to \Cbu(A)^\Psi\,, &~& p_0(v) : = v \big|_{d t=0,~ t = 0}\,, \\
p_1 : \Cbu(A) \otimes \Omega^\bullet(\bbK) \to \Cbu(A)^{\widetilde{\Psi}}\,, &~&
p_1(v) : = v \big|_{d t=0,~ t = 1}\,.
\end{array}
\end{equation}
are strict quasi-isomorphisms of the corresponding $\Ger_{\infty}$-algebras. 

So, in this proof, we consider the cochain complex $\Cbu(A) \otimes \Omega^\bullet(\bbK)$ 
with the $\Ger_{\infty}$-structure coming from $\mH$ \eqref{mH}.
The same degree bookkeeping argument in $\Bra$ shows 
that\footnote{Here, we use basis \eqref{dual-basis} in $\Ger^{\vee}(n)$.}  
\begin{equation}
\label{Lie-infty-OK-mH}
\mH (\bs (b_1 b_2 \dots b_n)^*) = 0\,. 
\end{equation}
Hence, the $\La\Lie_{\infty}$ part of the $\Ger_{\infty}$-structure 
on $\Cbu(A) \otimes \Omega^\bullet(\bbK)$  coincides with the $\La\Lie$-structure given by 
the Gerstenhaber bracket extended from $\Cbu(A)$ to  $\Cbu(A) \otimes \Omega^\bullet(\bbK)$
to by $ \Omega^\bullet(\bbK)$-linearity.

Since the canonical embedding 
$$
P \mapsto P \otimes 1 :
\Cbu(A) \hookrightarrow \Cbu(A) \otimes \Omega^\bullet(\bbK)
$$ 
is a quasi-isomorphism of cochain complexes, Corollary \ref{cor:exists!} 
from Appendix \ref{app:int-formal} implies that there exists a 
$\Ger_{\infty}$ quasi-isomorphism 
\begin{equation}
\label{U-Ger-mH}
U_{\Ger}^{\mH} : V_A \leadsto \Cbu(A) \otimes \Omega^\bullet(\bbK)\,,
\end{equation}
where $V_A$ is considered with the standard Gerstenhaber structure. 

Since the $\La\Lie_{\infty}$ part of the $\Ger_{\infty}$-structure 
on $\Cbu(A) \otimes \Omega^\bullet(\bbK)$  coincides with the standard 
$\La\Lie$-structure, the restriction of $U_{\Ger}^{\mH}$ to the 
$\La^2\coCom$-coalgebra $\La^2\coCom(V_A)$ gives us 
a homotopy connecting the  $\La\Lie_{\infty}$ quasi-isomorphism 
\begin{equation}
\label{dude-0}
p_0 \circ U_{\Ger}^{\mH} \Big|_{\La^2\coCom(V_A)}   
 \,:\, V_A \leadsto \Cbu(A)
\end{equation}
to the $\La\Lie_{\infty}$ quasi-isomorphism  
\begin{equation}
\label{dude-1}
p_1 \circ U_{\Ger}^{\mH} \Big|_{\La^2\coCom(V_A)}   
 \,:\, V_A \leadsto \Cbu(A)\,,
\end{equation}
where $p_0$ and $p_1$ are evaluation maps \eqref{p0p1}.

Let us now observe that $\La\Lie_{\infty}$ quasi-isomorphisms \eqref{dude-0} and 
\eqref{dude-1} extend to $\Ger_{\infty}$ quasi-isomorphisms 
\begin{equation}
\label{two-dudes}
p_0 \circ U_{\Ger}^{\mH} \,:\, V_A \leadsto \Cbu(A)^{\Psi}\,, \qquad 
\textrm{and} \qquad
p_1 \circ U_{\Ger}^{\mH}  \,:\, V_A \leadsto \Cbu(A)^{\wt{\Psi}}
\end{equation}
respectively. Hence, by Proposition \ref{prop:Lie-morph-unique}, 
$\La\Lie_{\infty}$ quasi-isomorphism \eqref{dude-0} is homotopy equivalent to 
\eqref{U-Lie-here} and $\La\Lie_{\infty}$ quasi-isomorphism \eqref{dude-1}
is homotopy equivalent to \eqref{wtU-Lie-here}.

Thus $\La\Lie_{\infty}$ quasi-isomorphisms \eqref{U-Lie-here} and
\eqref{wtU-Lie-here} are indeed homotopy equivalent. 
\end{proof}

The general conclusion of this section is that Tamarkin's construction \cite{Hinich}, 
\cite{Dima-proof} gives us a map
\begin{equation}
\label{mT}
\mT :  \pi_0 \big( \Ger_{\infty} \to \Bra \big) \to  \pi_0 \big(  V_A \leadsto \Cbu(A)  \big)
\end{equation}
from the set $\pi_0 \big( \Ger_{\infty} \to \Bra \big)$ of homotopy classes of 
operad morphisms \eqref{Psi} satisfying conditions \eqref{Psi-conditions} to 
the set $\pi_0 \big(  V_A \leadsto \Cbu(A)  \big)$ of homotopy classes of 
$\La\Lie_{\infty}$-morphisms from $V_A$ to $\Cbu(A)$ whose linear term 
is the Hochschild-Kostant-Rosenberg embedding.

\section{Actions of $\GRT_1$}
\label{sec:grtact}

Let $\cC$ be a coaugmented cooperad in the category of graded vector spaces 
and $\cC_{\c}$ be the cokernel of the coaugmentation. We assume that 
$\cC(0) = \bfzero$ and $\cC(1) = \bbK$. 

Let us denote by 
\begin{equation}
\label{Der-pr-Cobar-cC}
\Der' \big( \Cobar(\cC) \big)
\end{equation}
the dg Lie algebra of derivation $\cD$ of $\Cobar(\cC)$ satisfying the condition 
\begin{equation}
\label{cond-prime-gen}
p_{\bs\, \cC_{\c}} \circ \cD = 0\,, 
\end{equation}
where $p_{\bs\, \cC_{\c}}$ is the canonical projection $\Cobar(\cC) \to \bs\, \cC_{\c}$.
Conditions $\cC(0) = \bfzero$, $\cC(1) = \bbK$ and \eqref{cond-prime-gen} imply 
that $\Der' \big( \Cobar(\cC) \big)^0$ and $H^0 \big(\Der' ( \Cobar(\cC) ) \big)$
are pronilpotent Lie algebras. 

In this paper, we are mostly interested in the case when $\cC = \La^2\coCom$ and 
$\cC = \Ger^{\vee}$. The corresponding dg operads $\La\Lie_{\infty} = \Cobar(\La^2 \coCom)$ and 
$\Ger_{\infty} = \Cobar(\Ger^{\vee})$ govern $\La\Lie_{\infty}$ and $\Ger_{\infty}$ algebras, 
respectively.  

A simple degree bookkeeping shows that 
\begin{equation}
\label{holes}
\Der' ( \La\Lie_{\infty} )^{\le 0} = \bfzero\,,
\end{equation}
i.e. the dg Lie algebra $\Der' ( \La\Lie_{\infty} )$ does not 
have non-zero elements in degrees $\le 0$. In particular, the 
Lie algebra $H^0 \big(\Der' (\La\Lie_{\infty}) \big)$ is zero. 

On the other hand, the Lie algebra
\begin{equation}
\label{mg}
\mg = H^0 \big(\Der'(\Ger_\infty) \big) 
\end{equation}
is much more interesting. According to Willwacher's theorem \cite[Theorem 1.2]{Thomas}, 
this Lie algebra is isomorphic to the pro-nilpotent part $\grt_1$ of the Grothendieck-Teichm\"uller Lie algebra
$\grt$ \cite[Section 4.2]{AT}. Hence, the group $\exp(\mg)$ is isomorphic to 
the group $\GRT_1 = \exp(\grt_1)$.  
 
Let us now describe how the group $\exp(\mg) \cong \GRT_1$ acts both on 
the source and the target of Tamarkin's map $\mT$ \eqref{mT}.  

\subsection{The action of $\GRT_1$ on $\pi_0 \big( \Ger_{\infty} \to \Bra \big)$} 
Let $v$ be a vector of $\mg$ represented by a (degree zero) cocycle $\cD \in \Der'(\Ger_\infty)$. 
Since the Lie algebra $\Der'(\Ger_\infty)^0$ is pro-nilpotent, $\cD$ gives us an automorphism 
\begin{equation}
\label{exp-cD}
\exp(\cD)
\end{equation}
of the operad $\Ger_{\infty}$. 

Let $\Psi$ be a quasi-isomorphism of dg operads \eqref{Psi}.
Due to Proposition B.2 in \cite{Action}, the homotopy type of the composition 
$$
\Psi \circ \exp(\cD)
$$
does not depend on the choice of the cocycle $\cD$ in 
the cohomology class $v$. Furthermore, for every pair 
of (degree zero) cocycles $\cD, \wt{\cD} \in \Der'(\Ger_\infty)$ 
we have  
$$
\Psi \circ \exp(\cD) \circ \exp(\wt{\cD}) = \Psi \circ \exp \big( \CH(\cD, \wt{\cD}) \big)\,,
$$
where $\CH(x,y)$ denotes the Campbell-Hausdorff series in  symbols $x,y$\,.
 
Thus the assignment 
$$
\Psi \to \Psi \circ \exp(\cD)
$$
induces a {\it right} action of the group $\exp(\mg)$ on the 
set  $\pi_0 \big( \Ger_{\infty} \to \Bra \big)$ of homotopy classes of 
operad morphisms \eqref{Psi}.  

\subsection{The action of $\GRT_1$ on $\pi_0 \big(  V_A \leadsto \Cbu(A)  \big)$} 
Let us now show that $\exp(\mg) \cong \GRT_1$ also acts on 
the set $\pi_0 \big(  V_A \leadsto \Cbu(A)  \big)$ of homotopy classes of 
$\La\Lie_{\infty}$-morphisms from $V_A$ to $\Cbu(A)$. 

For this purpose, we denote by 
\begin{equation}
\label{Ger-infty-End-VA}
\Act_{stan} :  \Ger_{\infty} \to \End_{V_A}
\end{equation}
the operad map corresponding to the standard 
Gerstenhaber structure on $V_A$. 

Then, given a cocycle $\cD \in \Der'(\Ger_\infty)$ representing $v \in \mg$,  
we may precompose map \eqref{Ger-infty-End-VA}
with automorphism \eqref{exp-cD}. 
This way, we equip the graded vector space $V_A$ with a new $\Ger_{\infty}$-structure
$Q^{\exp(\cD)}$ whose binary operations are the standard ones.  Therefore, by Corollary 
\ref{cor:VA-VAQ} from Appendix \ref{app:VA-to-VA}, there exists a $\Ger_{\infty}$ quasi-isomorphism 
\begin{equation}
\label{corr}
U_{\corr} : V_A \to V_A^{Q^{\exp(\cD)}}
\end{equation}
from $V_A$ with the standard Gerstenhaber structure to $V_A$ with 
the $\Ger_{\infty}$-structure $Q^{\exp(\cD)}$.

Due to observation \eqref{holes}, the restriction of $\cD$ onto the suboperad 
$\Cobar(\La^2\coCom) \subset \Cobar(\Ger^{\vee})$ is zero. 
Hence, for every degree zero cocycle $\cD \in \Der'(\Ger_\infty)$, we have
\begin{equation}
\label{aut-to-Lie-infty}
\exp(\cD) \Big|_{\Cobar(\La^2\coCom)} ~ = ~ \Id : \Cobar(\La^2\coCom) \to \Cobar(\La^2\coCom)\,.
\end{equation}
Therefore the $\La\Lie_{\infty}$-part of the 
$\Ger_{\infty}$-structure $Q^{\exp(\cD)}$ coincides with the standard 
$\La\Lie$-structure on $V_A$ given by the Schouten bracket.
Hence the restriction of the $\Ger_{\infty}$ quasi-isomorphism $U_{\corr}$
onto the $\La^2\coCom$-coalgebra $\La^2\coCom(V_A)$ gives us 
a $\La\Lie_{\infty}$-automorphism
\begin{equation}
\label{L-infty-aut}
U^{\cD} : V_A \leadsto V_A\,. 
\end{equation}

Note that, for a fixed  $\Ger_{\infty}$-structure $Q^{\exp(\cD)}$,  
$\Ger_{\infty}$ quasi-isomorphism \eqref{corr} is far 
from unique. However, the second statement of Corollary 
\ref{cor:exists!} implies that the homotopy class 
of \eqref{corr} {\it is} unique. Therefore, the assignment 
$$
\cD \mapsto \big[ U^{\cD}  \big]
$$
is a well defined map from the set of degree zero cocycles of 
$\Der'(\Ger_\infty)$ to homotopy classes of $\La\Lie_{\infty}$-automorphisms
of $V_A$. 
 
This statement can be strengthened further:
\begin{prop}
\label{prop:U-cD-OK}
The homotopy type of $U^{\cD}$ does not depend on the 
choice of the representative $\cD$ of the cohomology class $v$. 
Furthermore, for any pair of degree zero cocycles 
$\cD_1, \cD_2 \in  \Der'(\Ger_\infty)$, the composition 
$U^{\cD_1} \circ U^{\cD_2}$ is homotopy equivalent to 
$U^{\CH(\cD_1, \cD_2)}$, where $\CH(x,y)$ denotes the 
 Campbell-Hausdorff series in  symbols $x,y$. 
\end{prop}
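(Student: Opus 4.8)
The plan is to treat the two assertions separately, in both cases reducing matters to the homotopy uniqueness of the correction quasi-isomorphism furnished by the second statement of Corollary \ref{cor:exists!}. Throughout I use that, since we work inside the pronilpotent group $\exp\big(\Der'(\Ger_\infty)^0\big)$, the Campbell-Hausdorff formula gives the identity of automorphisms $\exp(\cD_1)\circ\exp(\cD_2)=\exp\big(\CH(\cD_1,\cD_2)\big)$ of $\Ger_\infty$, whence the identity of $\Ger_\infty$-structures $Q^{\exp(\CH(\cD_1,\cD_2))}=\Act_{stan}\circ\exp(\cD_1)\circ\exp(\cD_2)$ on $V_A$. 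I also use repeatedly that, by \eqref{holes} and \eqref{aut-to-Lie-infty}, every automorphism $\exp(\cD)$ restricts to the identity on the suboperad $\Cobar(\La^2\coCom)\subset\Ger_\infty$, so that precomposing any $\Ger_\infty$-structure on $V_A$ with $\exp(\cD)$ leaves its $\La\Lie_\infty$-part equal to the standard Schouten bracket.

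For the first assertion I would mimic the proof of Theorem \ref{thm:indepen}. If $\cD$ and $\widetilde{\cD}$ represent the same class $v\in\mg$, they are cohomologous in $\Der'(\Ger_\infty)$, and hence $\exp(\cD)$ and $\exp(\widetilde{\cD})$ are connected by a homotopy, i.e. by a map of dg operads $\mH:\Ger_\infty\to\Ger_\infty\otimes\Omega^\bullet(\bbK)$ with $\exp(\cD)=p_0\circ\mH$ and $\exp(\widetilde{\cD})=p_1\circ\mH$; the existence of such an $\mH$ follows from the fact that $\cD$ and $\widetilde{\cD}$ are cohomologous, by the same reasoning that justifies \eqref{mH} in the proof of Theorem \ref{thm:indepen}, and the degree bound \eqref{holes} lets one arrange $\mH$ to restrict to $\Id\otimes 1$ on $\Cobar(\La^2\coCom)$. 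Composing $\mH$ with $\Act_{stan}$ equips $V_A\otimes\Omega^\bullet(\bbK)$ with a $\Ger_\infty$-structure whose $\La\Lie_\infty$-part is the standard Schouten bracket extended $\Omega^\bullet(\bbK)$-linearly and whose evaluations at $t=0,1$ are $Q^{\exp(\cD)}$ and $Q^{\exp(\widetilde{\cD})}$. Applying the $\Omega^\bullet(\bbK)$-linear version of Corollary \ref{cor:VA-VAQ} (exactly as Corollary \ref{cor:exists!} is used in the proof of Theorem \ref{thm:indepen}) produces a $\Ger_\infty$ quasi-isomorphism $V_A\to V_A\otimes\Omega^\bullet(\bbK)$; its restriction to $\La^2\coCom(V_A)$ is a homotopy whose endpoints are, by the uniqueness part of Corollary \ref{cor:exists!}, homotopic to $U^{\cD}$ and $U^{\widetilde{\cD}}$ respectively. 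This proves $U^{\cD}\simeq U^{\widetilde{\cD}}$.

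For the second assertion I would realize the composite $U^{\cD_1}\circ U^{\cD_2}$ as the restriction of a single $\Ger_\infty$ quasi-isomorphism into $(V_A,Q^{\exp(\CH(\cD_1,\cD_2))})$ and then invoke uniqueness. Start from the correction maps $U_{\corr}^{(2)}:(V_A,\Act_{stan})\to(V_A,\Act_{stan}\circ\exp(\cD_2))$ and $U_{\corr}^{(1)}:(V_A,\Act_{stan})\to(V_A,\Act_{stan}\circ\exp(\cD_1))$, whose restrictions to $\La^2\coCom(V_A)$ are $U^{\cD_2}$ and $U^{\cD_1}$. Precomposing every structure occurring in $U_{\corr}^{(1)}$ with the automorphism $\exp(\cD_2)$ yields a $\Ger_\infty$ quasi-isomorphism
$$
(V_A,\Act_{stan}\circ\exp(\cD_2))\ \longrightarrow\ (V_A,\Act_{stan}\circ\exp(\cD_1)\circ\exp(\cD_2)) = (V_A, Q^{\exp(\CH(\cD_1,\cD_2))})\,,
$$
and, because $\exp(\cD_2)$ acts as the identity on $\Cobar(\La^2\coCom)$, this twisted map has the same restriction to $\La^2\coCom(V_A)$ as $U_{\corr}^{(1)}$, namely $U^{\cD_1}$. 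Composing it with $U_{\corr}^{(2)}$ gives a $\Ger_\infty$ quasi-isomorphism $(V_A,\Act_{stan})\to(V_A,Q^{\exp(\CH(\cD_1,\cD_2))})$ whose restriction to $\La^2\coCom(V_A)$ is exactly $U^{\cD_1}\circ U^{\cD_2}$ (restriction to the $\La^2\coCom$-coalgebra being compatible with composition). By the second statement of Corollary \ref{cor:exists!}, this quasi-isomorphism is homotopic to the correction map defining $U^{\CH(\cD_1,\cD_2)}$, so their restrictions agree up to homotopy, giving $U^{\cD_1}\circ U^{\cD_2}\simeq U^{\CH(\cD_1,\cD_2)}$.

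I expect the main obstacle to be the precise construction of the twisting operation used in the third paragraph: namely, showing that precomposition of $\Ger_\infty$-structures with the operad automorphism $\exp(\cD_2)$ genuinely transports the $\infty$-quasi-isomorphism $U_{\corr}^{(1)}$ to an $\infty$-quasi-isomorphism between the twisted structures, functorially in composition and compatibly with restriction to the $\La^2\coCom$-coalgebra. Making this rigorous requires passing to the description of $\Ger_\infty$-morphisms as morphisms of the cofree $\Ger^\vee$-coalgebras and verifying that the change of codifferential induced by $\exp(\cD_2)$ is implemented by a coalgebra isomorphism which is the identity on the $\La^2\coCom$-cogenerators; the required compatibilities are of the same nature as those established in Proposition B.2 of \cite{Action}. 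Once this transport is in place, both assertions follow formally from the homotopy uniqueness in Corollary \ref{cor:exists!}.
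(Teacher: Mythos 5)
Your proof of the first claim follows the paper's own argument essentially verbatim: a homotopy between $\Act_{stan}\circ\exp(\cD)$ and $\Act_{stan}\circ\exp(\wt{\cD})$ yields a $\Ger_\infty$-structure on $V_A\otimes\Omb(\bbK)$ whose $\La\Lie_\infty$-part is standard by \eqref{holes}, and Corollary \ref{cor:exists!} plus its uniqueness clause finishes the argument. That part is fine.

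The second claim is where there is a genuine gap, and it sits exactly at the step you flag as "the main obstacle" but then underestimate. Your argument needs to transport the $\Ger_\infty$ quasi-isomorphism $U_{\corr}^{(1)}:(V_A,\Act_{stan})\leadsto(V_A,\Act_{stan}\circ\exp(\cD_1))$ along a \emph{simultaneous} precomposition of source and target structures by $\exp(\cD_2)$, keeping the underlying coalgebra map (or at least its restriction to $\La^2\coCom(V_A)$) unchanged. A derivation $\cD_2$ of $\Ger_\infty$ does not act diagonally on triples (structure, structure, $\infty$-morphism): the coalgebra map underlying $U_{\corr}^{(1)}$ intertwines the codifferentials determined by $\Act_{stan}$ and $\Act_{stan}\circ\exp(\cD_1)$, and there is no reason for it to intertwine the codifferentials obtained after precomposing both by $\exp(\cD_2)$. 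The correct device is the one the paper uses: lift $\cD_2$ to a degree zero cocycle $\cD_{\Cyl}\in\Der'(\Cyl(\Ger^{\vee}))$ and precompose the operad map $\Cyl(\Ger^{\vee})\to\End_{V_A,V_A}$ encoding the triple with $\exp(\cD_{\Cyl})$. This is not "of the same nature as Proposition B.2 of \cite{Action}"; it requires \cite[Theorem 4.3]{Action} (that $\res_1,\res_2$ are chain homotopic quasi-isomorphisms), which is the central technical input of the whole paper. Moreover, that theorem only guarantees that $\res_1(\cD_{\Cyl})$ and $\res_2(\cD_{\Cyl})$ are \emph{cohomologous} to $\cD_2$, not equal to it, so the twisted triple lands in $V_A^{Q^{\exp(\CH(\cD_1,\cD'))}}$ with $\cD'\neq\cD_2$ in general; one must then invoke the already-proved first claim to replace $\cD$ and $\cD'$ by $\cD_2$ up to homotopy. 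Finally, the assertion that the twisted morphism has the same restriction to $\La^2\coCom(V_A)$ is not automatic either: it is the content of Proposition \ref{prop:morph-Lie-OK} (vanishing of $\Der'(\Cyl(\La^2\coCom))$ in degrees $\le 0$), which must be proved separately. With these three ingredients supplied your outline becomes the paper's proof; without them the key step does not go through.
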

Let us postpone the technical proof of Proposition \ref{prop:U-cD-OK} 
to Subsection \ref{sec:U-cD-OK} and observe that this proposition implies 
the following statement:
\begin{cor}
\label{cor:U-cD-OK}
Let $\cD$ be a degree zero cocycle in $\Der'(\Ger_\infty)$ representing 
a cohomology class $v \in \mg$ and let $U_{\Lie}$ be a $\La\Lie_{\infty}$ quasi-isomorphism 
from $V_A$ to $\Cbu(A)$. The assignment 
\begin{equation}
\label{action-on-morph}
U_{\Lie} \mapsto U_{\Lie} \circ U^{\cD}
\end{equation}
induces a right action of the group $\exp(\mg)$ on the set 
$\pi_0 \big(  V_A \leadsto \Cbu(A)  \big)$ of homotopy classes of 
$\La\Lie_{\infty}$-morphisms from $V_A$ to $\Cbu(A)$. \qed 
\end{cor}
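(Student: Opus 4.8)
The plan is to deduce the corollary almost formally from Proposition~\ref{prop:U-cD-OK}, the only additional ingredient being the standard fact that composition of $\La\Lie_{\infty}$ quasi-isomorphisms is compatible with the homotopy relation. Recall that every element $g \in \exp(\mg)$ can be written as $g = \exp(v)$ with $v \in \mg = H^0\big(\Der'(\Ger_\infty)\big)$, that $v$ is represented by some degree zero cocycle $\cD \in \Der'(\Ger_\infty)$, and that the group law reads $\exp(v_1)\exp(v_2) = \exp\big(\CH(v_1,v_2)\big)$. I would check three things: that the assignment \eqref{action-on-morph} descends to a well-defined map on homotopy classes, that it satisfies the right-action axiom, and that the unit of $\exp(\mg)$ acts trivially.

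For well-definedness, I first note that the homotopy class of $U_{\Lie}\circ U^{\cD}$ depends only on the homotopy class of $U_{\Lie}$, since composition of $\La\Lie_{\infty}$ quasi-isomorphisms respects homotopy equivalence. Next, by the first statement of Proposition~\ref{prop:U-cD-OK}, the homotopy type of $U^{\cD}$ depends only on the class $v = [\cD] \in \mg$ and not on the chosen cocycle; composing on the right with a homotopy equivalence again preserves homotopy type, so the class $[\,U_{\Lie}\circ U^{\cD}\,]$ depends only on $[U_{\Lie}]$ and on $g = \exp(v)$. This produces a well-defined map $\pi_0\big(V_A \leadsto \Cbu(A)\big)\times \exp(\mg) \to \pi_0\big(V_A \leadsto \Cbu(A)\big)$.

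For the action axiom, let $g_i = \exp(v_i)$ be represented by degree zero cocycles $\cD_i$, $i=1,2$. Then $\big(U_{\Lie}\circ U^{\cD_1}\big)\circ U^{\cD_2} = U_{\Lie}\circ\big(U^{\cD_1}\circ U^{\cD_2}\big)$ by associativity of composition, and by the second statement of Proposition~\ref{prop:U-cD-OK} the inner composition $U^{\cD_1}\circ U^{\cD_2}$ is homotopy equivalent to $U^{\CH(\cD_1,\cD_2)}$. Since $\CH(\cD_1,\cD_2)$ is a degree zero cocycle representing $\CH(v_1,v_2)$, and since $g_1 g_2 = \exp\big(\CH(v_1,v_2)\big)$, the right-hand side is exactly the result of acting by $g_1 g_2$. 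Thus $([U_{\Lie}]\cdot g_1)\cdot g_2 = [U_{\Lie}]\cdot(g_1 g_2)$, which is the right-action condition. Finally, the unit $\exp(0)$ is represented by $\cD = 0$; then $\exp(\cD) = \Id$, the structure $Q^{\exp(\cD)}$ is the standard one, so the identity morphism is an admissible choice for $U_{\corr}$ and hence $U^{0}$ is homotopic to the identity $\La\Lie_{\infty}$-morphism of $V_A$, giving $U_{\Lie}\circ U^{0}\sim U_{\Lie}$.

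The honest content of the statement is entirely carried by Proposition~\ref{prop:U-cD-OK}, whose proof is postponed; granting it, the corollary is formal. The only points requiring care are the direction conventions — composing with $U^{\cD}$ on the source side of $U_{\Lie}$ is what makes this a right rather than a left action, and it is crucial that the group law matches $\CH(\cD_1,\cD_2)$ in the same order as the composition $U^{\cD_1}\circ U^{\cD_2}$ — together with the invocation of compatibility of composition with the homotopy relation for $\La\Lie_{\infty}$-morphisms. I would also remark, for consistency with \eqref{L-infty-aut}, that each $U^{\cD}$ is a genuine $\La\Lie_{\infty}$-automorphism: its homotopy inverse is $U^{-\cD}$, since $U^{\cD}\circ U^{-\cD}$ is homotopy equivalent to $U^{\CH(\cD,-\cD)} = U^{0} \sim \Id$.
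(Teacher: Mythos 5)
Your proposal is correct and follows exactly the route the paper intends: the paper states this corollary with an immediate \qed, treating it as a formal consequence of the two claims of Proposition \ref{prop:U-cD-OK} (independence of the representative cocycle and compatibility with the Campbell--Hausdorff series), which is precisely the deduction you spell out. Your additional checks of the unit axiom and of the order-matching in the right-action condition are the correct details to verify and contain no errors.
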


From now on, by abuse of notation, we denote by $U^{\cD}$
any representative in the homotopy class of $\La\Lie_{\infty}$-automorphism 
\eqref{L-infty-aut}.
 
\subsection{The theorem on $\GRT_1$-equivariance}  
The following theorem is the main result of this paper: 
\begin{thm}
\label{thm:main}
Let $\pi_0 \big( \Ger_{\infty} \to \Bra \big)$ be the set of homotopy 
classes of operad maps \eqref{Psi} from the dg operad $\Ger_{\infty}$ governing 
homotopy Gerstenhaber algebras to the dg operad $\Bra$ of brace trees.
Let $\pi_0 \big(  V_A \leadsto \Cbu(A) \big)$ be the set of homotopy classes 
of $\La\Lie_{\infty}$ quasi-isomorphisms\footnote{We tacitly assume that  
operad maps \eqref{Psi} satisfies conditions \eqref{Psi-conditions}
and $\La\Lie_{\infty}$ quasi-isomorphisms $V_A \leadsto \Cbu(A)$ 
extend the Hochschild-Kostant-Rosenberg embedding.} 
from the algebra $V_A$ of polyvector 
fields to the algebra $\Cbu(A)$ of Hochschild cochains of a graded affine space. 
Then Tamarkin's map $\mT$ \eqref{mT} commutes with the action of the 
group $\exp(\mg)$ which corresponds to Lie algebra \eqref{mg}.  
\end{thm}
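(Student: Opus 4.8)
The plan is to reduce the statement to the single identity
$$
\mT\big( \Psi \circ \exp(\cD) \big) = \mT(\Psi) \circ U^{\cD}
$$
in $\pi_0\big( V_A \leadsto \Cbu(A) \big)$, valid for every degree zero cocycle $\cD \in \Der'(\Ger_{\infty})$ representing a class $v \in \mg$ and every operad map $\Psi$ as in \eqref{Psi}. Since the source and target actions of $\exp(\mg)$ are both right actions (established above and in Corollary \ref{cor:U-cD-OK}), this identity is exactly the assertion that $\mT$ intertwines them. Recall that, by construction, $\mT(\Psi)$ is the homotopy class of the $\La\Lie_{\infty}$-restriction $U_{\Lie} = U_{\Ger}\big|_{\La^2\coCom(V_A)}$ of a $\Ger_{\infty}$ quasi-isomorphism $U_{\Ger} : V_A \leadsto \Cbu(A)^{\Psi}$ whose linear term is the Hochschild--Kostant--Rosenberg embedding \eqref{HKR}, and that $U^{\cD}$ is by definition the restriction to the $\La^2\coCom$-coalgebra of the correction quasi-isomorphism $U_{\corr} : V_A \to V_A^{Q^{\exp(\cD)}}$.

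First I would transport $U_{\Ger}$ along the automorphism $\exp(\cD)$ of $\Ger_{\infty}$. By construction the $\Ger_{\infty}$-structure on $\Cbu(A)^{\Psi \circ \exp(\cD)}$ (resp.\ on $V_A^{Q^{\exp(\cD)}}$) is obtained from the one on $\Cbu(A)^{\Psi}$ (resp.\ on $V_A$ with its standard Gerstenhaber structure) by precomposition with $\exp(\cD)$. Hence Theorem~4.3 of \cite{Action} supplies, out of $U_{\Ger}$, a $\Ger_{\infty}$ quasi-isomorphism
$$
\exp(\cD)^{*}U_{\Ger} : V_A^{Q^{\exp(\cD)}} \leadsto \Cbu(A)^{\Psi \circ \exp(\cD)}
$$
whose linear term is again the embedding \eqref{HKR}. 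Composing with $U_{\corr}$ (whose linear term is the identity) produces a $\Ger_{\infty}$ quasi-isomorphism
$$
\big(\exp(\cD)^{*}U_{\Ger}\big) \circ U_{\corr} : V_A \leadsto \Cbu(A)^{\Psi \circ \exp(\cD)}
$$
out of $V_A$ with its standard Gerstenhaber structure, still with linear term \eqref{HKR}. By the uniqueness part of Corollary \ref{cor:exists!}, its homotopy class agrees with that of any $\Ger_{\infty}$ quasi-isomorphism used to define $\mT(\Psi \circ \exp(\cD))$, so I may compute $\mT(\Psi \circ \exp(\cD))$ from this composite.

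Finally I would restrict everything to the $\La^2\coCom$-coalgebra. This restriction is functorial, so it sends the composite above to $\big(\exp(\cD)^{*}U_{\Ger}\big)\big|_{\La^2\coCom} \circ U_{\corr}\big|_{\La^2\coCom}$, and by definition $U_{\corr}\big|_{\La^2\coCom} = U^{\cD}$. The decisive point is that, by the vanishing \eqref{holes}, the automorphism $\exp(\cD)$ restricts to the identity on the suboperad $\Cobar(\La^2\coCom) \subset \Ger_{\infty}$; consequently the transport functor of Theorem~4.3 acts as the identity on $\La\Lie_{\infty}$-morphisms, giving $\big(\exp(\cD)^{*}U_{\Ger}\big)\big|_{\La^2\coCom} = U_{\Ger}\big|_{\La^2\coCom} = U_{\Lie}$. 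Combining these identifications yields $\mT(\Psi \circ \exp(\cD)) = \big[\, U_{\Lie} \circ U^{\cD} \,\big] = \mT(\Psi) \circ U^{\cD}$, which is the desired equivariance. I expect the main obstacle to be precisely the verification that the transport of $\infty$-morphisms provided by Theorem~4.3 in \cite{Action} is natural with respect to restriction to the $\La^2\coCom$-coalgebra; it is this naturality, together with \eqref{holes}, that forces the $\La\Lie_{\infty}$-restriction of $\exp(\cD)^{*}U_{\Ger}$ to coincide with $U_{\Lie}$ on the nose rather than merely up to an uncontrolled homotopy.
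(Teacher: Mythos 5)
Your outline follows the same strategy as the paper (twist everything by $\exp(\cD)$, invoke the uniqueness part of Corollary \ref{cor:exists!}, restrict to $\La^2\coCom(V_A)$), but there are two genuine gaps at exactly the points where the content of the argument lives. First, the object $\exp(\cD)^{*}U_{\Ger}$ is not well defined for a single-colored cocycle $\cD \in \Der'(\Ger_{\infty})$: an $\infty$-morphism is not a map of operads, so an automorphism of $\Ger_{\infty}$ does not act on it. The correct device is the $2$-colored operad $\Cyl(\Ger^{\vee})$, whose derivations \emph{do} act on the triple (source structure, target structure, morphism) by precomposing the operad map $U_{\Cyl} : \Cyl(\Ger^{\vee}) \to \End_{V_A,\Cbu(A)}$ with $\exp(\cD_{\Cyl})$. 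What Theorem~4.3 of \cite{Action} actually gives is that the restrictions $\res_1,\res_2 : \Der'(\Cyl(\Ger^{\vee})) \to \Der'(\Ger_{\infty})$ are homotopic quasi-isomorphisms, so one can choose a cocycle $\cD_{\Cyl}$ with \emph{both} $\res_1(\cD_{\Cyl})$ and $\res_2(\cD_{\Cyl})$ representing the class $v$ --- but in general neither restriction equals your chosen $\cD$, only is cohomologous to it. The twisted morphism therefore lands in $\Cbu(A)^{\Psi\circ\exp(\res_2(\cD_{\Cyl}))}$ and its source is $V_A^{Q^{\exp(\res_1(\cD_{\Cyl}))}}$, and to conclude you must additionally invoke the independence of both actions on the choice of cocycle within a cohomology class (Proposition B.2 of \cite{Action} on the $\pi_0(\Ger_{\infty}\to\Bra)$ side and Proposition \ref{prop:U-cD-OK} on the $U^{\cD}$ side). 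Your proposal silently identifies these cocycles.

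Second, the ``decisive point'' that the $\La\Lie_{\infty}$-restriction of the twisted morphism coincides with $U_{\Lie}$ on the nose does \emph{not} follow from the vanishing \eqref{holes} alone. That vanishing controls only the two single-colored parts of $\cD_{\Cyl}$, i.e.\ the twisting of the source and target $\Ger_{\infty}$-structures; it says nothing about the mixed-color components of $\cD_{\Cyl}$, which are what deform the morphism itself. One needs the stronger statement that $\Der'\big(\Cyl(\La^2\coCom)\big)^{\le 0} = \bfzero$, which is Proposition \ref{prop:morph-Lie-OK} and requires a separate degree count on $2$-colored trees (the degree of a derivation equals the number of ``suspended'' generators appearing in its value, forcing it to be positive unless the tree is a single corolla). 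You correctly identify this as ``the main obstacle'' but leave it unproved; it is not a routine naturality check.
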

\begin{proof}
Following \cite[Section 3]{Action}, \cite{Fresse}, we will denote by 
$\Cyl(\Ger^{\vee})$ the $2$-colored dg operad
whose algebras are pairs $(V,W)$ with the data 
\begin{enumerate}
\item a $\Ger_{\infty}$-structure on $V$, 

\item a $\Ger_{\infty}$-structure on $W$, and 

\item a $\Ger_{\infty}$-morphism $F$ from $V$ to $W$, i.e. 
a homomorphism of corresponding dg $\Ger^{\vee}$-coalgebras 
$\Ger^{\vee}(V) \to \Ger^{\vee}(W)$.  
\end{enumerate}

In fact, if we forget about the differential, then 
the operad $\Cyl(\Ger^{\vee})$ is a free operad on a certain 
$2$-colored collection $\cM(\Ger^{\vee})$ naturally associated to $\Ger^{\vee}$. 

Let us denote by 
\begin{equation}
\label{Der-pr-Cyl}
\Der'(\Cyl(\Ger^{\vee})) 
\end{equation}
the dg Lie algebra of derivations $\cD$ of $\Cyl(\Ger^{\vee})$ subject to the 
condition\footnote{It is condition \eqref{prime-cond} which guarantees that 
any degree zero cocycle in $\Der'(\Cyl(\Ger^{\vee}))$ can be exponentiated to 
an automorphism of $\Cyl(\Ger^{\vee})$\,.}
\begin{equation}
\label{prime-cond}
p\circ \cD = 0\,,
\end{equation}
where $p$ is the canonical projection from $\Cyl(\Ger^{\vee})$ onto $\cM(\Ger^{\vee})$. 

The restrictions to the first color part and 
the second color part of  $\Cyl(\Ger^{\vee})$, respectively, 
give us natural maps of dg Lie algebras
\begin{equation}
\label{restrictions}
\res_1,~ \res_2 : \Der'(\Cyl(\Ger^{\vee})) \to \Der'(\Ger_{\infty}), 
\end{equation}
and, due to \cite[Theorem 4.3]{Action}, $\res_1$ and $\res_2$  are chain homotopic 
quasi-isomorphisms.  

Therefore, for every $v \in \mg$ there exists a degree zero cocycle
\begin{equation}
\label{cD-needed}
\cD \in  \Der'(\Cyl(\Ger^{\vee}))
\end{equation} 
such that both $\res_1(\cD)$ and $\res_2(\cD)$ represent the cohomology class $v$. 

Let 
\begin{equation}
\label{U-Ger-start}
U_{\Ger} : V_A \leadsto \Cbu(A)^{\Psi}
\end{equation}
be a $\Ger_{\infty}$-morphism from $V_A$ to $\Cbu(A)$
which restricts to a $\La\Lie_{\infty}$-morphism 
\begin{equation}
\label{U-Lie-start}
U_{\Lie} : V_A \to \Cbu(A)\,.
\end{equation}

The triple consisting of 
\begin{itemize}

\item the standard Gerstenhaber structure on $V_A$, 

\item the $\Ger_{\infty}$-structure on $\Cbu(A)$ coming from a map $\Psi$, and

\item $\Ger_{\infty}$-morphism \eqref{U-Ger-start}

\end{itemize}
gives us a map of dg operads 
\begin{equation}
\label{U-Cyl}
U_{\Cyl} : \Cyl(\Ger^{\vee}) \to \End_{V_A, \Cbu(A)}
\end{equation}
from $\Cyl(\Ger^{\vee})$ to the $2$-colored endomorphism 
operad $\End_{V_A, \Cbu(A)}$ of the pair $(V_A, \Cbu(A))$.
  
Precomposing $U_{\Cyl}$ with the endomorphism 
$$
\exp(\cD) : \Cyl(\Ger^{\vee})  \to \Cyl(\Ger^{\vee}) 
$$
we get another operad map 
\begin{equation}
\label{U-Cyl-exp-cD}
U_{\Cyl}  \circ \exp(\cD) : \Cyl(\Ger^{\vee}) \to \End_{V_A, \Cbu(A)}
\end{equation}
which corresponds to the triple consisting of 

\begin{itemize}

\item the new $\Ger_{\infty}$-structure $Q^{\exp(\res_1(\cD))}$ on 
$V_A$,

\item the $\Ger_{\infty}$-structure on $\Cbu(A)$ corresponding to 
$\Psi \circ \exp(\res_2(\cD))$, and 

\item a $\Ger_{\infty}$ quasi-isomorphism 
\begin{equation}
\label{U-Ger-new}
\wt{U}_{\Ger} : V_A^{Q^{ \exp(\res_1(\cD))} } \leadsto \Cbu(A)^{\Psi \, \circ\, \exp(\res_2(\cD))}
\end{equation}

\end{itemize}

Due to technical Proposition \ref{prop:morph-Lie-OK} proved in Appendix \ref{app:derivations} 
below, the restriction of the $\Ger_{\infty}$ quasi-isomorphism $\wt{U}_{\Ger}$ \eqref{U-Ger-new}
to $\La^2\coCom(V_A)$ gives us \und{the same} $\La\Lie_{\infty}$-morphism 
\eqref{U-Lie-start}. 

On the other hand, by Corollary 
\ref{cor:VA-VAQ} from Appendix \ref{app:VA-to-VA}, there exists a $\Ger_{\infty}$ quasi-isomorphism 
\begin{equation}
\label{corr-here}
U_{\corr} : V_A \to V_A^{Q^{ \exp(\res_1(\cD))} }
\end{equation}
from $V_A$ with the standard Gerstenhaber structure to $V_A$ with
the new $\Ger_{\infty}$-structure $Q^{\exp(\res_1(\cD))}$\,.
 
Thus, composing $U_{\corr}$ with $\wt{U}_{\Ger}$ \eqref{U-Ger-new}, 
we get a $\Ger_{\infty}$ quasi-isomorphism 
\begin{equation}
\label{U-Ger-final}
U^{\exp(\cD)}_{\Ger} : V_A \leadsto \Cbu(A)^{\Psi \, \circ\, \exp(\res_2(\cD))}
\end{equation}
from $V_A$ with the standard Gerstenhaber structure to $\Cbu(A)$ with
the $\Ger_{\infty}$-structure coming from $\Psi  \circ \exp(\res_2(\cD))$.  

The restriction of this $\Ger_{\infty}$-morphism $U^{\exp(\cD)}_{\Ger}$
to $\La^2 \coCom(V_A)$ gives us the $\La\Lie_{\infty}$-morphism 
\begin{equation}
\label{U-Lie-new}
U_{\Lie} \circ U^{\res_1(\cD)}
\end{equation}
where $U^{\res_1(\cD)}$ is the $\La\Lie_{\infty}$-automorphism 
of $V_A$ obtained by restricting \eqref{corr-here} to $\La^2 \coCom(V_A)$\,.

Since both cocycles $\res_1(\cD)$ and $\res_2(\cD)$ of $\Der'(\Ger_{\infty})$ 
represent the same cohomology class $v \in \mg$, Theorem \ref{thm:main} follows.
\end{proof}

\subsection{The proof of Proposition \ref{prop:U-cD-OK}}
\label{sec:U-cD-OK}

Let $\cD$ and $\wt{\cD}$ be two cohomologous cocycles in 
$\Der'(\Ger_{\infty})$ and let $Q^{\exp(\cD)}$, $Q^{\exp(\wt{\cD})}$
be $\Ger_{\infty}$-structures on $V_A$ corresponding to the operad maps
\begin{equation}
\label{Act-exp-cD}
\Act_{stan} \circ \exp(\cD)  : \Ger_{\infty} \to \End_{V_A}\,,
\end{equation}
\begin{equation}
\label{Act-exp-wt-cD}
\Act_{stan} \circ \exp(\wt{\cD}) : \Ger_{\infty} \to \End_{V_A}\,,
\end{equation}
respectively. Here $\Act_{stan}$ is the map $\Ger_{\infty} \to \End_{V_A}$
corresponding to the standard Gerstenhaber structure on $V_A$.

Due to Proposition B.2 in \cite{Action}, operad maps \eqref{Act-exp-cD}
and \eqref{Act-exp-wt-cD} are homotopy equivalent. Hence there exists 
a $\Ger_{\infty}$-structure $Q_t$ on $V_A \otimes  \Omb(\bbK)$ such that 
the evaluation maps 
\begin{equation}
\label{p0p1-VA}
\begin{array}{ccc}
p_0 : V_A \otimes  \Omb(\bbK)  \to V_A^{Q^{\exp(\cD)}} \,, &~& p_0(v) : = v \big|_{d t=0,~ t = 0}\,, \\
p_1 : V_A \otimes \Omega^\bullet(\bbK) \to V_A^{Q^{\exp(\wt{\cD})}} \,, &~&
p_1(v) : = v \big|_{d t=0,~ t = 1}\,.
\end{array}
\end{equation}
are strict quasi-isomorphisms of the corresponding $\Ger_{\infty}$-algebras. 

Furthermore, observation \eqref{holes} implies that the restriction 
of a homotopy connecting the automorphisms $\exp(\cD)$ and $\exp(\wt{\cD})$ of 
$\Ger_{\infty}$ to the suboperad $\La\Lie_{\infty}$ coincides with 
the identity map on  $\La\Lie_{\infty}$ for every $t$. Therefore, the 
$\La\Lie_{\infty}$-part of the $\Ger_{\infty}$-structure $Q_t$ on $V_A \otimes  \Omb(\bbK)$
coincides with the standard $\La\Lie$-structure given by the Schouten bracket. 
 
Since tensoring with $\Omb(\bbK)$ does not change cohomology, 
Corollary \ref{cor:exists!} from Appendix \ref{app:int-formal} implies that 
the canonical embedding $V_A \hookrightarrow V_A \otimes  \Omb(\bbK)$
can be extended to a $\Ger_{\infty}$ quasi-isomorphism 
\begin{equation}
\label{corr-homot}
U^{\mH}_{\corr} : V_A \leadsto  V_A \otimes  \Omb(\bbK)
\end{equation}
from $V_A$ with the standard Gerstenhaber structure to 
$V_A \otimes  \Omb(\bbK)$ with the $\Ger_{\infty}$-structure $Q_t$.

Since the $\La\Lie_{\infty}$-part of the $\Ger_{\infty}$-structure $Q_t$ on $V_A \otimes  \Omb(\bbK)$
coincides with the standard $\La\Lie$-structure given by the Schouten bracket, the restriction 
of $U^{\mH}_{\corr}$ onto $\La^2\coCom(V_A)$ gives us a homotopy 
connecting the $\La\Lie_{\infty}$-automorphisms 
\begin{equation}
\label{zero}
p_0 \circ U^{\mH}_{\corr} ~\Big|_{ \La^2\coCom(V_A)}~ : V_A \leadsto V_A
\end{equation}
and 
\begin{equation}
\label{one}
p_1 \circ U^{\mH}_{\corr} ~\Big|_{ \La^2\coCom(V_A)}~ : V_A \leadsto V_A\,.
\end{equation}

Due to the second part of Corollary \ref{cor:exists!},  $\La\Lie_{\infty}$-automorphism
\eqref{zero} is homotopy equivalent to $U^{\cD}$ and $\La\Lie_{\infty}$-automorphism 
\eqref{one} is homotopy equivalent to $U^{\wt{\cD}}$. 

Thus the homotopy type of $U^{\cD}$ is indeed independent of the representative $\cD$
of the cohomology class.  

To prove the second claim of Proposition \ref{prop:U-cD-OK}, 
we will need to use the 2-colored dg operad $\Cyl(\Ger^{\vee})$ 
recalled in the proof of Theorem \ref{thm:main} above. Moreover, we need 
\cite[Theorem 4.3]{Action} which implies that restrictions \eqref{restrictions} 
are homotopic quasi-isomorphisms of cochain complexes. 

Let $\cD_1$ and $\cD_2$ be degree zero cocycles in $\Der'(\Ger_{\infty})$
and let  $Q^{\exp(\cD_1)}$ be the $\Ger_{\infty}$-structure on $V_A$ which 
comes from the composition 
\begin{equation}
\label{Act-exp-cD-1}
\Act_{stan} \circ \exp(\cD_1)  : \Ger_{\infty} \to \End_{V_A}\,,
\end{equation}
where $\Act_{stan}$ denotes the map $\Ger_{\infty} \to \End_{V_A}$
corresponding to the standard Gerstenhaber structure on $V_A$.  

Let $U_{\Ger, 1}$ be a $\Ger_{\infty}$-quasi-isomorphism 
\begin{equation}
\label{U-Ger-1}
U_{\Ger, 1} : V_A \leadsto V_A^{Q^{\exp(\cD_1)} }\,,
\end{equation}
where the source is considered with the standard Gerstenhaber structure. 

By construction, the $\La\Lie_{\infty}$-automorphism 
$$
U^{\cD_1} : V_A \leadsto V_A
$$
is the restriction of $U_{\Ger, 1}$ onto $\La^2 \coCom(V_A)$. 

Let us denote by $U_{\Cyl}^{V_A}$ the operad map 
$$
U_{\Cyl}^{V_A} :  \Cyl(\Ger^{\vee}) \to \End_{V_A, V_A} 
$$
which corresponds to the triple: 
\begin{itemize}

\item the standard Gerstenhaber structure on the first copy of $V_A$, 

\item the $\Ger_{\infty}$-structure  $Q^{\exp(\cD_1)}$ on the second 
copy of $V_A$, and

\item the chosen $\Ger_{\infty}$ quasi-isomorphism in \eqref{U-Ger-1}. 

\end{itemize}

Due to \cite[Theorem 4.3]{Action}, there exists a degree zero cocycle 
$\cD_{\Cyl}$ in  $\Der'\big( \Cyl(\Ger^{\vee}) \big)$ for which the cocycles 
\begin{equation}
\label{cD-cD'}
\cD : =  \res_1 (\cD_{\Cyl}) \,, \qquad 
\cD' : = \res_2 (\cD_{\Cyl} )
\end{equation}
are both cohomologous to the given cocycle $\cD_2$. 

Precomposing the map $U_{\Cyl}^{V_A}$ with the automorphism 
$\exp(\cD_{\Cyl})$ we get a new  $\Cyl(\Ger^{\vee})$-algebra structure on 
the pair $(V_A, V_A)$ which corresponds to the triple
 
\begin{itemize}

\item the  $\Ger_{\infty}$-structure  $Q^{\exp(\cD)}$ on the first copy of $V_A$, 

\item the $\Ger_{\infty}$-structure  $Q^{\exp ( \CH(\cD_1, \cD')  )}$ on the second 
copy of $V_A$, and

\item a $\Ger_{\infty}$ quasi-isomorphism 

\begin{equation}
\label{wt-U-Ger}
\wt{U}_{\Ger} :  V_A^{Q^{\exp(\cD)}} \leadsto  V_A^{Q^{\exp ( \CH(\cD_1, \cD') )} }\,.
\end{equation}

\end{itemize}

Let us observe that, due to Proposition \ref{prop:morph-Lie-OK} from Appendix 
\ref{app:derivations}, the restriction of $\wt{U}_{\Ger} $ onto $\La^2 \coCom(V_A)$ 
coincides with the restriction of \eqref{U-Ger-1} onto $\La^2 \coCom(V_A)$. Hence, 
\begin{equation}
\label{wt-U-Ger-restr}
\wt{U}_{\Ger} \Big|_{\La^2 \coCom(V_A) } = U^{\cD_1}\,, 
\end{equation}
where $U^{\cD_1}$ is a $\La\Lie_{\infty}$-automorphism of $V_A$ 
corresponding\footnote{Strictly speaking, only the homotopy class of the $\La\Lie_{\infty}$-automorphism  $U^{\cD_1}$
is uniquely determined by $\cD_1$.} to $\cD_1$. 
 
Recall that there exists a $\Ger_{\infty}$ quasi-isomorphism 
\begin{equation}
\label{U-Ger-cD}
U_{\Ger} : V_A  \leadsto  V_A^{Q^{\exp(\cD)}}\,.
\end{equation}
where the source is considered with the standard Gerstenhaber 
structure. Furthermore, since $\cD$ is cohomologous to $\cD_2$, 
the first claim of Proposition \ref{prop:U-cD-OK} implies that 
the restriction of $U_{\Ger}$ onto $\La^2 \coCom(V_A)$ gives us 
a $\La\Lie_{\infty}$-automorphism $U^{\cD}$ of $V_A$ which is 
homotopy equivalent to $U^{\cD_2}$.

Let us also observe that the composition $\wt{U}_{\Ger} \circ U_{\Ger}$ gives 
us a $\Ger_{\infty}$ quasi-isomorphism 
\begin{equation}
\label{desired-comp}
\wt{U}_{\Ger} \circ U_{\Ger} : V_A \leadsto V_A^{Q^{\exp ( \CH(\cD_1, \cD') )} }
\end{equation}
 
Hence, the restriction of  $\wt{U}_{\Ger} \circ U_{\Ger}$ gives us a $\La\Lie_{\infty}$-automorphism 
of $V_A$ corresponding to $\CH(\cD_1, \cD')$. Due to \eqref{wt-U-Ger-restr}, this  $\La\Lie_{\infty}$-automorphism
coincides with 
$$
U^{\cD_1} \circ  U^{\cD}\,.
$$

Since $\cD$ and $\cD'$ are both cohomologous to $\cD_2$, the second claim of 
Proposition \ref{prop:U-cD-OK} follows.  \qed 

\begin{remark}
\label{rem:proof-in-Thomas}
The second claim of Proposition \ref{prop:U-cD-OK} can probably 
be deduced from \cite[Proposition 5.4]{Thomas} and some other 
statements in \cite{Thomas}. However, this would require a digression 
to ``stable setting'' which we avoid in this paper. For this reason, 
we decided to present a complete proof of Proposition \ref{prop:U-cD-OK}
which is independent of any intermediate steps in \cite{Thomas}. 
\end{remark}

\section{Final remarks: connecting Drinfeld associators to the set of homotopy classes  $\pi_0 \big(  V_A \leadsto \Cbu(A) \big)$}
\label{sec:link-to-DrAssoc}

In this section we recall how to construct a $\GRT_1$-equivariant map $\mB$ from the 
set $\DrAssoc_1$ of Drinfeld associators to the set 
$$
\pi_0 \big( \Ger_{\infty} \to \Bra \big)  
$$
of homotopy classes of operad morphisms \eqref{Psi} satisfying 
conditions \eqref{Psi-conditions}. 

Composing $\mB$ with the map $\mT$ \eqref{mT}, we 
get the desired map 
\begin{equation}
\label{mT-mB-here}
\mT \circ \mB :  \DrAssoc_{1} \to \pi_0 \big(  V_A \leadsto \Cbu(A) \big)  
\end{equation}
from the set $\DrAssoc_{1}$ to the set of homotopy classes of 
$\La\Lie_{\infty}$-morphisms from $V_A$ to $\Cbu(A)$ whose linear term 
is the Hochschild-Kostant-Rosenberg embedding.

Theorem \ref{thm:main} will then imply that map \eqref{mT-mB-here}
is $\GRT_1$-equivariant. 

\subsection{The sets $\DrAssoc_{\ka}$ of Drinfeld associators}  
\label{sec:DrAssoc}
In this short subsection, we briefly recall Drinfeld's associators and 
the Grothendieck-Teichmueller group $\GRT_1$\,.
For more details we refer the reader to \cite{AT}, \cite{Bar-Natan}, 
or \cite{Drinfeld}.

Let $m$ be an integer $\ge 2$. We denote by $\mt_m$ the Lie algebra  
generated by symbols $\{ t^{ij} = t^{ji} \}_{1 \le i \neq j \le m}$ subject to the following 
relations: 
\begin{multline}
\label{DK-relations} 
[t^{ij}, t^{ik} + t^{jk}] = 0 \qquad \textrm{for any triple of distinct indices } i,j,k\,,\\
[t^{ij}, t^{kl}] = 0 \qquad \textrm{for any quadruple of distinct indices } i,j,k,l\,.
\end{multline}
The notation $\sA^{\pb}_m$ is reserved for the associative algebra 
(over $\bbK$) of formal power series in noncommutative symbols $\{ t^{ij} = t^{ji} \}_{1 \le i \neq j \le m}$
subject to the same relations \eqref{DK-relations}. Let us recall \cite[Section 4]{Dima-disc} that 
the collection  $\sA^{\pb}  : = \{ \sA^{\pb}_m \}_{m \ge 1}$ with $ \sA^{\pb}_1 : = \bbK$ forms an operad in the category of 
associative $\bbK$-algebras.

Let $\lie(x,y)$ be the degree completion of the free Lie algebra in two symbols $x$ and $y$
and let $\ka$ be any element of $\bbK$. 

The set $\DrAssoc_{\ka}$ consists of elements $\Phi \in \exp \big(\lie(x,y) \big)$
which satisfy the equations 
\begin{equation}
\label{321-inverse}
\Phi(y,x) \Phi(x,y) = 1\,,
\end{equation}

\begin{equation}
\label{pentagon}
\Phi(t^{12}, t^{23} + t^{24}) \, \Phi(t^{13} + t^{23}, t^{34}) = \Phi( t^{23}, t^{34})\, \Phi(t^{12} + 
t^{13}, t^{24} + t^{34})\, \Phi(t^{12}, t^{23})\,, 
\end{equation}

\begin{equation}
\label{hexagon}
e^{\ka (t^{13} + t^{23})/2}  = \Phi(t^{13}, t^{12}) e^{ \ka  t^{13}/2} \Phi(t^{13}, t^{23})^{-1} 
e^{\ka t^{23}/2} \Phi(t^{12}, t^{23})\,,
\end{equation}
and 
\begin{equation}
\label{hexagon-inv}
e^{\ka (t^{12} + t^{13})/2}  = \Phi(t^{23}, t^{13})^{-1} e^{ \ka t^{13}/2} \Phi(t^{12}, t^{13})
e^{\ka t^{12} /2} \Phi(t^{12}, t^{23})^{-1}\,.
\end{equation}

For $\ka \neq 0$, elements $\Phi$ of  $\DrAssoc_{\ka}$ are called 
Drinfeld associators. However, for our purposes, we only need 
the set $\DrAssoc_{1}$ and the set $\DrAssoc_{0}$. 

According to \cite[Section 5]{Drinfeld}, the set 
\begin{equation}
\label{DrAssoc-0}
\DrAssoc_{0}
\end{equation}
forms a prounipotent group and, by \cite[Proposition 5.5]{Drinfeld},
this group acts simply transitively on the set of associators in $\DrAssoc_1$\,. 
Following \cite{Drinfeld}, we denote the group $\DrAssoc_{0}$ by $\GRT_1$.

\subsection{A map $\mB$ from $\DrAssoc_{1}$ to  $\pi_0 \big( \Ger_{\infty} \to \Bra \big)$}

Let us recall \cite{Bar-Natan}, \cite{Dima-disc} that collections of all braid groups can be 
assembled into the operad $\PaB$ in the category of $\bbK$-linear categories. 
Similarly, the collection of algebras $\{\sA^{\pb}_m \}_{m \ge 1}$
can be ``upgraded'' to the operad $\PaCD$ also in the category of $\bbK$-linear 
categories. Every associator $\Phi \in \DrAssoc_{1}$ gives us an isomorphism 
of these operads 
\begin{equation}
\label{I-Phi}
I_{\Phi} :  \PaB \stackrel{\cong}{ \longrightarrow } \PaCD\,.
\end{equation}
The group $\GRT_1$ acts on the operad $\PaCD$ in such a way that, 
for every pair $g \in \GRT_1, ~~\Phi \in  \DrAssoc_{1}$, the diagram 
\begin{equation}
\label{diag-PaB-PaCD}
\begin{tikzpicture}
\matrix (m) [matrix of math nodes, row sep=2.6em,  column sep=3em]
{ \PaB & \PaCD  \\
\PaB & \PaCD \\ };
\path[->,font=\scriptsize]
(m-1-1) edge  node[auto] {$ I_{\Phi} $}  (m-1-2) edge  node[auto] {$ \id $} (m-2-1)
(m-2-1) edge  node[auto] {$ I_{g(\Phi)} $}  (m-2-2) (m-1-2) edge  node[auto] {$g$} (m-2-2);
\end{tikzpicture}
\end{equation}
commutes. 

Applying to $\PaB$ and $\PaCD$ the functor $C_{-\bul}(~, \bbK)$, where 
$C_{\bul}(~, \bbK)$ denotes the Hochschild chain complex 
with coefficients in $\bbK$, we get dg operads
\begin{equation}
\label{C-PaB}
C_{-\bul}(\PaB, \bbK)
\end{equation}
and   
\begin{equation}
\label{C-PaCD}
C_{-\bul}(\PaCD, \bbK)\,.
\end{equation}

By naturality of $C_{-\bul}(~, \bbK)$, diagram \eqref{diag-PaB-PaCD} gives us 
the commutative diagram 
\begin{equation}
\label{diag-Cbd-PaB-PaCD}
\begin{tikzpicture}
\matrix (m) [matrix of math nodes, row sep=2.6em,  column sep=3em]
{ C_{-\bul}(\PaB, \bbK) & C_{-\bul}(\PaCD, \bbK)  \\
C_{-\bul}(\PaB, \bbK) & C_{-\bul}(\PaCD, \bbK), \\ };
\path[->,font=\scriptsize]
(m-1-1) edge  node[auto] {$ I_{\Phi} $}  (m-1-2) edge  node[auto] {$ \id $} (m-2-1)
(m-2-1) edge  node[auto] {$ I_{g(\Phi)} $}  (m-2-2) (m-1-2) edge  node[auto] {$g$} (m-2-2);
\end{tikzpicture}
\end{equation}
where, for simplicity, the maps corresponding to $I_{\Phi}$, $I_{g(\Phi)}$ and $g$ are 
denoted by the same letters, respectively. 
 
Recall that Eq. (5) from \cite{Dima-disc} gives us the canonical quasi-isomorphism 
from the operad $\Ger$ to $C_{-\bul}(\sA^{\pb}, \bbK)$. The latter operad, in turn, 
receives the natural map
$$
C_{-\bul}(\PaCD, \bbK) \to  C_{-\bul}(\sA^{\pb} , \bbK)
$$
from $C_{-\bul}(\PaCD, \bbK)$ which is also known to be a quasi-isomorphism. 

Thus, using the lifting property (see \cite[Corollary 5.8]{notes}) for maps from 
the operad $\Ger_{\infty} = \Cobar(\Ger^{\vee})$, we get the quasi-isomorphism\footnote{By the same lifting 
property (see \cite[Corollary 5.8]{notes}), we know that the homotopy type of the quasi-isomorphism
\eqref{Gerinf-to-PaCD} is uniquely determined by the operad map $\Ger \to C_{-\bul}(\sA^{\pb}, \bbK)$
from  \cite[Eq. (5)]{Dima-disc}.}  
\begin{equation}
\label{Gerinf-to-PaCD}
\Ger_{\infty} \stackrel{\sim}{~\longrightarrow~} C_{-\bul}(\PaCD, \bbK)\,.
\end{equation}

Using this quasi-isomorphism and \cite[Corollary 5.8]{notes}, 
one can construct (see \cite[Section 6.3.1]{Thomas}) a group homomorphism 
\begin{equation}
\label{GRT1-to-exp-mg}
\GRT_1 \to \exp(\mg)\,,
\end{equation}
where the Lie algebra $\mg$ is defined in \eqref{mg}. By \cite[Theorem 1.2]{Thomas}, 
homomorphism \eqref{GRT1-to-exp-mg} is an isomorphism. 

Any specific solution of Deligne's conjecture on the Hochschild complex  (see, for example, 
\cite{BF},  \cite{Swiss}, or \cite{M-Smith}) combined 
with Fiedorowicz's recognition principle \cite{Fied} provides us 
with a sequence of quasi-isomorphisms  
\begin{equation}
\label{PaB-to-Br}
\Bra \,\stackrel{\sim}{\leftarrow}\, \bullet  \, \stackrel{\sim}{\rightarrow}\, \bullet
 \, \stackrel{\sim}{\leftarrow} \bullet ~ \dots ~ \bullet \stackrel{\sim}{\rightarrow} \, C_{-\bul}(\PaB, \bbK)
\end{equation}
which connects the dg operad $\Bra$ to $C_{-\bul}(\PaB, \bbK)$\,.

Hence, every associator $\Phi \in  \DrAssoc_{1}$ gives us a sequence of 
quasi-isomorphisms
\begin{equation}
\label{master-seq}
\Bra  \,\stackrel{\sim}{\leftarrow}\, \bullet  \,  \stackrel{\sim}{\rightarrow}\, \bullet
 \, \stackrel{\sim}{\leftarrow} \bullet ~ \dots ~ \bullet \stackrel{\sim}{\rightarrow} \, C_{-\bul}(\PaB, \bbK)
 \stackrel{I_{\Phi}}{\longrightarrow}
C_{-\bul}(\PaCD, \bbK)  \stackrel{\sim}{~\longleftarrow~}  \Ger_{\infty}
\end{equation}
connecting the dg operads $\Bra$ to $\Ger_{\infty}$. 

Using \cite[Corollary 5.8]{notes} once again, we conclude that  
the sequence of quasi-isomorphisms \eqref{master-seq} determines a unique 
homotopy class of quasi-isomorphisms (of dg operads)
\begin{equation}
\label{Psi-from-Phi}
\Psi: \Ger_{\infty} \to \Bra\,. 
\end{equation}
Thus we get a well defined map 
\begin{equation}
\label{mB}
\mB : \DrAssoc_{1} \to \pi_0 \big( \Ger_{\infty} \to \Bra \big)\,.
\end{equation}

In view of isomorphism \eqref{GRT1-to-exp-mg}, the set 
of homotopy classes $\pi_0 \big( \Ger_{\infty} \to \Bra \big)$ is 
equipped with a natural action of $\GRT_1$. 
Moreover, the commutativity of diagram \eqref{diag-Cbd-PaB-PaCD}
implies that the map $\mB$ is $\GRT_1$-equivariant. 

Thus, combining this observation with Theorem \ref{thm:main} we 
deduce the following corollary:
\begin{cor}
\label{cor:GRT1-equivar}
Let $\pi_0 \big(  V_A \leadsto \Cbu(A) \big)$ be the set of homotopy classes 
of $\La\Lie_{\infty}$ quasi-isomorphisms which extend the Hochschild-Kostant-Rosenberg 
embedding of polyvector fields into Hochschild cochains. If we consider 
$\pi_0 \big(  V_A \leadsto \Cbu(A) \big)$ as a set with the $\GRT_1$-action induced 
by isomorphism \eqref{GRT1-to-exp-mg} then the composition
\begin{equation}
\label{mT-mB}
\mT \circ \mB :  \DrAssoc_{1} \to \pi_0 \big(  V_A \leadsto \Cbu(A) \big) 
\end{equation}
is $\GRT_1$-equivariant. \qed
\end{cor}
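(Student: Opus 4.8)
The plan is to write $\mT \circ \mB$ as a composite of two maps, each of which has \emph{already} been shown to be equivariant in the material above, and then to check that the single group $\GRT_1$ acts compatibly on the intermediate set $\pi_0\big( \Ger_{\infty} \to \Bra \big)$ from both sides. Equivariance of a composite of equivariant maps is formally automatic, so the entire content of the statement lies in this one compatibility; the substantive work is already discharged by Theorem \ref{thm:main} (via the cylinder operad $\Cyl(\Ger^{\vee})$) and by the geometric diagram chase establishing equivariance of $\mB$.

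First I would recall the two equivariances in play. On the one hand, the map $\mB$ of \eqref{mB} is $\GRT_1$-equivariant: this follows from the commutativity of diagram \eqref{diag-Cbd-PaB-PaCD}, the naturality of the functor $C_{-\bul}(-,\bbK)$, and the lifting property \cite[Corollary 5.8]{notes}, with the action on $\pi_0\big( \Ger_{\infty} \to \Bra \big)$ defined through the isomorphism \eqref{GRT1-to-exp-mg}. Thus $\mB(g\cdot \Phi)$ and $g \cdot \mB(\Phi)$ agree for every $g \in \GRT_1$ and $\Phi \in \DrAssoc_{1}$. On the other hand, Theorem \ref{thm:main} furnishes the $\exp(\mg)$-equivariance of Tamarkin's map $\mT$ of \eqref{mT}; transporting the $\exp(\mg)$-action along the isomorphism $\GRT_1 \cong \exp(\mg)$ of \eqref{GRT1-to-exp-mg} (an isomorphism by \cite[Theorem 1.2]{Thomas}), this reads as $\GRT_1$-equivariance of $\mT$ for exactly the action used to define the $\GRT_1$-structure on the target set.

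The single point that I would verify explicitly --- and which I regard as the only genuine (if mild) obstacle --- is that the $\GRT_1$-action on the common set $\pi_0\big( \Ger_{\infty} \to \Bra \big)$ entering the equivariance of $\mB$ is \emph{literally the same} as the $\exp(\mg)$-action by precomposition $\Psi \mapsto \Psi \circ \exp(\cD)$ entering Theorem \ref{thm:main}. This is precisely the content built into the construction of \eqref{GRT1-to-exp-mg} following \cite[Section 6.3.1]{Thomas}: the geometric $\GRT_1$-action inherited from $\PaCD$ corresponds, under \eqref{GRT1-to-exp-mg}, to the algebraic action by operadic automorphisms $\exp(\cD)$ of $\Ger_{\infty}$. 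Granting this identification, for all $g \in \GRT_1$ and $\Phi \in \DrAssoc_{1}$ one has
$$
\mT\big(\mB(g\cdot \Phi)\big) = \mT\big(g\cdot \mB(\Phi)\big) = g\cdot \mT\big(\mB(\Phi)\big),
$$
so that $\mT \circ \mB$ of \eqref{mT-mB} is $\GRT_1$-equivariant, as claimed.
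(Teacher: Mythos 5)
Your proposal is correct and follows essentially the same route as the paper: the authors likewise deduce the corollary by combining the $\GRT_1$-equivariance of $\mB$ (from the commutativity of diagram \eqref{diag-Cbd-PaB-PaCD} and the lifting property) with Theorem \ref{thm:main}, the two actions on the intermediate set $\pi_0\big(\Ger_{\infty}\to\Bra\big)$ being identified via the isomorphism \eqref{GRT1-to-exp-mg}. Your explicit flagging of that identification as the only point needing verification matches the paper's (terser) treatment.
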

\begin{remark}
\label{rem:solutions-Deligne}
Any sequence of quasi-isomorphisms of dg operads \eqref{PaB-to-Br} 
gives us an isomorphism between the objects corresponding to 
$C_{-\bul}(\PaB, \bbK)$ and $\Bra$ in the homotopy category of 
dg operads. However, there is no reason to expect that different solutions 
of the Deligne conjecture give the same isomorphisms from $C_{-\bul}(\PaB, \bbK)$ 
to $\Bra$ in the homotopy category. Hence the resulting 
composition in \eqref{mT-mB} may depend on the choice of 
a specific solution of Deligne's conjecture on the Hochschild complex. 
\end{remark}

\appendix

\section{Filtered $\LLie$-algebras}

Let $L$ be a cochain complex with the differential $\pa$. Recall that 
a  $\LLie$-structure on $L$ is a sequence of degree $1$ multi-brackets 
\begin{equation}
\label{multi-brack}
\{~, ~, \dots, ~ \}_{m}: S^m(L) \to L\,, \qquad m \ge 2 
\end{equation}
satisfying the relations 
\begin{multline}
\label{LLie-relations}
\pa \{v_1, v_2, \dots, v_m\} + 
\sum_{i=1}^m (-1)^{|v_1| + \dots + |v_{i-1}|} 
\{v_1,  \dots, v_{i-1},  \pa v_i, v_{i+1}, \dots, v_{m}\} \\
+ \sum_{k=2}^{m-1} 
\sum_{\si \in \Sh_{k, m-k} } 
(-1)^{\ve(\si; v_1, \dots, v_m)}
\{ \{v_{\si(1)}, \dots, v_{\si(k)} \}, v_{\si(k+1)}, \dots, v_{\si(m)} \} = 0\,,
\end{multline}
where $(-1)^{\ve(\si; v_1, \dots, v_m)}$ is the Koszul sign factor (see eq.  \eqref{ve-si-vvv}). 

We say that a $\LLie$-algebra $L$ is {\it filtered} if it is equipped with  
a complete descending filtration 
\begin{equation}
\label{filtr-L}
L = \cF_{1} L \supset \cF_{2} L \supset \cF_{3} L \supset \dots\,. 
\end{equation}

For such filtered $\LLie$-algebras we may define a Maurer-Cartan element as
a degree zero element $\al$ satisfying the equation 
\begin{equation}
\label{MC-eq}
\pa \al + \sum_{m \ge 2} \frac{1}{m!} \{\al, \al, \dots, \al\}_m   = 0\,.
\end{equation}
Note that this equation makes sense for any degree $0$ element 
$\al$ because $L= \cF_1 L$ and $L$ is complete with respect to filtration \eqref{filtr-L}. 
Let us denote by $\MC(L)$ the set of Maurer-Cartan elements of a filtered $\LLie$-algebra $L$. 

According to\footnote{A version of the Deligne-Getzler-Hinich $\infty$-groupoid for pro-nilpotent 
$\LLie$-algebras is introduced in \cite[Section 4]{Enhanced}.} 
\cite{Getzler}, the set $\MC(L)$ can be upgraded to an $\infty$-groupoid
$\mMC(L)$ (i.e. a simplicial set satisfying the Kan condition). To introduce the $\infty$-groupoid 
$\mMC(L)$,
we  denote by $\Omb(\sfDel_n)$ the dg commutative $\bbK$-algebra of polynomial forms 
\cite[Section 3]{Getzler} on the $n$-th geometric simplex $\sfDel_n$. Next, we declare 
that set of $n$-simplices of $\mMC(L)$ is 
\begin{equation}
\label{MC-n}
\MC \big( L \,\hat{\otimes}\, \Omb(\sfDel_n) \big)\,,
\end{equation}
where $L$ is considered with the topology coming from  
filtration \eqref{filtr-L} and $\Omb(\sfDel_n)$ is considered with the 
discrete topology. The structure of the simplicial set is induced from the 
structure of a simplicial set on the sequence $\{ \Omb(\sfDel_n)\}_{n \ge 0}$\,.

For example, $0$-cells of $\mMC(L)$ are precisely Maurer-Cartan elements of $L$
and $1$-cells are sums 
\begin{equation}
\label{1-cell-al}
\al' + d t \, \al'' \,, \qquad \al' \in L^0 \hotimes \bbK[t]\,, \qquad 
\al'' \in  L^{-1} \hotimes \bbK[t]
\end{equation}
satisfying the pair of equations
\begin{equation}
\label{MC-al-pr}
\pa \al' + \sum_{m \ge 2} \frac{1}{m!} \{\al', \al', \dots, \al'\}_m   = 0\,,
\end{equation}
\begin{equation}
\label{MC-al-prpr}
\frac{d}{d t} \al'  = \pa \al''   + \sum_{m \ge 1} \frac{1}{m!} \{\al', \al', \dots, \al',  \al'' \}_{m+1}\,.
\end{equation}

Thus, two $0$-cells $\al_0$, $\al_1$ of  $\mMC(L)$ (i.e. Maurer-Cartan elements of $L$) 
are isomorphic if there exists an element \eqref{1-cell-al} satisfying 
\eqref{MC-al-pr} and \eqref{MC-al-prpr} and such that 
\begin{equation}
\label{al-0-al-1}
\al_0 =  \al'  \Big|_{t = 0} \qquad 
\textrm{and} \qquad
\al_1 =  \al'  \Big|_{t = 1}\,.
\end{equation}
We say that a $1$-cell  \eqref{1-cell-al} connects $\al_0$ and $\al_1$. 

\subsection{A lemma on adjusting Maurer-Cartan elements}
\label{app:the-lemma}

Let $\al$ be a Maurer-Cartan element of a filtered $\LLie$-algebra
and $\xi$ be a degree $-1$ element in $\cF_n L$ for some 
integer $n \ge 1$.   

Let us consider the following sequence $\{\al'_k\}_{k \ge 0}$ of degree zero elements 
in $L \hotimes \bbK[t]$
\begin{equation}
\label{al-pr-seq}
\al'_0 : = \al\,, \qquad \al'_{k+1}(t) : = \al + 
\int_{0}^t d t_1 \Big( \pa \xi + \sum_{m \ge 1} \frac{1}{m!} 
\{\al'_k(t_1), \dots, \al'_k(t_1), \xi \}_{m+1} \Big)\,.
\end{equation}

Since $L$ is complete with respect to filtration \eqref{filtr-L}, the sequence 
$\{\al'_k\}_{k \ge 0}$ convergences to a (degree $0$) element $\al' \in  L \hotimes \bbK[t]$ which 
satisfies the integral equation
\begin{equation}
\label{al-pr-xi-int}
\al'(t)  = \al + 
\int_{0}^t d t_1 \Big( \pa \xi + \sum_{m \ge 1} \frac{1}{m!} 
\{\al'(t_1), \dots, \al'(t_1), \xi \}_{m+1} \Big)\,. 
\end{equation}

We claim that 
\begin{lemma}
\label{lem:isom-MC}
If, as above, $\xi$ is a degree $-1$ element in $\cF_n L$ and 
$\al'$ is an element of $L \hotimes \bbK[t]$ obtained by  
recursive procedure \eqref{al-pr-seq} then the sum 
\begin{equation}
\label{the-1-cell}
\al' + dt \, \xi
\end{equation}
is a $1$-cell of $\mMC(L)$ which connects $\al$ to 
another Maurer-Cartan element $\wt{\al}$ of $L$ 
such that
\begin{equation}
\label{mod-cF-n}
\al' - \al \in  \cF_{n} L \hotimes \bbK[t]\,,
\end{equation}
and 
\begin{equation}
\label{wtal-al-diff-xi}
\wt{\al} - \al - \pa \xi \in \cF_{n+1} L\,.
\end{equation}
If the element $\xi$ satisfies the additional condition 
\begin{equation}
\label{diff-xi-cFn1}
\pa \xi \in \cF_{n+1} L
\end{equation}
then 
\begin{equation}
\label{mod-cF-n1}
\al' - \al \in  \cF_{n+1} L \hotimes \bbK[t]\,,
\end{equation}
and 
\begin{equation}
\label{wtal-al-diff-xi-better}
\wt{\al} - \al - \pa \xi - \{\al, \xi\} \in \cF_{n+2} L\,.
\end{equation}
\end{lemma}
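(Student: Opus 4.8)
The plan is to verify the two defining equations of a $1$-cell, \eqref{MC-al-pr} and \eqref{MC-al-prpr}, for the sum $\al' + dt\,\xi$, and then to read off the filtration estimates \eqref{mod-cF-n}--\eqref{wtal-al-diff-xi-better} directly from the integral equation \eqref{al-pr-xi-int}. Equation \eqref{MC-al-prpr} (with $\al'' = \xi$) is immediate: differentiating \eqref{al-pr-xi-int} in $t$ gives $\frac{d}{dt}\al'(t) = \pa\xi + \sum_{m\ge 1}\frac{1}{m!}\{\al'(t),\dots,\al'(t),\xi\}_{m+1}$, which is precisely \eqref{MC-al-prpr}. The substance therefore lies in \eqref{MC-al-pr}, i.e.\ in showing that $\al'(t)$ is a Maurer--Cartan element for every $t$, so that $\wt{\al} := \al'|_{t=1}$ is again Maurer--Cartan (while $\al'|_{t=0}=\al$ since the integral over $[0,0]$ vanishes).

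To prove this I would introduce the Maurer--Cartan curvature $F(t) := \pa\al'(t) + \sum_{m\ge 2}\frac{1}{m!}\{\al'(t),\dots,\al'(t)\}_m$ and show $F \equiv 0$. Since $\al'(0) = \al$ is Maurer--Cartan, $F(0) = 0$. Because $\al'(t)$ and $\frac{d}{dt}\al'(t)$ are of degree $0$, graded symmetry of the brackets introduces no Koszul signs and collapses the $t$-derivative to $\frac{d}{dt}F(t) = \sum_{m\ge 1}\frac{1}{(m-1)!}\{\frac{d}{dt}\al'(t),\al'(t),\dots,\al'(t)\}_m$. Substituting the evolution equation \eqref{MC-al-prpr} for $\frac{d}{dt}\al'(t)$ and invoking the $\LLie$-relations \eqref{LLie-relations} in their form twisted by the degree-$0$ element $\al'(t)$ converts this into a \emph{linear homogeneous} evolution equation for $F(t)$, schematically $\frac{d}{dt}F(t) = \pm\sum_{k\ge 0}\frac{1}{k!}\{\al'(t),\dots,\al'(t),F(t),\xi\}_{k+2}$. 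Since $\xi \in \cF_n L$, every term on the right raises the filtration weight of $F(t)$ by at least $n$; integrating against the initial condition $F(0)=0$ and bootstrapping along the complete filtration \eqref{filtr-L} forces $F(t) \in \bigcap_j \cF_j L = 0$ for all $t$. This repackaging of \eqref{LLie-relations} as the curved-$\LLie$ identity $(\ell_1^{\al'})^2 = \pm\{F,-\}^{\al'}_2$, together with the bootstrap, is where the real work lies and is the main obstacle of the argument.

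With part (1) in hand, the filtration estimates follow by inspection of \eqref{al-pr-xi-int}, using the compatibility of the brackets with the filtration, namely $\{\cF_{i_1}L,\dots,\cF_{i_m}L\}\subseteq\cF_{i_1+\cdots+i_m}L$ and $\pa\cF_j L\subseteq\cF_j L$. The integrand equals $\pa\xi$ plus a sum of brackets, each containing the factor $\xi\in\cF_n L$ together with at least one factor $\al'\in\cF_1 L$; hence it lies in $\cF_n L\hotimes\bbK[t]$, giving \eqref{mod-cF-n}. For \eqref{wtal-al-diff-xi} one writes $\wt{\al} - \al - \pa\xi = \int_0^1\sum_{m\ge 1}\frac{1}{m!}\{\al'(s),\dots,\al'(s),\xi\}_{m+1}\,ds$, and each summand lies in $\cF_{m+n}L\subseteq\cF_{n+1}L$. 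Under the extra hypothesis \eqref{diff-xi-cFn1} the leftover $\pa\xi$ itself lies in $\cF_{n+1}L$, upgrading \eqref{mod-cF-n} to \eqref{mod-cF-n1}; then in $\wt{\al}-\al-\pa\xi$ only the $m=1$ bracket $\int_0^1\{\al'(s),\xi\}\,ds$ can escape $\cF_{n+2}L$, and replacing $\al'(s)$ by $\al$ is legitimate modulo $\cF_{n+2}L$ by \eqref{mod-cF-n1}, since $\{\cF_{n+1}L,\cF_n L\}\subseteq\cF_{2n+1}L\subseteq\cF_{n+2}L$ for $n\ge 1$. This leaves exactly $\{\al,\xi\}$, yielding \eqref{wtal-al-diff-xi-better}.
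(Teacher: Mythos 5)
Your proposal is correct and follows essentially the same route as the paper's own proof: your curvature $F(t)$ is exactly the element $\Xi$ of the paper, you derive the same linear homogeneous evolution equation $\tfrac{d}{dt}\Xi = -\sum_{m\ge 0}\tfrac{1}{m!}\{\al',\dots,\al',\Xi,\xi\}_{m+2}$, and you kill $\Xi$ by the same bootstrap along the complete filtration using the factor $\xi\in\cF_n L$ and the initial condition $\Xi(0)=0$. The filtration estimates are then read off from \eqref{al-pr-xi-int} and \eqref{wt-al} exactly as in the paper.
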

\begin{proof} Equation \eqref{al-pr-xi-int} implies that 
$\al'$ satisfies the differential equation
\begin{equation}
\label{al-pr-xi}
\frac{d}{dt} \al' =  \pa \xi + \sum_{m \ge 1} \frac{1}{m!} 
\{\al', \dots, \al', \xi \}_{m+1} 
\end{equation}
with the initial condition 
\begin{equation}
\label{al-pr-initial}
\al' \Big|_{t = 0} = \al\,. 
\end{equation}

Let us denote by $\Xi$ the following degree $1$ element of $L \hotimes \bbK[t]$
\begin{equation}
\label{Xi}
\Xi : = \pa \al'  + \sum_{m \ge 2} \frac{1}{m!} \{\al', \al', \dots, \al'\}_m\,.  
\end{equation}

A direct computation shows that $\Xi$ satisfies the following differential 
equation
\begin{equation}
\label{diff-eq-Xi}
\frac{d}{d t} \Xi = - \sum_{m \ge 0} \frac{1}{m!} \{\al', \dots, \al', \Xi, \xi\}_{m+2}\,.
\end{equation}
 
Furthermore, since $\al$ is a Maurer-Cartan element of $L$, the element $\Xi$ satisfies 
the condition 
$$
\Xi \Big |_{t=0} = 0
$$
and hence $\Xi$ satisfies the integral equation
\begin{equation}
\label{int-eq-Xi}
\Xi (t) = - \int_0^t d t_1 \Big( 
 \sum_{m \ge 0} \frac{1}{m!} \{\al'(t_1), \dots, \al'(t_1), \Xi(t_1), \xi\}_{m+2}
\Big)\,.
\end{equation}

Equation \eqref{int-eq-Xi} implies that 
$$
\Xi \in \bigcap_{n \ge 1} \cF_n L \hotimes \bbK[t]\,.
$$

Therefore $\Xi = 0$ and hence the limiting element $\al'$
of sequence \eqref{al-pr-seq} is a Maurer-Cartan element of $L \hotimes \bbK[t]$\,.

Combining this observation with differential equation \eqref{al-pr-xi}, we conclude 
that the element $\al' + dt \, \xi \in L \hotimes \Omb(\sfDel_1)$ is indeed a $1$-cell in 
$\mMC(L)$ which connects the Maurer-Cartan element $\al$ to the  Maurer-Cartan element 
\begin{equation}
\label{wt-al}
\wt{\al} : = \al + \int_{0}^1 d t \Big( \pa \xi + \sum_{m \ge 1} \frac{1}{m!} 
\{\al'(t), \dots, \al'(t), \xi \}_{m+1} \Big)\,. 
\end{equation}

Since $\xi \in \cF_n L$ and $L = \cF_1 L$, equation \eqref{al-pr-xi-int} implies that 
$$
\al' - \al  \in  \cF_{n}  L \hotimes \bbK[t]
$$
and equation \eqref{wt-al} implies that 
$$
\wt{\al} - \al - \pa \xi \in \cF_{n+1} L\,.
$$
Thus, the first part of Lemma \ref{lem:isom-MC} is proved.

If $\xi \in  \cF_{n} L$ and $\pa \xi \in \cF_{n+1} L$ then, again, 
it is clear from \eqref{al-pr-xi-int} that inclusion \eqref{mod-cF-n1}
holds.

Finally, using inclusion \eqref{mod-cF-n1} and 
equation \eqref{wt-al}, it is easy to see that 
$$
\wt{\al} - \al - \pa \xi - \{\al, \xi\} \in \cF_{n+2} L\,.
$$

Lemma \ref{lem:isom-MC} is proved. 
\end{proof}

\subsection{Convolution $\LLie$-algebra, $\infty$-morphisms and their homotopies}
\label{app:conv}

Let $\cC$ be a coaugmented cooperad (in the category of graded vector spaces)
satisfying the additional condition 
\begin{equation}
\label{cC-reduced}
\cC(0) = \bfzero 
\end{equation}
and $V$ be a cochain complex. (In this paper, $\cC$ is usually the cooperad $\Ger^{\vee}$.)  

Following \cite{DHR}, we say that $V$ is a homotopy algebra of type $\cC$
if $V$ carries $\Cobar(\cC)$-algebra structure or equivalently the $\cC$-coalgebra 
$$
\cC(V)
$$
has a degree $1$ coderivation $Q$ satisfying 
$$
Q \Big|_V = 0
$$
and the  Maurer-Cartan equation 
$$
[d_V, Q] + \frac{1}{2}[Q,Q] = 0
$$
where $d_V$ is the differential on $\cC(V)$ induced from the one on $V$. 

For two homotopy algebras $(V, Q_V)$ and $(W, Q_W)$ of type $\cC$, we consider 
the graded vector space
\begin{equation}
\label{Map-V-W}
\Hom(\cC(V), W)
\end{equation}
with the differential $\pa$
\begin{equation}
\label{diff-Map-VW}
\pa(f) : = d_W \circ f - (-1)^{|f|} f \circ (d_V + Q_V) 
\end{equation}
and the multi-brackets (of degree $1$)
$$
\{~,~, \dots, ~ \}_m : S^m \big( \Hom(\cC(V), W) \big) \to \Hom(\cC(V), W)\,, \qquad m \ge 2
$$  
\begin{equation}
\label{m-bracket}
\{f_1, \dots, f_m\} (X) = 
p_W \circ Q_W \big( 1 \otimes f_{1}\otimes \dots \otimes f_{m}  
(\D_m (X))  \big)\,, 
\end{equation}
where $\D_m$ is the $m$-th component of the comultiplication 
$$
\D_m : \cC(V) \to \Big( \cC(m) \otimes \cC(V)^{\otimes\, m} \Big)^{S_m}
$$
and $p_W$ is the canonical projection 
$$
p_W : \cC(W) \to W\,.
$$
 
According to \cite{DHR} or \cite[Section 1.3]{3Tales},  equation \eqref{m-bracket} define 
a $\La^{-1}\Lie_{\infty}$-structure on the cochain complex $\Hom(\cC(V), W)$ with the 
differential $\pa$ \eqref{diff-Map-VW}. The $\LLie$-algebra  
\begin{equation}
\label{Hom-cC-V-W}
\Hom(\cC(V), W)
\end{equation}
is called the {\it convolution  $\LLie$-algebra} of the pair $V, W$. 

The convolution $\LLie$-algebra  $\Hom(\cC(V), W)$ carries the obvious 
descending filtration ``by arity''
\begin{equation} 
\label{filtr-Map-VW}
\cF_{n} \Hom(\cC(V), W)  ~ = ~ \{ f  \in \Hom(\cC(V), W) ~\vert ~ 
f \big|_{\cC(m) \otimes_{S_m} V^{\otimes \, m}} =0
~~ \forall ~ m < n  \}.
\end{equation}

$\Hom(\cC(V), W)$ is obviously complete with respect to this 
filtration and 
\begin{equation}
\label{Map-VW-is-cF_1}
\Hom(\cC(V), W) = \cF_{1} \Hom(\cC(V), W)
\end{equation}
due to condition \eqref{cC-reduced}. In other words, under our assumption 
on the cooperad $\cC$, the convolution $\LLie$-algebra  $\Hom(\cC(V), W)$  
is pronilpotent. 

According to \cite[Proposition 3]{3Tales}, $\infty$-morphisms from $V$ to $W$ 
are in bijection with Maurer-Cartan elements of  $\Hom(\cC(V), W)$ i.e. $0$-cells of 
the Deligne-Getzler-Hinich $\infty$-groupoid corresponding to  $\Hom(\cC(V), W)$.  
Furthermore, due to \cite[Corollary 2]{3Tales}, two $\infty$-morphisms
from $V$ to $W$ are homotopic if and only if the corresponding  Maurer-Cartan elements
are isomorphic $0$-cells in the Deligne-Getzler-Hinich $\infty$-groupoid 
of $\Hom(\cC(V), W)$. 

\section{Tamarkin's rigidity}
\label{app:rigid}

Let $V_A$ denote the Gerstenhaber algebra of polyvector fields 
on the graded affine space corresponding to 
$A= \bbK[x^1, x^2, \dots, x^d]$ with 
$$
| x^i | = t_i\,.
$$

As the graded commutative algebra over $\bbK$, 
$V_A$ is freely generated by variables 
$$
x^1, x^2, \dots, x^d, \te_1, \te_2, \dots, \te_d,
$$ 
where $\te_i$ carries degree $1 - t_i$\,. 
\begin{equation}
\label{V-A}
V_A = \bbK [x^1, x^2, \dots, x^d, \te_1, \te_2, \dots, \te_d]\,.
\end{equation}
Let us denote by $\mu_{\wedge}$ and $\mu_{\{~,~\}}$ the vectors 
in $\End_{V_A}(2)$ corresponding to the multiplication 
and the Schouten bracket $\{~,~ \}$ on $V_A$, respectively. 

The composition of the canonical quasi-isomorphism 
$$
\Cobar(\Ger^{\vee}) \to \Ger
$$
and the map $\Ger \to \End_{V_A}$ corresponds to the 
following  Maurer-Cartan element 
\begin{equation}
\label{al}
\al : = \mu_{\wedge} \otimes \{b_1, b_2\} + \mu_{\{~,~\}} \otimes b_1 b_2
\end{equation}
in the graded Lie algebra
\begin{equation}
\label{Conv-End-VA}
\Conv^{\oplus} (\Ger^{\vee}, \End_{V_A}) : = 
\bigoplus_{n \ge 1} \Hom_{S_n} \left(\Ger^{\vee}(n),  \End_{V_A}(n) \right) 
\end{equation}
for which we frequently use the obvious 
identification\footnote{Recall that the cooperad $\Ger^{\vee}$ is the linear 
dual of the operad $\La^{-2} \Ger$\,.} 
\begin{equation}
\label{Conv-is-otimes}
\Conv^{\oplus} (\Ger^{\vee}, \End_{V_A}) \cong
\bigoplus_{n \ge 1} 
\left( \End_{V_A}(n) \otimes \La^{-2} \Ger(n) \right)^{S_n}\,. 
\end{equation}

In this section, we consider $\Conv^{\oplus} (\Ger^{\vee}, \End_{V_A})$ as 
the cochain complex with the following differential  
\begin{equation}
\label{diff-Conv}
\pa : = [\al, ~]\,.
\end{equation}

We observe that  $\Conv^{\oplus} (\Ger^{\vee}, \End_{V_A})$ carries the 
natural descending filtration ``by arity'': 
$$
\Conv^{\oplus} (\Ger^{\vee}, \End_{V_A}) = \cF_0 \Conv^{\oplus} (\Ger^{\vee}, \End_{V_A}) 
\supset  \cF_1 \Conv^{\oplus} (\Ger^{\vee}, \End_{V_A}) \supset \dots
$$
\begin{equation}
\label{filtr-arity}
\cF_m \Conv^{\oplus} (\Ger^{\vee}, \End_{V_A}) : =  
\bigoplus_{n \ge m+1} 
\left( \End_{V_A}(n) \otimes \La^{-2} \Ger(n) \right)^{S_n} \,.
\end{equation}

More precisely, 
\begin{equation}
\label{diff-n-to-n1}
\pa \left( \End_{V_A}(n) \otimes \La^{-2} \Ger(n) \right)^{S_n} ~ \subset ~ 
\left( \End_{V_A}(n+1) \otimes \La^{-2} \Ger(n+1) \right)^{S_{n+1}}\,.
\end{equation}
In particular, every cocycle  $X \in \Conv^{\oplus} (\Ger^{\vee}, \End_{V_A})$ is 
a finite sum
\begin{equation}
\label{X-sum}
X = \sum_{n \ge 1} X_n\,, \qquad X_n \in  \left( \End_{V_A}(n) \otimes \La^{-2} \Ger(n) \right)^{S_n}
\end{equation}
where each individual term $X_n$ is a cocycle.  
 
In this paper, we need the following version of Tamarkin's rigidity
\begin{thm}
\label{thm:rigidity}
If $n$ is an integer $\ge 2$ then for every cocycle 
$$
X ~\in~  \left( \End_{V_A}(n) \otimes \La^{-2} \Ger(n) \right)^{S_n} ~\subset~  \Conv^{\oplus} (\Ger^{\vee}, \End_{V_A})
$$
there exists a cochain $Y \in \left( \End_{V_A}(n-1) \otimes \La^{-2} \Ger(n-1) \right)^{S_{n-1}}$ such that
$$
X= \pa Y\,.
$$ 
\end{thm}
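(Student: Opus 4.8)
The plan is to read the complex $\big(\Conv^{\oplus}(\Ger^{\vee}, \End_{V_A}),\ \pa = [\al,-]\big)$ as the operadic (Gerstenhaber) deformation complex of the Gerstenhaber algebra $V_A$ with coefficients in itself, graded by arity, and to prove that its cohomology vanishes in every arity $\ge 2$. By \eqref{diff-n-to-n1} the differential raises arity by exactly one, and by \eqref{X-sum} every cocycle splits as a finite sum of arity-homogeneous cocycles; hence vanishing of the arity-$n$ cohomology for $n \ge 2$ is \emph{exactly} the assertion that a cocycle $X \in \big(\End_{V_A}(n) \otimes \La^{-2}\Ger(n)\big)^{S_n}$ equals $\pa Y$ for some $Y$ of arity $n-1$. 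The structural input I would exploit is that, by \eqref{V-A}, $V_A$ is the \emph{free} graded-commutative algebra on the finite-dimensional space $W$ spanned by $x^1,\dots,x^d,\te_1,\dots,\te_d$, and that the Schouten bracket is the \emph{constant} Poisson bracket determined by the nondegenerate pairing $\{x^i, \te_j\} = \de^i_j$; that is, $V_A = S(W)$ with $W$ a symplectic graded vector space. Rigidity will be extracted from two classical acyclicity statements for such $V_A$: smoothness (no higher commutative cohomology) and the Poincaré lemma for the constant symplectic bracket.

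First I would split the differential as $\pa = \pa_{\wedge} + \pa_{\{~,~\}}$ according to the two summands $\mu_{\wedge}\otimes\{b_1,b_2\}$ and $\mu_{\{~,~\}}\otimes b_1 b_2$ of the Maurer--Cartan element $\al$ in \eqref{al}. Decomposing the identity $[\al,\al]=0$ by type gives $\pa_{\wedge}^2 = \pa_{\{~,~\}}^2 = 0$ and $\pa_{\wedge}\pa_{\{~,~\}} + \pa_{\{~,~\}}\pa_{\wedge} = 0$, so $\Conv^{\oplus}(\Ger^{\vee}, \End_{V_A})$ becomes a bicomplex, bigraded by the number of commutative products and the number of brackets recorded by the block decomposition \eqref{sp-partition} of the $\La^{-2}\Ger$-part, with $\pa_{\wedge}$ and $\pa_{\{~,~\}}$ raising these two indices separately. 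Since the total arity is one more than the sum of the two indices, this is a first-quadrant bicomplex and each of its spectral sequences converges.

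Running the spectral sequence in which $\pa_{\wedge}$ is taken first, the $\pa_{\wedge}$-page is the Harrison (André--Quillen) cohomology of $V_A$ with coefficients in the $V_A$-modules packaged by $\End_{V_A}$. Because $V_A$ is a polynomial algebra, its cotangent complex is the free module $\Omega_{V_A}$ in degree zero, so this cohomology is concentrated in the derivation degree and collapses the commutative direction onto polynomial (poly)vector fields --- the HKR-type phenomenon underlying \eqref{HKR}. On the next page the surviving differential is the Chevalley--Eilenberg differential of the Schouten Lie bracket acting on these polyvector fields; contraction with the symplectic pairing identifies it with the algebraic de Rham complex of the affine space $W$, whose cohomology vanishes in positive degree by the polynomial Poincaré lemma. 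The joint effect is that the abutment is concentrated in arity one (where it recovers the Gerstenhaber derivations of $V_A$, i.e. the symplectic vector fields), so that every cohomology class of arity $\ge 2$ dies, which is the claim.

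The conceptual skeleton above --- the bicomplex structure, convergence, and the immediate passage from cohomology vanishing at arity $n$ to a primitive $Y$ of arity $n-1$ --- is formal. The real work, and the step I expect to be the main obstacle, is to carry out the two classical identifications \emph{$S_n$-equivariantly and with the correct Koszul signs}: that the $\pa_{\wedge}$-associated graded is literally the Harrison complex of the smooth algebra $V_A$ (so that smoothness applies), and that the residual $\pa_{\{~,~\}}$-complex is literally the Lichnerowicz/symplectic de Rham complex of $W$ (so that the Poincaré lemma applies). Both reduce to a careful bookkeeping of how the cooperadic structure of $\Ger^{\vee}$ distributes the product $\mu_{\wedge}$ and the bracket $\mu_{\{~,~\}}$ across the block basis \eqref{La-2Ger-n-basis}.
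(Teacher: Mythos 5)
Your proposal is correct and follows essentially the same route as the paper's proof (which in turn follows Hinich): filter so that the Harrison-type differential $[\mu_{\wedge}\otimes\{b_1,b_2\},-]$ acts first, use freeness of $V_A$ to collapse that direction onto polyvector fields, and then kill the residual constant-coefficient Lichnerowicz/de Rham differential by the polynomial Poincar\'e lemma. The only difference is presentational: you organize this as the spectral sequence of a bicomplex, whereas the paper packages the same computation as an explicit injective chain map $\VH$ from the de Rham complex $\bs^{-2}\,\cF_0\Om^{\bul}_{\bbK}V_A$ shown to be a quasi-isomorphism via the filtration $\cG^{\bul}$ by the number of Lie brackets.
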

\begin{remark}
\label{rem:versus-stable}
Note that the above statement is different from Tamarkin's rigidity 
in the ``stable setting'' \cite[Section 12]{notes}. 
According to \cite[Corollary 12.2]{notes}, one may think that the vector 
$$
 \mu_{\{~,~\}} \otimes b_1 b_2
$$
is a non-trivial cocycle in \eqref{Conv-End-VA}. In fact, 
$$
 \mu_{\{~,~\}} \otimes b_1 b_2 = [\al, P \otimes b_1 ]\,,
$$
where $P$ is the following version of the ``Euler derivation'' of $V_A$. 
$$
P(v) : = \sum_{i =1}^d \te_i \frac{\pa}{\pa \te_i}\,.
$$
\end{remark}
\begin{proof}[~of Theorem \ref{thm:rigidity}]
Theorem \ref{thm:rigidity} is only a slight generalization of the statement proved 
in Section 5.4 of \cite{Hinich} and, in the proof given here, we pretty much 
follow the same line of arguments as in \cite[Section 5.4]{Hinich}.

First, we introduce an additional set of auxiliary variables
\begin{equation}
\label{new-variables}
\ck{x}_1, \check{x}_2, \dots, \check{x}_d,~ \check{\te}^1, \check{\te}^2, \dots,
\check{\te}^d      
\end{equation}
of degrees 
$$
| \ck{x}_i | = 2- t_i \,, \qquad |\ck{\te}^i | = t_i + 1\,.
$$

Second, we consider the de Rham complex of $V_A$: 
\begin{equation}
\label{de-Rham-VA}
\Om^{\bul}_{\bbK} V_A : = V_A[\ck{x}_1, \ck{x}_2, \dots, \ck{x}_d, \ck{\te}_1,  \ck{\te}_2, \dots,  \ck{\te}_d]
\end{equation}
with the differential 
\begin{equation}
\label{de-Rham}
D = \sum_{i=1}^d  \ck{x}_i \frac{\pa}{\pa \te_i} +   \sum_{i=1}^d  \ck{\te}^i \frac{\pa}{\pa x^i}
\end{equation}
and equip it with the following descending filtration: 
\begin{multline}
\label{filtr-de-Rham}
\cF_m  \Om^{\bul}_{\bbK} V_A   : = 
\Big\{  P \in V_A[\ck{x}_1, \ck{x}_2, \dots, \ck{x}_d, \ck{\te}_1,  \ck{\te}_2, \dots,  \ck{\te}_d] 
\\ \big|~~
\textrm{the total degree of }P\textrm{ in }\ck{x}_1,  \dots, \ck{x}_d, \ck{\te}_1,  \dots,  \ck{\te}_d
\textrm{ is } \ge m+1
\Big\}\,.
\end{multline}

Next, we observe that every homogeneous vector\footnote{Summation over repeated indices is assumed.} 
$$
P = P^{i_1 i_2 \dots i_{k}}_{j_1 j_2 \dots j_q} \,  \ck{x}_{i_1}  \dots \ck{x}_{i_k}
 \ck{\te}^{j_1}  \dots \ck{\te}^{j_q} \in V_A[\ck{x}_1, \ck{x}_2, \dots, \ck{x}_d, \ck{\te}_1,  \ck{\te}_2, \dots,  \ck{\te}_d]
$$
defines an element $P^{\End} \in  \End_{V_A}(k+q)$: 
\begin{multline}
\label{P-End}
P^{\End} (v_1, v_2, \dots, v_{k+q}) : = \\
 \sum_{\si \in S_{k+q} } \pm P^{i_1 i_2 \dots i_{k}}_{j_1 j_2 \dots j_q}
\pa_{ x^{i_1} } v_{\si(1)}\, \pa_{ x^{i_2} } v_{\si(2)}\,  \dots  \pa_{ x^{i_k} } v_{\si(k)}\, \\
\pa_{ \te_{j_1} } v_{\si(k+1)} \,    \pa_{ \te_{j_2} } v_{\si(k+2)} \, \dots  
\pa_{ \te_{j_q} } v_{\si(k+q)}\,,
\end{multline}
where the sign factors $\pm$ are determined by the usual Koszul rule. 

Finally, we claim that the formula
\begin{equation}
\label{VH-dfn}
\VH(P) : = P^{\End} \otimes b_1 b_2 \dots b_{k+q}
\end{equation}
defines a degree zero injective map 
\begin{equation}
\label{VH}
\VH : \bs^{-2}\, \cF_0 \Om^{\bul}_{\bbK} V_A ~\to~
\Conv^{\oplus} (\Ger^{\vee}, \End_{V_A})
\end{equation}
which is compatible with filtrations \eqref{filtr-arity} and \eqref{filtr-de-Rham}. 

A direct computation shows that $\VH$ intertwines differentials
\eqref{diff-Conv} and \eqref{de-Rham}. 

Let $m$ be an integer and  
\begin{equation}
\label{cG-m}
\cG^m \Conv^{\oplus} (\Ger^{\vee}, \End_{V_A}) 
\end{equation}
be the subspace of  $\Conv^{\oplus} (\Ger^{\vee}, \End_{V_A})$ of sums 
\begin{equation}
\label{sums}
\sum_i M_i \otimes q_i ~ \in ~ \bigoplus_{n \ge 1} 
\big( \End_{V_A}(n) \otimes \La^{-2}\Ger(n) \big)^{S_n}  
\end{equation}
satisfying the condition 
\begin{equation}
\label{Lie-brack-deg-le-m}
\textrm{the number of Lie brackets in } q_i ~ - ~ |\, M_i \otimes q_i\, |  ~\le~  m\,.
\end{equation}

It is easy to see that the sequence of subspaces \eqref{cG-m}
$$
\dots \subset
\cG^{-1}  \Conv^{\oplus} (\Ger^{\vee}, \End_{V_A}) \subset 
\cG^{0}  \Conv^{\oplus} (\Ger^{\vee}, \End_{V_A}) \subset
\cG^{1}  \Conv^{\oplus} (\Ger^{\vee}, \End_{V_A}) \subset \dots
$$
form an ascending filtration on the cochain complex 
$\Conv^{\oplus} (\Ger^{\vee}, \End_{V_A})$ and the associated 
graded cochain complex 
\begin{equation}
\label{Gr-cG}
\Gr_{\cG} \Conv^{\oplus} (\Ger^{\vee}, \End_{V_A}) 
\end{equation}
is isomorphic to 
$$
\bigoplus_{n \ge 1} \big( \End_{V_A}(n) \otimes \La^{-2}\Ger(n) \big)^{S_n} 
$$ 
with the differential 
\begin{equation}
\label{diff-Gr}
\pa^{\Gr} = [\, \mu_{\wedge} \otimes \{b_1, b_2\}, ~~ ]\,, 
\end{equation}
where $\mu_{\wedge}$ is the vector in  $\End_{V_A}(2)$ which 
corresponds to the multiplication on $V_A$. 
 
Let us observe that \eqref{Gr-cG} is naturally a $V_A$-module
(where $V_A$ is viewed as the graded commutative algebra), 
differential \eqref{diff-Gr} is $V_A$-linear, and since 
$$
\Ger^{\vee}(V_A) = \La^2 \coCom (\La\coLie(V_A))\,,
$$
cochain complex \eqref{Gr-cG} is isomorphic to 
\begin{equation}
\label{Gr-cG-simpler}
\Hom_{V_A}\big( \bs^2 \und{S}_{V_A} (\bsi V_A \otimes_{\bbK} \coLie (\bsi V_A) ), V_A \big)
\end{equation}
with the differential coming from the one on the Harrison 
homological\footnote{The cochain complex in \eqref{Harr-VA} is obtained from  
the conventional Harrison homological complex from \cite[Section 4.2.10]{Loday} 
by reversing the grading.} 
complex \cite[Section 4.2.10]{Loday}
\begin{equation}
\label{Harr-VA}
V_A \otimes_{\bbK} \coLie (\bsi V_A)
\end{equation}
of the graded commutative algebra $V_A$ with coefficients in $V_A$. 

Since $V_A$ is freely generated by elements 
$x^1, \dots, x^d, \te_1, \dots, \te_d$,  Theorem 3.5.6 and 
Proposition 4.2.11 from \cite{Loday} imply that 
the embedding 
\begin{equation}
\label{I-Harr}
I_{\Harr} ~ : ~\bigoplus_{i=1}^d V_A  e^i ~ \oplus ~
 \bigoplus_{i=1}^d V_A  f_i 
~\to~ V_A \otimes \coLie(\bsi V_A)  
\end{equation}
$$
I_{\Harr}(e^i) : = 1 \otimes \bsi x^i\,, \qquad 
I_{\Harr}(f_i) : = 1 \otimes \bsi \te_i
$$
from the free $V_A$-module 
\begin{equation}
\label{free-dude}
\bigoplus_{i=1}^d V_A  e^i ~ \oplus ~
 \bigoplus_{i=1}^d V_A  f_i\,, \qquad  |e^i| := t_i-1, \qquad  |f_i| : = -t_i
\end{equation}
is a quasi-isomorphism of cochain complexes of $V_A$-modules
from \eqref{free-dude} with the zero differential to \eqref{Harr-VA} with 
the Harrison differential.

Since \eqref{I-Harr} is a quasi-isomorphism of cochain complexes of 
free $V_A$-modules, it induces a quasi-isomorphism of cochain complexes 
of (free) $V_A$-modules:
\begin{equation}
\label{I-Harr-symm}
\bs^2 V_A [\bsi e^1, \dots, \bsi e^d, \bsi f_1, \dots, \bsi f_d] \to  
 \bs^2 S_{V_A} (\bsi V_A \otimes_{\bbK} \coLie (\bsi V_A) )\,,
\end{equation}
where the source carries the zero differential.  

Therefore, map \eqref{VH} induces a quasi-isomorphism 
of cochain complexes  
$$
\bs^{-2}\, \cF_0 \Om^{\bul}_{\bbK} V_A ~\to~
\Gr_{\cG} \Conv^{\oplus} (\Ger^{\vee}, \End_{V_A})\,,
$$ 
where the source is considered with the zero differential.

Thus, by Lemma A.3 from \cite{notes}, map \eqref{VH} is 
a quasi-isomorphism of cochain complexes. 

Let $n \ge 2$ and
\begin{equation}
\label{X-start}
X ~\in~  \left( \End_{V_A}(n) \otimes \La^{-2} \Ger(n) \right)^{S_n} ~\subset~  \Conv^{\oplus} (\Ger^{\vee}, \End_{V_A})
\end{equation}
be a cocycle. 

Since  \eqref{VH} is a quasi-isomorphism of cochain complexes, there
exists a cocycle
\begin{equation}
\label{wt-X}
\wt{X} \in \bs^{-2}\, \cF_0 \Om^{\bul}_{\bbK} V_A
\end{equation}
such that $X$ is cohomologous to $\VH(\wt{X})$\,.

Let us observe that de Rham differential $D$ \eqref{de-Rham} satisfies the property
$$
D  \big( \cF_0 \Om^{\bul}_{\bbK} V_A  \big) \subset  \cF_1 \Om^{\bul}_{\bbK} V_A\,.
$$
Hence, since $\VH$ is injective, we conclude that  
\begin{equation}
\label{wt-X-in-cF1}
\wt{X} \in \bs^{-2}\, \cF_1 \Om^{\bul}_{\bbK} V_A\,.
\end{equation}

It is obvious that every cocycle in $\cF_1 \Om^{\bul}_{\bbK} V_A$ is 
exact in $\cF_0 \Om^{\bul}_{\bbK} V_A$. Therefore $\wt{X}$ is exact and 
so is cocycle \eqref{X-start}. 

Combining this statement with property \eqref{diff-n-to-n1} we easily 
deduce Theorem \ref{thm:rigidity}. 
\end{proof}

\subsection{The standard Gerstenhaber structure on $V_A$ is ``rigid''}
\label{app:VA-to-VA}

The first consequence of Theorem \ref{thm:rigidity} is the 
following corollary:
\begin{cor}
\label{cor:VA-VAQ}
Let $V_A$ be, as above, the algebra of polyvector fields 
on a graded affine space and $Q$ be a $\Ger_{\infty}$-structure 
on $V_A$ whose binary operations are the Schouten bracket and 
the usual multiplication. Then the identity map $\id : V_A \to V_A$
can be extended to a $\Ger_{\infty}$ morphism 
\begin{equation}
\label{cor-Ger-infty}
U_{\corr}  :  V_A \leadsto V^Q_A  
\end{equation}
from $V_A$ with the standard Gerstenhaber structure to 
$V_A$ with the $\Ger_{\infty}$-structure $Q$\,.
\end{cor}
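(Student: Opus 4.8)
The plan is to build the Taylor components of $U_{\corr}$ one arity at a time, using Tamarkin's rigidity (Theorem \ref{thm:rigidity}) to annihilate the obstructions that arise. A $\Ger_{\infty}$-morphism $V_A \leadsto V^Q_A$ with linear term $\id$ amounts to a collection of $S_n$-equivariant maps $U_n \in \big(\End_{V_A}(n) \otimes \La^{-2}\Ger(n)\big)^{S_n}$, $n \ge 1$, with $U_1 = \id$, satisfying the $\Ger_{\infty}$-morphism relations; equivalently it is a Maurer-Cartan element, with prescribed arity-$1$ part $\id$, of the convolution $\LLie$-algebra associated to the pair consisting of $V_A$ with its standard Gerstenhaber structure and $V^Q_A$. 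The crucial point is that, because $Q$ and the standard structure $\al$ \eqref{al} share the same binary operations and $V_A$ carries the zero differential, the component $U_m$ first enters the morphism relations at arity $m+1$, through the commutator $\pa U_m=[\al,U_m]$; that is, the linearization of the relations in the unknown components is governed by the differential $\pa=[\al,-]$ \eqref{diff-Conv} of $\Conv^{\oplus}(\Ger^{\vee},\End_{V_A})$ — precisely the complex to which Theorem \ref{thm:rigidity} applies.

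First I would set $U_1=\id$; because $Q$ and $\al$ carry identical binary operations, the arity-$2$ morphism relation reduces to the equality of those binary operations and so holds automatically, which starts the induction. Then I would argue inductively: assuming $U_1,\dots,U_{n-1}$ have been chosen so that all morphism relations in arities $\le n$ hold, the failure of the relation in arity $n+1$ is a single expression $\mathcal{O}\in \big(\End_{V_A}(n+1)\otimes \La^{-2}\Ger(n+1)\big)^{S_{n+1}}$, built entirely from the already-constructed $U_{<n}$ together with the higher brackets of $Q$ and $\al$. The standard homological mechanism of obstruction theory — using the Maurer-Cartan equations for $\al$ and $Q$ together with the graded Jacobi identity of $\Conv^{\oplus}(\Ger^{\vee},\End_{V_A})$ — shows that $\mathcal{O}$ is a $\pa$-cocycle. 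Since $n+1 \ge 2$, Theorem \ref{thm:rigidity} then furnishes a primitive $U_n \in \big(\End_{V_A}(n)\otimes \La^{-2}\Ger(n)\big)^{S_n}$ with $\pa U_n = -\mathcal{O}$; this is exactly the next Taylor component, and it promotes the morphism relation to arity $n+1$. Because $\Conv^{\oplus}(\Ger^{\vee},\End_{V_A})$ is complete with respect to the arity filtration \eqref{filtr-arity}, the resulting sequence $\{U_n\}$ assembles into a genuine $\Ger_{\infty}$-morphism $U_{\corr}: V_A \leadsto V^Q_A$ extending $\id$; as its linear term is the identity, it is automatically a quasi-isomorphism.

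An equivalent and perhaps more streamlined packaging of this induction is to regard $\al$ and $Q$ as two Maurer-Cartan elements of $\Conv^{\oplus}(\Ger^{\vee},\End_{V_A})$ that coincide in low arity, and to connect them by a $1$-cell of the $\infty$-groupoid $\mMC$ produced arity-by-arity via Lemma \ref{lem:isom-MC}: at each stage one feeds into Lemma \ref{lem:isom-MC} the degree $-1$ element supplied by Theorem \ref{thm:rigidity}, chosen of arity $\ge 2$ so that the linear term is never disturbed, thereby matching $\al$ to $Q$ one arity further, with the limiting gauge transformation giving the desired $\infty$-isomorphism. The main obstacle in either formulation is the same, and it is bookkeeping rather than conceptual: one must verify that $\mathcal{O}$ is genuinely $\pa$-closed and confirm the arity/degree indexing, so that the cocycle sits in arity $\ge 2$ and Theorem \ref{thm:rigidity} returns a primitive of exactly the arity needed for the next component. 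Once this is pinned down, the concentration of the cohomology of $\Conv^{\oplus}(\Ger^{\vee},\End_{V_A})$ in arity $1$ guarantees that no genuine obstruction ever survives, and the construction goes through.
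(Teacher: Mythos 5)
Your proposal is correct and follows essentially the same route as the paper's proof: both recast the problem as finding a Maurer--Cartan element of the convolution $\LLie$-algebra $\Hom(\Ger^{\vee}(V_A), V_A^Q)$ with arity-one part $\id$, proceed by induction on arity, verify that the arity-$(n+1)$ obstruction is a cocycle for $\pa = [\al,\,-\,]$ (the paper does this via the Bianchi-type identity of \cite[Lemma 4.5]{Getzler}, which is exactly the ``standard homological mechanism'' you invoke), and then kill it using Theorem \ref{thm:rigidity}. The alternative packaging via Lemma \ref{lem:isom-MC} that you sketch is not used by the paper here, but the primary argument matches.
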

\begin{proof}
To prove this statement, we consider the graded space 
\begin{equation}
\label{Hom-Ger-VA-VA}
\Hom(\Ger^{\vee}(V_A), V_A)
\end{equation}
with two different algebraic structures. 
First, \eqref{Hom-Ger-VA-VA} is identified with the convolution 
Lie algebra\footnote{In our case, Lie algebra \eqref{Conv-Ger-EndVA} carries 
the zero differential.} 
\begin{equation}
\label{Conv-Ger-EndVA}
\Conv(\Ger^{\vee}, \End_{V_A})
\end{equation}
with the Lie bracket $[~,~]$ defined in terms of the binary 
(degree zero) operation $\bullet$ from \cite[Section 4, Eq. (4.2)]{notes}. 

To introduce the second algebraic structure on  \eqref{Hom-Ger-VA-VA}, 
we recall that a $\Ger_{\infty}$-structure on $V_A$ is 
precisely a degree $1$ element
\begin{equation}
\label{Q}
Q = Q_2 + \sum_{n \ge 3} Q_n \qquad Q_n  \in \Hom_{S_n} (\Ger^{\vee}(n) \otimes V_A^{\otimes \, n}, V_A) 
\end{equation} 
in \eqref{Conv-Ger-EndVA} satisfying the  Maurer-Cartan equation
\begin{equation}
\label{MC-Q}
[Q, Q] = 0
\end{equation}
and the above condition on the binary operations is equivalent to 
the requirement 
\begin{equation}
\label{Q2-is-alpha}
Q_2 = \al\,,
\end{equation}
where $\al$ is Maurer-Cartan element \eqref{al} of \eqref{Conv-Ger-EndVA}. 

Given such a $\Ger_{\infty}$-structure $Q$ on $V_A$, we 
get the convolution $\La^{-1}\Lie_{\infty}$-algebra 
\begin{equation}
\label{map-VA-VAQ}
\Hom(\Ger^{\vee}(V_A), V_A^Q)
\end{equation}
corresponding to the pair $(V_A, V_A^Q)$, where the first entry $V_A$ 
is considered with the standard Gerstenhaber structure and the second 
entry is considered with the above $\Ger_{\infty}$-structure $Q$.

As a graded vector space, $\La^{-1}\Lie_{\infty}$-algebra 
\eqref{map-VA-VAQ} coincides with \eqref{Hom-Ger-VA-VA}.
However, it carries a non-zero differential $d_{\al}$ given by the formula
\begin{equation}
\label{diff-al}
d_{\al} (P) = - (-1)^{|P|} P \bullet \al\,, 
\end{equation}
and the corresponding (degree $1$) brackets 
$$
\{~,~, \dots, ~\}_k : S^k \big( \Hom(\Ger^{\vee}(V_A), V_A^Q) \big) \to 
\Hom(\Ger^{\vee}(V_A), V_A^Q)
$$
are defined by general formula \eqref{m-bracket}
in terms of the $\Ger^{\vee}$-coalgebra structure on 
$\Ger^{\vee}(V_A)$ and the $\Ger_{\infty}$-structure $Q$ on $V_A$. 

Let us recall \cite{DHR}, \cite{3Tales} that $\Ger_{\infty}$-morphisms 
from $V_A$ to $V_A^Q$ are in bijection with 
Maurer-Cartan elements\footnote{Recall that  Maurer-Cartan elements of a $\La^{-1}\Lie_{\infty}$-algebra
have degree $0$.}
\begin{equation}
\label{beta}
\beta = \sum_{n \ge 1} \beta_n \,, \qquad 
\beta_n \in \Hom_{S_n} (\Ger^{\vee}(n) \otimes V_A^{\otimes \, n}, V_A) 
\end{equation}
of $\La^{-1}\Lie_{\infty}$-algebra \eqref{map-VA-VAQ} such that 
$\beta_1$ corresponds to the linear term of the corresponding  
$\Ger_{\infty}$-morphism. 

Thus our goal is to prove that, for every Maurer-Cartan element $Q$ \eqref{Q}
of Lie algebra \eqref{Conv-Ger-EndVA} satisfying condition \eqref{Q2-is-alpha},
there exists a  Maurer-Cartan element $\beta$ (see \eqref{beta}) of $\La^{-1}\Lie_{\infty}$-algebra 
\eqref{map-VA-VAQ} such that 
\begin{equation}
\label{beta1-id}
\beta_1 = \id : V_A \to V_A\,.
\end{equation}

Condition \eqref{Q2-is-alpha} implies that the element
$$
\beta^{(1)} : = \id \in \Hom(\Ger^{\vee}(V_A), V_A^Q)
$$
satisfies the equation (in the $\La^{-1}\Lie_{\infty}$-algebra $\Hom(\Ger^{\vee}(V_A), V_A^Q)$)
\begin{equation}
\label{MC-beta-1} 
\Big(\, d_{\al} (\beta^{(1)})  +  \sum_{k \ge 2} 
\frac{1}{k!}  \{  \beta^{(1)}, \dots, \beta^{(1)}  \}_k \, \Big) (X) = 0 
\end{equation}
for every $X \in (\Ger^{\vee}(m) \otimes V_A^{\otimes\, m})_{S_m}$ with $m \le 2$\,.

Let us assume that we constructed (by induction) a degree zero element 
\begin{equation}
\label{beta-1n}
\beta^{(n-1)} = \id + \beta_2 + \beta_3 + \dots + \beta_{n-1} \,, \qquad 
\beta_j \in \Hom_{S_j} (\Ger^{\vee}(j) \otimes V_A^{\otimes \, j}, V_A) 
\end{equation}
such that 
\begin{equation}
\label{MC-beta-1n} 
\Big(d_{\al} (\beta^{(n-1)})  +  \sum_{k \ge 2} \frac{1}{k!}  
\{ \beta^{(n-1)}, \dots, \beta^{(n-1)} \}_{k} \Big) (X) = 0 
\end{equation}
for every $X \in (\Ger^{\vee}(m) \otimes V_A^{\otimes\, m})_{S_m}$ with $m \le n$\,.

We will try to find an element 
\begin{equation}
\label{beta-n}
\beta_n \in \Hom_{S_n} (\Ger^{\vee}(n) \otimes V_A^{\otimes \, n}, V_A)
\end{equation}
such that the sum
\begin{equation}
\label{beta-n-whole}
\beta^{(n)} : = \id + \beta_2 + \beta_3 + \dots + \beta_{n-1} + \beta_n
\end{equation}
satisfies the equation 
\begin{equation}
\label{MC-beta-n} 
\Big(d_{\al} (\beta^{(n)})  +  \sum_{k \ge 2} \frac{1}{k!}  
\{ \beta^{(n)}, \dots, \beta^{(n)} \}_{k} \Big) (X) = 0 
\end{equation}
for every $X \in (\Ger^{\vee}(m) \otimes V_A^{\otimes\, m})_{S_m}$ with $m \le n +1$\,. 

Since $\beta_n \in \Hom_{S_n} (\Ger^{\vee}(n) \otimes V_A^{\otimes \, n}, V_A)$ and 
\eqref{MC-beta-1n} is satisfied for every $X \in (\Ger^{\vee}(m) \otimes V_A^{\otimes\, m})_{S_m}$ 
with $m \le n$, equation \eqref{MC-beta-n} is also satisfied for every 
$X \in (\Ger^{\vee}(m) \otimes V_A^{\otimes\, m})_{S_m}$ with $m \le n$. 

For $X \in (\Ger^{\vee}(n+1) \otimes V_A^{\otimes\, (n+1)})_{S_{n+1}}$\,, equation \eqref{MC-beta-n}
can be rewritten as 
\begin{equation}
\label{MC-beta-n-new}
- \beta_n \bullet \al (X) + \al \bullet \beta_n (X) = - \sum_{k \ge 2} \frac{1}{k!}  
\{ \beta^{(n-1)}, \dots, \beta^{(n-1)} \}_{k} (X)\,. 
\end{equation}

Let us denote by $\ga$ the element in $\Hom_{S_{n+1}} (\Ger^{\vee}(n+1) \otimes V_A^{\otimes \, (n+1)}, V_A)$
defined as 
\begin{equation}
\label{ga-dfn}
\ga : =  \sum_{k \ge 2} \frac{1}{k!}  
\{ \beta^{(n-1)}, \dots, \beta^{(n-1)} \}_{k} \Big|_{  \Ger^{\vee}(n+1) \otimes V_A^{\otimes\, (n+1)}  }
\end{equation}

Evaluating the Bianchi type identity \cite[Lemma 4.5]{Getzler} 
\begin{multline}
\label{Bianchi}
\sum_{k \ge 2} \frac{1}{k!} d_{\al} \{ \beta^{(n-1)}, \dots, \beta^{(n-1)} \}_{k} 
+ \sum_{k \ge 1} \frac{1}{k!} \{ \beta^{(n-1)}, \dots, \beta^{(n-1)},  d_{\al} \beta^{(n-1) } \}_{k+1} \\
+ \sum_{\substack{ k \ge 2 \\  t \ge 1} } 
\frac{1}{k! t!} \{ \beta^{(n-1)}, \dots, \beta^{(n-1)}, \{ \beta^{(n-1)}, \dots, \beta^{(n-1)} \}_k \}_{t+1} = 0
\end{multline}
on an arbitrary element 
$$
Y \in (\Ger^{\vee}(n+2) \otimes V_A^{\otimes\, (n+2)})_{S_{n+2}}
$$
and using the fact that 
$$
\beta^{(n-1)} (X) = 0\,, \qquad \forall ~~ X \in (\Ger^{\vee}(m) \otimes V_A^{\otimes\, m})_{S_m}
~~\textrm{with}~~ m \ge n
$$
we deduce that element $\ga$ \eqref{ga-dfn} is a cocycle in cochain 
complex \eqref{Conv-End-VA} with differential \eqref{diff-Conv}. 

Thus Theorem \ref{thm:rigidity} implies that equation 
\eqref{MC-beta-n-new} can always be solved for $\beta_n$. 
 
This inductive argument concludes the proof of Corollary \ref{cor:VA-VAQ}. 
\end{proof}

\subsection{The Gerstenhaber algebra $V_A$ is intrinsically formal}
\label{app:int-formal}

Let $(\Cbu, \md)$ be an arbitrary cochain complex whose cohomology 
is isomorphic to $V_A$
\begin{equation}
\label{H-Cbu-VA}
H^{\bul}(\Cbu) \cong V_A\,. 
\end{equation}

Let us consider $V_A$ as the cochain complex with the 
zero differential and choose\footnote{Such a quasi-isomorphism exists
since we are dealing with cochain complexes of vector spaces over a field.} 
a quasi-isomorphism of cochain complexes 
\begin{equation}
\label{VA-to-Cbu}
I : V_A \to \Cbu\,.
\end{equation}
 
Let us assume that $\Cbu$ carries a $\Ger_{\infty}$-structure such 
that the map $I$ induces an isomorphism of Gerstenhaber algebras 
$V_A \cong H^{\bul}(\Cbu)$\,. 

Then Theorem \ref{thm:rigidity} gives us the following remarkable corollary:
\begin{cor}
\label{cor:exists!}
There exists a $\Ger_{\infty}$-morphism 
\begin{equation}
\label{Ger-VA-Cbu}
U : V_A \leadsto \Cbu
\end{equation}
whose linear term coincides with $I$ \eqref{VA-to-Cbu}. 
Moreover, any two such $\Ger_{\infty}$-morphisms
\begin{equation}
\label{two-guys}
U, ~ \wt{U} ~: ~ V_A \leadsto \Cbu
\end{equation}
are homotopy equivalent. 
\end{cor}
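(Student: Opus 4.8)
The plan is to recast both claims as statements about Maurer--Cartan elements of the convolution $\LLie$-algebra
$$
L := \Hom\big(\Ger^{\vee}(V_A),\, \Cbu\big)
$$
from Subsection \ref{app:conv}, where $V_A$ carries the standard Gerstenhaber structure and $\Cbu$ carries its given $\Ger_{\infty}$-structure. Since $\Ger^{\vee}(0) = \bfzero$, this $\LLie$-algebra is pronilpotent and complete for the arity filtration. By \cite[Proposition 3]{3Tales}, $\Ger_{\infty}$-morphisms $V_A \leadsto \Cbu$ are exactly the Maurer--Cartan elements $\beta = \sum_{n \ge 1}\beta_n$ of $L$ (with $\beta_n$ supported on arity $n$), the linear term being $\beta_1$; and by \cite[Corollary 2]{3Tales} two such morphisms are homotopic if and only if the corresponding $0$-cells of $\mMC(L)$ are isomorphic. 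Thus the first claim becomes the existence of a Maurer--Cartan element with $\beta_1 = I$, and the second becomes the statement that any two Maurer--Cartan elements sharing the linear term $I$ are joined by a $1$-cell of $\mMC(L)$.

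First I would prove existence by the arity induction already used in Corollary \ref{cor:VA-VAQ}. Set $\beta_1 := I$; since $I$ is a cochain map out of the zero-differential complex $V_A$, the arity-$1$ component of the Maurer--Cartan equation holds. Writing $\pa = \pa_{int} + \pa_{ext}$, where $\pa_{int}(f) = \md \circ f$ preserves arity and $\pa_{ext}(f) = -(-1)^{|f|} f\circ\al$ raises it by one, the arity-$N$ component of the Maurer--Cartan equation reads $\pa_{int}\beta_N = -\ga_N$, with $\ga_N$ assembled from $\beta_1,\dots,\beta_{N-1}$ through $\pa_{ext}$ and the higher brackets. A Bianchi-type identity (\cite[Lemma 4.5]{Getzler}, exactly as in the proof of Corollary \ref{cor:VA-VAQ}) shows $\ga_N$ is $\pa_{int}$-closed, hence defines a class $[\ga_N]$ in the arity-$N$ part of $H(L,\pa_{int})$; this class is the obstruction to solving for $\beta_N$.

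The heart of the argument, and the step I expect to be the main obstacle, is annihilating this obstruction. Because $I$ is a quasi-isomorphism and $V_A$ has zero differential, passing to $\pa_{int}$-cohomology identifies $H^{\bul}(\Cbu) \cong V_A$ and therefore
$$
H(L,\pa_{int}) \;\cong\; \Hom\big(\Ger^{\vee}(V_A),\, V_A\big) \;\cong\; \Conv^{\oplus}(\Ger^{\vee}, \End_{V_A}),
$$
the induced differential on this page being precisely the Harrison-type differential $[\al,-]$ of \eqref{diff-Conv}. Thus the arity filtration yields a spectral sequence with $E_1 = \Conv^{\oplus}(\Ger^{\vee}, \End_{V_A})$ and $d_1 = [\al,-]$, and Tamarkin's rigidity (Theorem \ref{thm:rigidity}) asserts exactly that $E_2$ is concentrated in arity $1$. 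Concretely, for $N \ge 2$ the cocycle $[\ga_N]$ equals $[\al, Y]$ for some arity-$(N-1)$ element $Y$; lifting $Y$ to a $\pa_{int}$-closed $\wt{Y} : (\Ger^{\vee}(N-1)\otimes V_A^{\otimes(N-1)})_{S_{N-1}} \to \Cbu$ and replacing $\beta_{N-1}$ by $\beta_{N-1} - \wt{Y}$ leaves the equations through arity $N-1$ intact while changing $[\ga_N]$ by $-[\al,Y]$, killing it; then $\ga_N$ is $\pa_{int}$-exact and $\beta_N$ exists. The delicate bookkeeping here is to verify that this adjustment does not disturb the lower stages and that its effect on $[\ga_N]$ is exactly $d_1[\wt{Y}]$, i.e. that the binary brackets with the linear term $I$ are correctly absorbed into $d_1 = [\al,-]$.

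Finally I would treat uniqueness in the same spirit. Given Maurer--Cartan elements $\beta, \wt{\beta}$ with $\beta_1 = \wt{\beta}_1 = I$, I would construct a connecting $1$-cell of $\mMC(L)$ by building its gauge parameter $\xi \in L^{-1}$ arity by arity, invoking Lemma \ref{lem:isom-MC} to improve a connection modulo $\cF_n L$ to one modulo $\cF_{n+1} L$. At each arity the residual discrepancy between the endpoints is, by \eqref{wtal-al-diff-xi}, a $[\al,-]$-cocycle on the $E_1$ page, which Theorem \ref{thm:rigidity} renders a coboundary in arity $\ge 2$; this supplies the next component of $\xi$. Since $\beta_1 = \wt{\beta}_1$, the arity-$1$ discrepancy vanishes and the induction starts, while completeness of the arity filtration guarantees convergence to a genuine $1$-cell. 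By \cite[Corollary 2]{3Tales} this exhibits the two $\Ger_{\infty}$-morphisms $U$ and $\wt{U}$ as homotopy equivalent, completing the proof.
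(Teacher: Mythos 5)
Your proposal is correct, and its uniqueness half is essentially the paper's own argument: both pass to the convolution $\LLie$-algebra $\Hom(\Ger^{\vee}(V_A),\Cbu)$ of Appendix \ref{app:conv}, build the connecting $1$-cell arity by arity via Lemma \ref{lem:isom-MC}, and at each stage push the residual discrepancy down to a $[\al,-]$-cocycle in $\Conv^{\oplus}(\Ger^{\vee},\End_{V_A})$ which Theorem \ref{thm:rigidity} renders exact. For existence, however, you take a genuinely different route. The paper first invokes the Homotopy Transfer Theorem to produce a $\Ger_{\infty}$-structure $Q$ on $V_A$ with the standard binary operations and a quasi-isomorphism $U':V_A^Q\leadsto\Cbu$ with linear term $I$, and then applies Corollary \ref{cor:VA-VAQ}; the point of that factorization is that the target $V_A^Q$ has zero differential, so each obstruction lands directly in $\Conv^{\oplus}(\Ger^{\vee},\End_{V_A})$ and Theorem \ref{thm:rigidity} solves for $\beta_n$ outright, with no need to revisit earlier stages. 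You instead run a single induction directly in $\Hom(\Ger^{\vee}(V_A),\Cbu)$, which forces you first to project the arity-$N$ obstruction to $\md$-cohomology and then to cancel its $[\al,-]$-class by correcting $\beta_{N-1}$ by a $\md$-closed lift; the ``delicate bookkeeping'' you flag is real, but it is exactly the computation the paper carries out in Eqs. \eqref{level-n2}--\eqref{key-expression} for the uniqueness half (using that the binary $\Ger_{\infty}$-operations of $\Cbu$ induce the standard Gerstenhaber operations on cohomology), so it does go through. Your version saves the appeal to HTT and the intermediate object $V_A^Q$ at the price of doing this harder computation already in the existence step. Two small points you should make explicit: at $N=2$ your correction would modify $\beta_1=I$, which is not permitted; there the obstruction class vanishes outright because $I$ is assumed to induce an isomorphism of Gerstenhaber algebras, so genuine corrections only start at $N=3$, where $Y$ sits in arity $N-1\ge 2$. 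And $H(L,\pa_{int})$ is the completed product $\prod_n\Hom_{S_n}(\Ger^{\vee}(n)\otimes V_A^{\otimes n},V_A)$ rather than the direct sum $\Conv^{\oplus}$, which is harmless here since each obstruction is concentrated in a single arity, but it is why Theorem \ref{thm:rigidity} is stated aritywise.
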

\begin{remark}
\label{rem:Hinich}
The above statement is a slight refinement of one proved 
in \cite[Section 5]{Hinich}. Following V. Hinich, we say that 
the Gerstenhaber algebra $V_A$ is intrinsically formal. 
\end{remark}~\\

\begin{proof}[~of Corollary \ref{cor:exists!}]
By the Homotopy Transfer Theorem \cite[Section 5]{DHR}, \cite[Section 10.3]{LV-book}, 
there exists a $\Ger_{\infty}$-structure $Q$ on $V_A$ and 
a $\Ger_{\infty}$-quasi-isomorphism 
\begin{equation}
\label{U-pr}
U'  : V_A^Q \leadsto \Cbu\,, 
\end{equation}
such that 
\begin{itemize}

\item the binary operations of the $\Ger_{\infty}$-structure $Q$ on $V_A$ are the 
Schouten bracket and the usual multiplication of polyvector fields,

\item the linear term of $U'$ coincides with $I$. 

\end{itemize}

Corollary \ref{cor:VA-VAQ} implies that there exists a $\Ger_{\infty}$-morphism 
\begin{equation}
\label{U-corr}
U_{\corr} : V_A \leadsto  V_A^Q\,,
\end{equation}
whose linear term is the identity map $\id: V_A \to V_A$\,. 

Hence the composition
\begin{equation}
\label{U}
U  = U'  \circ U_{\corr} :  V_A \leadsto \Cbu 
\end{equation}
is a desired $\Ger_{\infty}$-morphism.

To prove the second claim, we need the $\LLie$-algebra 
\begin{equation}
\label{Map-VA-Cbu}
\Hom(\Ger^{\vee}(V_A), \Cbu)
\end{equation} 
corresponding to the Gerstenhaber algebra $V_A$ and 
the $\Ger_{\infty}$-algebra $\Cbu$\,. The differential $\cD$ 
on \eqref{Map-VA-Cbu} is given by the formula
\begin{equation}
\label{diff-Map-VA-Cbu}
\cD (\Psi) : = \md \circ \Psi - (-1)^{|\Psi|} \Psi \circ Q_{\wedge, \{~ ,~\}}~ ,
\qquad \Psi \in \Hom(\Ger^{\vee}(V_A), \Cbu)\,,
\end{equation}
where $\md$ is the differential on $\Cbu$ and $Q_{\wedge, \{~ ,~\}}$ is 
the differential on the $\Ger^{\vee}$-coalgebra $\Ger^{\vee}(V_A)$ corresponding 
to the standard Gerstenhaber structure on $V_A$. 

The multi-brackets $\{~, ~, \dots, ~\}_m$ are defined by the general formula 
(see eq. \eqref{m-bracket}) in terms of the $\Ger^{\vee}$-coalgebra structure on 
$\Ger^{\vee}(V_A)$ and the $\Ger_{\infty}$-structure on $\Cbu$. 

Let us recall (see Appendix \ref{app:conv} for more details) 
that $\Ger_{\infty}$-morphisms from $V_A$ to $\Cbu$ are in bijection with 
 Maurer-Cartan elements of $\La^{-1}\Lie_{\infty}$-algebra \eqref{Map-VA-Cbu}
and $\Ger_{\infty}$-morphisms \eqref{two-guys} are homotopy equivalent 
if and only if the corresponding  Maurer-Cartan elements $P$ and $\wt{P}$ in \eqref{Map-VA-Cbu}
are isomorphic $0$-cells in the Deligne-Getzler-Hinich $\infty$-groupoid \cite{Getzler} 
of \eqref{Map-VA-Cbu}.  

So our goal is to prove that any two  Maurer-Cartan elements 
$P$ and $\wt{P}$ in \eqref{Map-VA-Cbu} satisfying 
\begin{equation}
\label{P-wtP-VA}
P \Big|_{V_A} = \wt{P} \Big|_{V_A} = I ~:~ V_A ~\to~ \Cbu 
\end{equation}
are isomorphic.  

Condition \eqref{P-wtP-VA} implies that 
$$
\wt{P} - P \in  \cF_2 \Hom(\Ger^{\vee}(V_A), \Cbu)\,,
$$ 
where $\cF_{\bul} \Hom(\Ger^{\vee}(V_A), \Cbu)$ is the arity 
filtration \eqref{filtr-Map-VW} on $\Hom(\Ger^{\vee}(V_A), \Cbu)$\,.

Let us assume that we constructed a sequence of 
 Maurer-Cartan elements 
\begin{equation}
\label{PPP}
P=P_2, P_3, P_4, \dots, P_{n+1}
\end{equation}
such that for every $2 \le m \le n+1$
\begin{equation}
\label{wtP-Pm}
\wt{P} - P_m \in \cF_m \Hom(\Ger^{\vee}(V_A), \Cbu)
\end{equation} 
and for every $2 \le m \le n$ there exists $1$-cell 
$$ 
P'_m(t) + d t\, \xi_{m-1}  \in  \Hom(\Ger^{\vee}(V_A), \Cbu) \hotimes \Omb(\sfDel_1)
$$
which connects $P_{m}$ to $P_{m+1}$ and such that 
\begin{equation}
\label{xi-1m-cF}
\xi_{m-1} \in \cF_{m-1}  \Hom(\Ger^{\vee}(V_A), \Cbu)\,, 
\end{equation}
and
\begin{equation}
\label{Ppr-P-cF}
P'_m(t)- P_m  \in  \cF_m \Hom(\Ger^{\vee}(V_A), \Cbu)   \hotimes \bbK[t]\,.
\end{equation}

Let us now prove that one can construct a $1$-cell 
\begin{equation}
\label{P-pr-xi-n}
P'_{n+1}(t) + d t\, \xi_n \in  \Hom(\Ger^{\vee}(V_A), \Cbu) \hotimes \Omb(\sfDel_1)
\end{equation}
such that 
$$
P'_{n+1}(t) \Big|_{t = 0} = P_{n+1}\,,
$$
\begin{equation}
\label{xi-n-cF}
\xi_n \in \cF_{n}  \Hom(\Ger^{\vee}(V_A), \Cbu)\,, 
\end{equation} 
\begin{equation}
\label{Ppr-P-cF-n1}
P'_{n+1}(t) - P_{n+1} \in  \cF_{n+1} \Hom(\Ger^{\vee}(V_A), \Cbu) \hotimes \bbK[t]\,,
\end{equation}
and the  Maurer-Cartan element
\begin{equation}
\label{P-n2-dfn}
P_{n+2} : = P'_{n+1}(t)  \Big|_{t = 1}
\end{equation}
satisfies the condition 
\begin{equation}
\label{wtP-Pn2}
\wt{P} - P_{n+2} \in \cF_{n+2} \Hom(\Ger^{\vee}(V_A), \Cbu)\,.
\end{equation} 

Let us denote the difference $\wt{P}- P_{n+1}$ by $K$. 
Since $\wt{P} - P_{n+1} \in \cF_{n+1} \Hom(\Ger^{\vee}(V_A), \Cbu)$,
\begin{equation}
\label{K}
K = \sum_{m \ge n+1} K_m\,, \qquad K_m \in \Hom_{S_m}(\Ger^{\vee}(m) \otimes V_A^{\otimes\, m}, \Cbu)\,.
\end{equation}

Subtracting the left hand side of the  Maurer-Cartan equation
\begin{equation}
\label{MC-P-n1}
\cD (P_{n+1}) + \sum_{m \ge 2} \frac{1}{m!} \{P_{n+1}, P_{n+1}, \dots, P_{n+1}\}_m   = 0
\end{equation}
from the left hand side of the  Maurer-Cartan equation
\begin{equation}
\label{MC-wt-P}
\cD (\wt{P}) + \sum_{m \ge 2} \frac{1}{m!} \{\wt{P}, \wt{P}, \dots, \wt{P}\}_m   = 0
\end{equation}
we see that element \eqref{K} satisfies the equation
\begin{equation}
\label{MC-K}
\cD(K) + \sum_{m \ge 1} \frac{1}{m!}\{P_{n+1}, \dots, P_{n+1}, K \}_{m+1} 
+ \sum_{m \ge 2} \frac{1}{m!} \{K, K, \dots, K\}^{P_{n+1}}_m   = 0\,,
\end{equation}
where the multi-bracket $ \{K, K, \dots, K\}^{P_{n+1}}_m$ is defined by 
the formula
\begin{equation}
\label{brack-twist-Pn1}
\{X_1 , X_2, \dots, X_m \}^{P_{n+1}}_m : = 
\sum_{q \ge 0} \frac{1}{q!} \{P_{n+1}, \dots, P_{n+1}, X_1 , X_2, \dots, X_m \}_{q+m}
\end{equation}

Evaluating \eqref{MC-K} on $\Ger^{\vee}(n+1) \otimes V_A^{\otimes\, (n+1)}$
and using the fact that 
\begin{equation}
\label{K-cFn1}
K \in \cF_{n+1} \Hom(\Ger^{\vee}(V_A), \Cbu)\,,
\end{equation}
we conclude that 
\begin{equation}
\label{K-n1-cocycle}
\md \circ K_{n+1} = 0\,,
\end{equation}
where $\md$ is the differential on $\Cbu$.  

Hence there exist elements
$$
K^{V_A}_{n+1} \in  \Hom_{S_{n+1}}(\Ger^{\vee}(n+1) \otimes V_A^{\otimes\, (n+1)}, V_A)
$$
and
$$
K'_{n+1} \in \Hom_{S_{n+1}}(\Ger^{\vee}(n+1) \otimes V_A^{\otimes\, (n+1)}, \Cbu)
$$
such that 
\begin{equation}
\label{K-Kpr}
K_{n+1}  = I \circ K^{V_A}_{n+1} + \md \circ  K'_{n+1}\,.
\end{equation}

Next, evaluating \eqref{MC-K} on  $Y \in \Ger^{\vee}(n+2) \otimes V_A^{\otimes\, (n+2)}$
and using inclusion \eqref{K-cFn1} again, we get the following identity 
\begin{equation}
\label{level-n2}
\md \circ K_{n+2} (Y) - K_{n+1} \circ Q_{\wedge, \{~ ,~\}} (Y) + \{P_{n+1}, K_{n+1}\}_2 (Y) = 0\,.  
\end{equation}

Unfolding $ \{P_{n+1}, K_{n+1}\}_2 (Y)$ we get 
\begin{equation}
\label{the-worst-term}
 \{P_{n+1}, K_{n+1}\}_2 (Y) =  \sum_{i=1}^{n+2} Q_{\Cbu}\Big(
(\id_{\Ger^{\vee}(2)} \otimes K_{n+1} \otimes I) \circ 
\big( \D_{\bt_i} \otimes \id^{\otimes \, (n+2)} \big) (Y) \Big)\,, 
\end{equation}
where $Q_{\Cbu}$ is the $\Ger_{\infty}$-structure on $\Cbu$, 
$\bt_i$ is the $(n+2)$-labeled planar tree shown on figure \eqref{fig:bt-i}, 
and $\D_{\bt_i}$ is the corresponding component of the comultiplication
\begin{equation}
\label{Delta-bt-i}
\D_{\bt_i} : \Ger^{\vee}(n+2) \to \Ger^{\vee}(2) \otimes \Ger^{\vee}(n+1)\,.   
\end{equation}
\begin{figure}[htp]
\centering
\begin{tikzpicture}[scale=0.5, >=stealth']
\tikzstyle{w}=[circle, draw, minimum size=3, inner sep=1]
\tikzstyle{b}=[circle, draw, fill, minimum size=3, inner sep=1]
\node [b] (l1) at (-2,6) {};
\draw (-2,6.6) node[anchor=center] {{\small $1$}};
\node [b] (l2) at (0,6) {};
\draw (0,6.6) node[anchor=center] {{\small $2$}};
\draw (2,6) node[anchor=center] {{$\dots$}};
\node [b] (l1i) at (4,6) {};
\draw (4,6.6) node[anchor=center] {{\small $i-1$}};
\node [b] (li1) at (6,6) {};
\draw (6,6.6) node[anchor=center] {{\small $i+1$}};
\draw (8,6) node[anchor=center] {{$\dots$}};
\node [b] (ln2) at (10,6) {};
\draw (10,6.6) node[anchor=center] {{\small $n+2$}};
\node [b] (li) at (10,3) {};
\draw (10,3.6) node[anchor=center] {{\small $i$}};
\node [w] (v2) at (4,3) {};
\node [w] (v1) at (7,1) {};
\node [b] (r) at (7,0) {};
\draw (v2) edge (l1);
\draw (v2) edge (l2);
\draw (v2) edge (l1i);
\draw (v2) edge (li1);
\draw (v2) edge (ln2);
\draw (v1) edge (v2);
\draw (v1) edge (li);
\draw (r) edge (v1);
\end{tikzpicture}
\caption{\label{fig:bt-i} The $(n+2)$-labeled planar tree $\bt_i$}
\end{figure}

Now using \eqref{K-Kpr} and \eqref{the-worst-term}, we rewrite \eqref{level-n2}
as follows 
\begin{multline}
\label{level-n2-unfold}
\md \circ K_{n+2} (Y)  - I \circ (K^{V_A}_{n+1} \bullet \al)(Y) \\
+ \sum_{i=1}^{n+2} Q_{\Cbu}\Big(
(\id_{\Ger^{\vee}(2)} \otimes  (\md \circ  K'_{n+1}) \otimes I) \circ 
\big( \D_{\bt_i} \otimes \id^{\otimes \, (n+2)} \big) (Y) \Big)    \\
+ \sum_{i=1}^{n+2} Q_{\Cbu}\Big(
(\id_{\Ger^{\vee}(2)} \otimes (I \circ K^{V_A}_{n+1})  \otimes I) \circ 
\big( \D_{\bt_i} \otimes \id^{\otimes \, (n+2)} \big) (Y) \Big)
 = 0\,,
\end{multline}
where $\al$ is defined in \eqref{al}.

Since the last two sums in \eqref{level-n2-unfold} involve only binary 
$\Ger_{\infty}$-operations on $\Cbu$ and these binary operations 
induce the usual multiplication and the Schouten bracket on $V_A$, 
we conclude that each term in the first sum in  \eqref{level-n2-unfold} is 
$\md$-exact and the second sum in  \eqref{level-n2-unfold} is cohomologous to 
$$
I \circ (\al \bullet K^{V_A}_{n+1}) (Y)
$$

Therefore, identity \eqref{level-n2-unfold} implies that for every  
$Y \in \Ger^{\vee}(n+2) \otimes V_A^{\otimes\, (n+2)}$ the expression 
$$
I \circ (\al \bullet K^{V_A}_{n+1} -  K^{V_A}_{n+1} \bullet \al ) (Y)
$$
is $\md$-exact. Thus 
$$
\al \bullet K^{V_A}_{n+1} -  K^{V_A}_{n+1} \bullet \al  = 0
$$
or, in other words, the element $K^{V_A}_{n+1}$ is a cocycle in complex \eqref{Conv-End-VA}
with differential \eqref{diff-Conv}. 

Hence, by Theorem \ref{thm:rigidity}, there exists a degree $-1$ element 
\begin{equation}
\label{wt-K-n}
\wt{K}^{V_A}_{n}  \in  \Hom_{S_{n}}(\Ger^{\vee}(n) \otimes V_A^{\otimes\, (n)}, V_A)
\end{equation}
such that 
\begin{equation}
\label{wt-K-KVA}
K^{V_A}_{n+1}  = [\al, \wt{K}^{V_A}_{n} ]\,.
\end{equation}

Let us now consider the degree $-1$ element 
\begin{equation}
\label{xi-n}
\xi_n =  I \circ \wt{K}^{V_A}_{n}  + K''_{n+1} \in  \cF_{n} \Hom(\Ger^{\vee}(V_A), \Cbu)\,,
\end{equation}
where $\wt{K}^{V_A}_{n}$ is element \eqref{wt-K-n} entering 
equation \eqref{wt-K-KVA} and $K''_{n+1}$ is an element in 
$$
\Hom_{S_{n+1}} \big(  \Ger^{\vee}(n+1) \otimes V_A^{\otimes\, (n+1)} , \Cbu \big)
$$
which will be determined later. 

Using $\xi_n$, we define $ P'_{n+1}(t) \in  \Hom(\Ger^{\vee}(V_A), \Cbu) \hotimes \bbK[t] $ as the 
limiting element of the recursive procedure
\begin{multline}
\label{iter-P-pr}
(P')^{(0)} : = P_{n+1}\,, \\ (P')^{(k+1)}(t) : = 
P_{n+1} + \int_{0}^t d t_1 \Big( \cD (\xi_n) + \sum_{m \ge 1} \frac{1}{m!} 
\{(P')^{(k)}(t_1), \dots, (P')^{(k)}(t_1), \xi_n \}_{m+1} \Big)\,.
\end{multline}

Since 
$$
\md \big( I \circ \wt{K}^{V_A}_{n} \big) = 0 
$$
the element $\xi_n$ satisfies the condition 
$$
\cD(\xi_n) \in \cF_{n+1} \Hom(\Ger^{\vee}(V_A), \Cbu)\,.
$$
Hence, by Lemma \ref{lem:isom-MC}, the sum 
\begin{equation}
\label{1-cell-one}
P'_{n+1}(t) + d t \xi_n \in \Hom(\Ger^{\vee}(V_A), \Cbu) \hotimes \Omb(\sfDel_1)
\end{equation}
is a $1$-cell in the $\infty$-groupoid corresponding to $\Hom(\Ger^{\vee}(V_A), \Cbu)$
satisfying \eqref{Ppr-P-cF-n1} and such that the Maurer-Cartan element $P_{n+2}$ \eqref{P-n2-dfn}
satisfies the condition 
\begin{equation}
\label{P-n2-good}
P_{n+2} - P_{n+1} - \cD(\xi_n) - \{P_{n+1}, \xi_n\}_2 \in \cF_{n+2} \Hom(\Ger^{\vee}(V_A), \Cbu)\,.
\end{equation}

Let us now show that, by choosing the element $K''_{n+1}$ in \eqref{xi-n} appropriately, 
we can get desired inclusion \eqref{wtP-Pn2}.  
 
For this purpose we unfold $\{P_{n+1}, \xi_n\}_2 (Y)$ for an arbitrary $Y \in  
\Ger^{\vee}(n+1) \otimes V_A^{\otimes\, (n+1)}$ and get 
\begin{equation}
\label{unpl-term}
\{P_{n+1}, \xi_n\}_2 (Y) = 
 \sum_{i=1}^{n+1} Q_{\Cbu}\Big(
(\id_{\Ger^{\vee}(2)} \otimes (I \circ \wt{K}^{V_A}_{n}) \otimes I) \circ 
\big( \D_{\bt'_i} \otimes \id^{\otimes \, (n+1)} \big) (Y) \Big)\,, 
\end{equation}
where $Q_{\Cbu}$ is the $\Ger_{\infty}$-structure on $\Cbu$, 
$\bt'_i$ is the $(n+1)$-labeled planar tree shown on figure \eqref{Delta-bt-pr-i}, 
and $\D_{\bt'_i}$ is the corresponding component of the comultiplication
\begin{equation}
\label{Delta-bt-pr-i}
\D_{\bt'_i} : \Ger^{\vee}(n+1) \to \Ger^{\vee}(2) \otimes \Ger^{\vee}(n)\,.   
\end{equation}
\begin{figure}[htp]
\centering
\begin{tikzpicture}[scale=0.5, >=stealth']
\tikzstyle{w}=[circle, draw, minimum size=3, inner sep=1]
\tikzstyle{b}=[circle, draw, fill, minimum size=3, inner sep=1]
\node [b] (l1) at (-2,6) {};
\draw (-2,6.6) node[anchor=center] {{\small $1$}};
\node [b] (l2) at (0,6) {};
\draw (0,6.6) node[anchor=center] {{\small $2$}};
\draw (2,6) node[anchor=center] {{$\dots$}};
\node [b] (l1i) at (4,6) {};
\draw (4,6.6) node[anchor=center] {{\small $i-1$}};
\node [b] (li1) at (6,6) {};
\draw (6,6.6) node[anchor=center] {{\small $i+1$}};
\draw (8,6) node[anchor=center] {{$\dots$}};
\node [b] (ln1) at (10,6) {};
\draw (10,6.6) node[anchor=center] {{\small $n+1$}};
\node [b] (li) at (10,3) {};
\draw (10,3.6) node[anchor=center] {{\small $i$}};
\node [w] (v2) at (4,3) {};
\node [w] (v1) at (7,1) {};
\node [b] (r) at (7,0) {};
\draw (v2) edge (l1);
\draw (v2) edge (l2);
\draw (v2) edge (l1i);
\draw (v2) edge (li1);
\draw (v2) edge (ln1);
\draw (v1) edge (v2);
\draw (v1) edge (li);
\draw (r) edge (v1);
\end{tikzpicture}
\caption{\label{fig:bt-pr-i} The $(n+1)$-labeled planar tree $\bt'_i$}
\end{figure}

Since the right hand side of \eqref{unpl-term} involves only 
binary $\Ger_{\infty}$-operations on $\Cbu$ and these binary operations 
induce the usual multiplication and the Schouten bracket on $V_A$, 
we conclude that $\{P_{n+1}, \xi_n\}_2 (Y)$ is cohomologous (in $\Cbu$) to 
$$
I \circ (\al \bullet \wt{K}^{V_A}_{n}) (Y)\,,
$$
where $\al$ is defined in \eqref{al}.

In other words, there exists an element 
\begin{equation}
\label{phi}
\phi \in  \Hom_{S_{n+1}} \big(  \Ger^{\vee}(n+1) \otimes V_A^{\otimes\, (n+1)} , \Cbu \big)
\end{equation}
such that 
$$
\{P_{n+1}, \xi_n\}_2 (Y) = I \circ (\al \bullet \wt{K}^{V_A}_{n}) (Y) + \md \circ \phi(Y). 
$$

Hence the expression  $\big( \cD(\xi_n) + \{P_{n+1}, \xi_n\}_2 \big)(Y)$ can be rewritten 
as
\begin{equation}
\label{key-expression}
\big( \cD(\xi_n) + \{P_{n+1}, \xi_n\}_2 \big)(Y) = \md \circ K''_{n+1}  (Y) + 
\md \circ \phi(Y) +  I \circ [\al, \wt{K}^{V_A}_{n}] (Y)\,. 
\end{equation}

Thus if 
$$
K''_{n+1}  =  K'_{n+1} - \phi 
$$ 
then equations \eqref{K-Kpr}, \eqref{wt-K-KVA}, and inclusion \eqref{P-n2-good} imply  
that \eqref{wtP-Pn2} holds, as desired. 

Thus we showed that one can construct an infinite sequence 
of  Maurer-Cartan elements 
$$
P=P_2, P_3, P_4, \dots
$$
and an infinite sequence of $1$-cells $(m \ge 2)$
\begin{equation}
\label{1-cell-m}
P'_m(t) + d t\, \xi_{m-1}  \in  \Hom(\Ger^{\vee}(V_A), \Cbu) \hotimes \Omb(\sfDel_1)
\end{equation}
such that for every $m \ge 2$
$$
\wt{P} - P_m \in \cF_m \Hom(\Ger^{\vee}(V_A), \Cbu)\,,
$$
the $1$-cell $P'_m(t) + d t\, \xi_{m-1}$ connects $P_{m}$ to $P_{m+1}$ 
\begin{equation}
\label{needed-1}
\xi_{m-1} \in \cF_{m-1}  \Hom(\Ger^{\vee}(V_A), \Cbu)\,, 
\end{equation}
and
\begin{equation}
\label{needed-11}
P'_m(t)- P_m  \in  \cF_m \Hom(\Ger^{\vee}(V_A), \Cbu)  \hotimes \bbK[t]\,.
\end{equation}

Since the $\LLie$-algebra  $\Hom(\Ger^{\vee}(V_A), \Cbu)$ is complete with 
respect to ``arity'' filtration \eqref{filtr-Map-VW}, inclusions \eqref{needed-1}
and \eqref{needed-11} imply that we can form the infinite 
composition\footnote{Note that the composition of $1$-cells in an infinity groupoid is 
not unique but this does not create a problem.} of all $1$-cells \eqref{1-cell-m}
and get a $1$-cell which connects the Maurer-Cartan element $P=P_2$ to 
the Maurer-Cartan element $\wt{P}$. 
 
Corollary \ref{cor:exists!} is proved.
\end{proof}

\section{On derivations of $\Cyl(\La^2 \coCom)$}
\label{app:derivations}

Let $\cC$ be a coaugmented cooperad in the category of graded vector spaces 
and $\cC_{\c}$ be the cokernel of the coaugmentation. As above, we assume that 
$\cC(0) = \bfzero$ and $\cC(1) = \bbK$. 

Following \cite[Section 3]{Action}, \cite{Fresse}, we will denote by 
$\Cyl(\cC)$ the $2$-colored dg operad
whose algebras are pairs $(V,W)$ with the data 
\begin{enumerate}
\item a $\Cobar(\cC)$-algebra structure on $V$, 

\item a $\Cobar(\cC)$-algebra  structure on $W$, and 

\item an $\infty$-morphism $F$ from $V$ to $W$, i.e. 
a homomorphism of corresponding dg $\cC$-coalgebras 
$\cC(V) \to \cC(W)$.
  
\end{enumerate}

In fact, if we forget about the differential, then $\Cyl(\cC)$ is a free operad on a certain 
$2$-colored collection $\cM(\cC)$ naturally associated to $\cC$. 

Following the conventions of Section \ref{sec:grtact}, we denote by 
\begin{equation}
\label{Der-pr-Cyl-cC}
\Der' \big(\Cyl(\cC) \big) 
\end{equation}
the dg Lie algebra of derivations $\cD$ of $\Cyl(\cC)$ subject to the 
condition
\begin{equation}
\label{prime-cond-app}
p\circ \cD = 0\,,
\end{equation}
where $p$ is the canonical projection from $\Cyl(\cC)$ onto $\cM(\cC)$. 

We have the following generalization of \eqref{holes}:
\begin{prop}
\label{prop:morph-Lie-OK}
The dg Lie algebra $\Der' \big(\Cyl( \La^2 \coCom) \big)$ does not have 
non-zero elements in degrees $\le 0$, i.e. 
$$
\Der' \big(\Cyl(\La^2 \coCom) \big)^{\, \le 0} = \bfzero\,.
$$
\end{prop}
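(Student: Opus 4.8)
The plan is to mimic the ``degree bookkeeping'' behind \eqref{holes}, now carried out for the two-colored free operad $\Cyl(\La^2\coCom)$. First I would recall that, after forgetting the differential, $\Cyl(\La^2\coCom)$ is free on the $2$-colored collection $\cM(\La^2\coCom)$, so every derivation is uniquely determined by its restriction to $\cM(\La^2\coCom)$, and condition \eqref{prime-cond-app} says precisely that this restriction lands in the \emph{decomposable} part, i.e. in tree monomials with at least two vertices. The collection $\cM(\La^2\coCom)$ splits into three families of generators: the first-colour operations $\lambda_n$ ($n\ge 2$), the second-colour operations $\rho_n$ ($n\ge 2$), and the morphism operations $\mu_n$ ($n\ge 1$) having $n$ inputs of the first colour and one output of the second colour. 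Since $\La^2\coCom(n)$ is concentrated in degree $2-2n$, the cobar conventions give $|\lambda_n|=|\rho_n|=3-2n$ (a suspension of $\La^2\coCom(n)$), while the morphism generators carry degree $|\mu_n|=2-2n$; this last value is exactly what makes an $\infty$-morphism a degree-zero Maurer--Cartan element of the convolution algebra of Appendix \ref{app:conv}. Thus it suffices to show that a derivation $\cD$ of degree $k\le 0$ kills every generator in all three families.

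For the two monochromatic families the computation is the one already underlying \eqref{holes}: a first-colour tree monomial of arity $n$ with $r$ vertices of arities $m_1,\dots,m_r$ has degree $\sum_j(3-2m_j)=r-2n+2$, using $\sum_j(m_j-1)=n-1$, so a value $\cD(\lambda_n)$ of degree $(3-2n)+k$ forces $r=k+1$; decomposability ($r\ge 2$) then forces $k\ge 1$, and symmetrically for $\rho_n$. The genuinely new ingredient is the morphism family, where $\cD(\mu_N)$ is a sum of two-coloured tree monomials of arity $N$. Here I would first analyse the admissible shapes: reading from the root (second colour) out to the leaves (first colour), each root-to-leaf path crosses exactly one morphism vertex, so the morphism vertices form an antichain, below which sits a second-colour tree and above each of which sits a first-colour forest. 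Counting input slots colour by colour I would then derive the two balance relations $\sum_j m_j+\sum_i n_i=p+N$ and $\sum_j m'_j=q+s-1$, where $p,q,s$ denote the numbers of first-colour, second-colour and morphism vertices, $N$ the number of first-colour leaves, and $m_j,m'_j,n_i$ the respective arities.

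Substituting these relations into the total degree $\sum_j(3-2m_j)+\sum_j(3-2m'_j)+\sum_i(2-2n_i)$, I expect all the summed arities to cancel and leave the clean formula $p+q-2N+2$ for the degree of any morphism-type monomial. Matching this against the required degree $(2-2N)+k$ of $\cD(\mu_N)$ gives $k=p+q$. Finally I would observe that decomposability forces $p+q\ge 1$: if $p=q=0$ the tree has only morphism vertices, but a morphism vertex has first-colour inputs and a second-colour output, so two of them cannot be stacked, and $p=q=0$ already means a single generator. Hence $k\ge 1$ once more, so all three components of $\cD$ vanish whenever $k\le 0$, proving $\Der'(\Cyl(\La^2\coCom))^{\le 0}=\bfzero$. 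The main obstacle is not any single step but the careful two-coloured tree combinatorics of the second paragraph, namely pinning down the admissible shapes and the colour-wise slot count that yield the two balance relations; once the degree formula $p+q-2N+2$ is established, the conclusion is immediate and runs exactly parallel to \eqref{holes}.
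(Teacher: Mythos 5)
Your proposal is correct and follows essentially the same route as the paper's proof: both reduce to a degree count over two-coloured tree monomials showing that the degree of the derivation equals the number of monochromatic internal vertices (your $p+q$, the paper's $k_1$), and then use decomposability together with the fact that morphism corollas cannot be stacked to force that number to be at least $1$. The only differences are presentational --- the paper handles the generators $\lambda_n$, $\rho_n$ by citing \eqref{holes} directly and lumps the two monochromatic families into a single count, whereas you re-derive that case and separate the balance relations colour by colour.
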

\begin{proof}
Let us denote by $\al$ and $\beta$, respectively, the first and the second color 
for the collection $\cM(\La^2 \coCom)$ and the operad $\Cyl(\La^2 \coCom)$.

Recall from \cite{Action} that $\Cyl(\La^2 \coCom)$ is 
generated by the collection $\cM = \cM(\La^2 \coCom)$ with
\begin{align*}
\cM(n,0;\alpha) = \susp \La^2 \coCom_\circ(n)  = \bs^{3-2n} \bbK \,, \\
\cM(0,n;\beta) = \susp\La^2 \coCom_\circ(n) =  \bs^{3-2n} \bbK \,, \\
\cM(n,0;\beta) = \La^2 \coCom(n) =  \bs^{2-2n} \bbK \,,
\end{align*}
and with all the remaining spaces being zero. Let $\cD$ be a derivation of $\Cyl(\La^2 \coCom)$ of degree $\le 0$. 

Since
$$
\Cyl \big(\La^2 \coCom \big)(n,0, \al) = \La\Lie_{\infty}(n) \qquad 
\textrm{and} \qquad 
\Cyl \big(\La^2 \coCom \big)(0,n, \beta) = \La\Lie_{\infty}(n)\,,
$$
observation \eqref{holes} implies that  
$$
\cD \Big|_{\cM(n,0; \alpha)} ~ = ~ 
\cD \Big|_{\cM(0,n; \beta)} ~ = ~ 0\,.
$$

Hence, it suffices to show that 
\begin{equation}
\label{cD-on-mixed}
\cD \Big|_{ \cM(n,0;\beta) }  ~ = ~ 0\,. 
\end{equation}

Let us denote by $ \pi_0(\Tree_k(n))$ the set of isomorphism classes 
of labeled $2$-colored planar trees corresponding to corolla $(n,0; \beta)$ 
with $k$ internal vertices. Figure \ref{fig:exam}  show two 
examples of such trees with $n=5$ leaves. The left tree has 
$k=2$ internal vertices and the right tree has $k=3$ internal 
vertices. 
\begin{figure}[htp]
\centering
\begin{minipage}[t]{0.45\linewidth} 
\centering 
\begin{tikzpicture}[scale=0.5, >=stealth']
\tikzstyle{w}=[circle, draw, minimum size=3, inner sep=1]
\tikzstyle{b}=[circle, draw, fill, minimum size=3, inner sep=1]
\node [b] (al3) at (3,6) {};
\draw (3,6.6) node[anchor=center] {{\small $3$}};
\node [b] (al1) at (5,6) {};
\draw (5, 6.6) node[anchor=center] {{\small $1$}};
\node [b] (al4) at (7,6) {};
\draw (7, 6.6) node[anchor=center] {{\small $4$}};
\node [w] (v2) at (5,4) {};
\node [b] (al5) at (3,4) {};
\draw (3,4.6) node[anchor=center] {{\small $5$}};
\node [b] (al2) at (1,4) {};
\draw (1,4.6) node[anchor=center] {{\small $2$}};
\node [w] (v1) at (3,2) {};
\node [b] (r) at (3,0) {};
\draw (v2) edge (al3);
\draw (v2) edge (al1);
\draw (v2) edge (al4);
\draw (v1) edge (al2);
\draw (v1) edge (al5);
\draw (v1) edge (v2);
\draw [dashed] (r) edge (v1);
\end{tikzpicture}
\end{minipage} ~
\begin{minipage}[t]{0.45\linewidth} 
\centering 
\begin{tikzpicture}[scale=0.5, >=stealth']
\tikzstyle{w}=[circle, draw, minimum size=3, inner sep=1]
\tikzstyle{b}=[circle, draw, fill, minimum size=3, inner sep=1]
\node [b] (al4) at (0,6) {};
\draw (0,6.6) node[anchor=center] {{\small $4$}};
\node [b] (al3) at (2,6) {};
\draw (2,6.6) node[anchor=center] {{\small $3$}};
\node [b] (al1) at (4,6) {};
\draw (4,6.6) node[anchor=center] {{\small $1$}};
\node [b] (al5) at (6,6) {};
\draw (6,6.6) node[anchor=center] {{\small $5$}};
\node [b] (al2) at (8,6) {};
\draw (8,6.6) node[anchor=center] {{\small $2$}};
\node [w] (v2) at (1,4) {};
\node [w] (v3) at (6,4) {};
\node [w] (v1) at (3.5,2) {};
\node [b] (r) at (3.5,0) {};
\draw (v2) edge (al4);
\draw (v2) edge (al3);
\draw (v3) edge (al1);
\draw (v3) edge (al5);
\draw (v3) edge (al2);
\draw [dashed] (v1) edge (v2);
\draw [dashed] (v1) edge (v3);
\draw [dashed] (r) edge (v1);
\end{tikzpicture}
\end{minipage}
\caption{\label{fig:exam} Solid edges carry the color $\al$ and 
dashed edges carry the color $\beta$; internal vertices are denoted by 
small white circles; leaves and the root vertex are denoted by small black 
circles}
\end{figure}

For a generator $X \in  \cM(n,0;\beta) = \susp^{2-2n} \bbK$, the element 
$\cD(X) \in \Cyl(\La^2\coCom)$ takes the form
\begin{equation}
\label{cD-X}
\cD(X) ~ = ~ \sum_{k \ge 2} ~
\sum_{z \in \pi_0(\Tree_k(n))} (\bt_z; X_1, ..., X_k)
\end{equation}
where $\bt_z$ is a representative of an isomorphism class $z \in \pi_0(\Tree_k(n))$ and 
$X_i$ are the corresponding elements of $\cM$. 
 
For every term in sum \eqref{cD-X}, we have $k_1$  $X_i$'s  
in $\susp \La^2 \coCom_\circ $ (call them $X_{i_a}$), and $k_2$  $X_i$'s in $\La^2\coCom$ 
(call them $X_{j_b}$). 

We obviously have that $k=k_1 + k_2$ and 
\begin{equation}
\label{deg-cD}
|\cD| = \sum_{a=1}^{k_1} |X_{i_a}|   + \sum_{b=1}^{k_2} |X_{j_b}| - |X|
\end{equation}
or equivalently 
$$
|\cD| = 2 (n-1) + \sum_{a=1}^{k_1} (3 - 2 n_{i_a}) + \sum_{b=1}^{k_2} (2 - 2n_{j_b})\,,
$$
where $n_{i_a}$ (resp. $n_{j_b}$) is the number of incoming edges of the vertex corresponding 
to $X_{i_a}$ (resp. $X_{j_b}$)\,.

On the other hand, a simple combinatorics of trees shows that 
$$
n-1 =   \sum_{a=1}^{k_1} (n_{i_a}-1) +  \sum_{b=1}^{k_2} (n_{j_b} - 1)
$$
and hence 
$$
|\cD| = k_1\,.
$$

Since $|\cD| \le 0$ the latter is possible only if $k_1=0 = |\cD|$, i.e. 
every tree in the sum $\cD(X)$ is assembled exclusively from 
mixed colored corollas.  
That would force every tree $\bt$ to have only one internal vertex which contradicts to the fact that 
the summation in \eqref{cD-X} starts at $k = 2$\,.

Therefore \eqref{cD-on-mixed} holds and the proposition follows. 
\end{proof}

~\\

\noindent\textsc{Department of Mathematics,
Temple University, \\
Wachman Hall Rm. 638\\
1805 N. Broad St.,\\
Philadelphia PA, 19122 USA \\
\emph{E-mail addresses:} {\bf vald@temple.edu} and {\bf brian.paljug@temple.edu}} 

\end{document}